\crefname{hypothesis}{Hypothesis}{Hypotheses}
\Crefname{ALC@unique}{Line}{Lines}
\colorlet{texcscolor}{blue!50!black}
\colorlet{texemcolor}{red!70!black}
\colorlet{texpreamble}{red!70!black}
\colorlet{codebackground}{black!25!white!25}
\lstdefinestyle{siamlatex}{%
	style=tcblatex,
	texcsstyle=*\color{texcscolor},
	texcsstyle=[2]\color{texemcolor},
	keywordstyle=[2]\color{texemcolor},
	moretexcs={cref,Cref,maketitle,mathcal,text,headers,email,url},
}
\DeclareTotalTCBox{\code}{ v O{} }
{ %
	fontupper=\ttfamily\color{black},
	nobeforeafter,
	tcbox raise base,
	colback=codebackground,colframe=white,
	top=0pt,bottom=0pt,left=0mm,right=0mm,
	leftrule=0pt,rightrule=0pt,toprule=0mm,bottomrule=0mm,
	boxsep=0.5mm,
	#2}{#1}
\patchcmd\newpage{\vfil}{}{}{}
\newtheorem{assumption}{Assumption}
\newtheorem{example}{Example}
\def\VV{\mathbb{V}}
\def\EE{\mathbb{E}}
\def\PP{\mathbb{P}}
\def\RR{\mathbb{R}}
\DeclareRobustCommand{\argmin}{\operatorname*{argmin}}
\def\EE{\mathbb{E}}\def\PP{\mathbb{P}}
\def\RR{\mathbb{R}}
\def\<{\langle} \def\>{\rangle}
\def\Gaussiancase{$\theta\sim\mathcal{N}(\theta_0,C)$}
\newcommand{\dthetaF}[1]{\nabla_\theta F(#1)}
\newcommand{\ddthetaF}[1]{\nabla^2_\theta F(#1)}
\newcommand{\dxiF}[1]{\nabla_\xi\tilde{F}(#1)}
\newcommand{\dxiI}[1]{\nabla_\xi\tilde{I}(#1)}
\newcommand{\ddxiF}[1]{\nabla^2_\xi\tilde{F}(#1)}
\newcommand{\halfC}{C^{\frac{1}{2}}}
\newcommand{\invA}{A^{-1}}
\newcommand{\charOmega}[1]{\mathbbm{1}_{\varOmega(z)}(#1)}
\newcommand{\Gauss}[2]{\sim\mathcal{N}(#1,#2)}
\newcommand{\xispace}{\tilde{\varOmega}}
\newcommand{\xihalfspace}{\tilde{\mathcal{H}}}	
\newcommand{\xiquaspace}{\tilde{\mathcal{Q}}}	
\newcommand{\xistar}{\xi^\star}
\newcommand{\nstar}{\hat{n}^\star}
\newcommand{\data}{PlotData/}
\newcommand{\xinorm}{\|\xi\|}
\newcommand{\xisnorm}{\|\xistar\|}
\newcommand{\eone}{e_1}
\newcommand{\eortho}{e_1^\perp}
\newcommand{\eospace}{E_1^\perp}
\newcommand{\musn}{\mu^{SN}}
\newcommand{\Hxi}{\xihalfspace_{\xistar}}
\newcommand{\thetastar}{\theta^\star}
\newcommand{\Qxi}{\xiquaspace_{\xistar}}
\newcommand{\Proj}{P_n} %
\newcommand{\halfQ}{C^{-\frac{1}{2}}}
\newcommand{\invC}{C^{-1}}
\newcommand{\numosamples}{$10^5$}
\newcommand{\cOne}{purple!10!blue!60!white}
\newcommand{\cTwo}{purple!40!blue!60!white}
\newcommand{\cThree}{purple!70!blue!60!white}
\newcommand{\cFour}{purple!100!blue!60!white}
\newcommand{\cMC}{blue!80}
\newcommand{\cMCci}{blue!50}
\newcommand{\cFit}{red!50!blue!80!black!70}
\newcommand{\cFORM}{yellow!80!red!70}
\newcommand{\cSORM}{red!90!black!70}
\newcommand{\cIS}{black!40!green!80}
\newcommand{\cISci}{black!40!green!50}
\newcommand{\vertiii}[1]{{\left\vert\kern-0.25ex\left\vert\kern-0.25ex\left\vert #1 
    \right\vert\kern-0.25ex\right\vert\kern-0.25ex\right\vert}}
  \title{Extreme event probability estimation using PDE-constrained
    optimization and large deviation theory, with application to
    tsunamis\thanks{\textcolor{blue!40!black}{Updated April 2025. Compared to the published
      version (CAMCoS 16(2), 2021), additional assumptions were needed in
      (and added to) \cref{lm:SO,thm:SO}.
      We thank Elisabeth Ullmann and Jules Pertinand for helpful discussions on this point.}
      \funding{S.~T.\ and G.~S.\ were partially supported by the US
        National Science Foundation (NSF) through grants DMS \#1723211
        and EAR \#1646337,
        and by the SciDAC program funded by the U.S.\ Department of
        Energy, Office of Science, Advanced Scientific Computing
        Research, and Biological and Environmental Research Programs.
        E.~V.-E. was supported in part by the NSF Materials Research
        Science and Engineering Center Program grant DMR \#1420073, by
        NSF grant DMS \#152276, by the Simons Collaboration on Wave
        Turbulence, grant \#617006, and by ONR grant
        \#N4551-NV-ONR.}}}
  \author{Shanyin Tong\thanks{Courant Institute, New York
      University, New York, USA (\email{shanyin.tong@nyu.edu},
      \email{eve2@cims.nyu.edu}, \email{stadler@cims.nyu.edu}).}
    \and Eric Vanden-Eijnden$^\dagger$
    \and Georg Stadler$^\dagger$}
\begin{document}

\maketitle

\begin{abstract}
 We propose and compare methods for the analysis of extreme events
in complex systems governed by PDEs that involve
random parameters, in situations where we are interested in
quantifying the probability that a scalar function of the system's
solution is above a threshold. If the threshold is large,
this probability is small and its accurate estimation is
challenging.  To tackle this difficulty, we blend theoretical
results from large deviation theory (LDT) with numerical tools from
PDE-constrained optimization. Our methods first compute parameters
that minimize the LDT-rate function over the set of parameters
leading to extreme events, using adjoint methods to compute the
gradient of this rate function. The minimizers give information
about the mechanism of the extreme events as well as estimates of
their probability. We then propose a series of methods to refine
these estimates, either via importance sampling or geometric
approximation of the extreme event sets. Results are formulated for
general parameter distributions and detailed expressions are
provided when Gaussian distributions. We give theoretical
and numerical arguments showing that the performance of our methods
is insensitive to the extremeness of the events we are interested
in.  We illustrate the application of our approach to quantify the
probability of extreme tsunami events on shore. Tsunamis are
typically caused by a sudden, unpredictable change of the ocean
floor elevation during an earthquake.  We model this change as a
random process, which takes into account the underlying physics.  We
use the one-dimensional shallow water equation to model tsunamis
numerically. In the context of this example, we present a comparison
of our methods for extreme event probability estimation, and find
which type of ocean floor elevation change leads to the largest
tsunamis on shore.
\end{abstract}
\begin{keywords}
	Extreme events, probability estimation, PDE-constrained optimization,
	large deviation theory, tsunamis.
\end{keywords}

\begin{AMS}
  65K10, %
  35Q93, %
  76B15, %
  60F10, %
  60H35 %
\end{AMS}

\section{Introduction}
Extreme events
tend to occur rarely but are often consequential when they do.
Examples from natural, social, and engineered systems include extreme
weather patterns such as hurricanes or tornadoes, pandemics, the
collapse of financial systems, cascading failures in power grids, and
structural damage in dams or bridges.  Estimating the probability of
these events and uncovering the mechanisms behind their emergence can
help inform strategies to mitigate their effects. However, given the
complexity of their dynamics, it is typically unfeasible to calculate
their probabilities explicitly. Monte Carlo methods are the standard
approach to studying complex systems that include
uncertainty. Unfortunately, these methods become inefficient to
explore the probability tails associated with extreme events.  The aim
of this paper is to design efficient methods to estimate tail
probabilities occurring in complex systems.

The methods we propose are meant to be generic and applicable to a
broad class of problems. However, in this paper we use tsunamis as our
main application example.  Tsunami waves are generated by the
displacement of a large amount of water due to a sudden and
unpredictable elevation change in the ocean floor. This change, which
occurs in conjunction with an earthquake, typically happens tens or
hundreds of kilometers away from the coast line. As the tsunami waves
travel to shore, they speed up in the deeper parts of the ocean and
slow down in the shallower parts. This nonlinear interaction with the
ocean floor combined with reflections from land features shape the
tsunami waves that eventually reach the shore.  To quantify the
flooding-induced damage in locations of interest (e.g., cities or
critical infrastructure), we use the average tsunami wave height in
regions close to those locations.  The random component in this system
is the ocean floor elevation change. Given a distribution for possible
elevation changes, we study the probability of observing extreme
tsunamis close to the locations of interest. Additionally, we explore
which type of elevation changes result in the largest tsunamis.  The
next section summarizes our approach, prior to a review of related
work in this area.

\subsection{Mathematical setup and methodological aspects}
\label{sec:mathsetup}

Following the strategy proposed
in~\cite{dematteis2018rogue,dematteis2019extreme}, we use tools from
large deviation theory (LDT) to connect probability estimation of
extreme events with optimization. We assume that the randomness of the
event under consideration can be captured by a parameter $\theta$
taking values in a Hilbert space $\varOmega$, e.g., $\varOmega=\RR^n$
or $\varOmega=L^2(\mathcal D)$ for a domain $\mathcal D\subset \RR^n$,
and whose statistics is specified by a probability measure~$\mu$.
Given a parameter-to-event map $F:\varOmega\to \RR$ such that the
larger $F(\theta)$, the rarer the event, we are interested in the
probability
\begin{equation}
\label{eq:LDT-probability}
P(z):=\mathbb{P}(F(\theta)\geq z), 
\end{equation}
when $z$ is large and hence $P(z)\ll 1$. In the applications we are
interested in, $F(\theta)$ is of the form $F(\theta)= G(u(\theta))$,
where $G$ is some functional evaluated on the solution $u$ of a
(partial) differential equation (PDE), which we will denote by
$e(u,\theta) = 0$: the parameter $\theta$ may enter this PDE for
instance as a forcing, or as boundary or initial condition, and
therefore its solution implicitly depends on $\theta$, $u=u(\theta)$.

We will show that computation of the probability
in~\eqref{eq:LDT-probability} is aided by finding the most likely
point (in the physical literature called \emph{instanton})
$\theta^\star(z)$ in the extreme event set
$\varOmega(z):=\{\theta\in \varOmega:F(\theta)\geq z\}$, i.e., the
solution of
\begin{equation}
\label{eq:LDT-instanton}
\theta^\star(z)=\argmin\limits_{\theta\in\varOmega(z)} I(\theta),
\end{equation}
where $I$ is the rate function from LDT defined in the subsequent
sections and $\theta^\star(z)$ is the global minimizer of $I$ over the
set $\varOmega(z)$, which we assume to be unique. When
$F(\theta) = G(u(\theta))$ where $u$ solves $e(u,\theta) = 0$,
\eqref{eq:LDT-instanton} has the form of a PDE-constrained
optimization problem. Under suitable assumptions on $F$ and the
distribution of $\theta$ to be detailed in \cref{sec:LDT}, the
minimum $\theta^\star(z)$ is attained on the boundary of
$\varOmega(z)$ and it can equivalently be characterized as solution of
the problem
\begin{equation}\label{eq:LDT-H}
\theta^\star(z)=\argmin_{\theta\in\varOmega} I(\theta)-\lambda F(\theta)
\end{equation}
for a specific parameter $\lambda>0$. A
variant of LDT then states that
\begin{equation}
\label{eq:LDT-principle}
\log P(z)\approx -I(\theta^\star(z)) \qquad \text{as } z\to\infty, 
\end{equation}
where ``$\approx$'' means that the ratio between the left and the
right sides goes to 1 as $z\to\infty$.  This shows that, by solving
optimization problems of the form \eqref{eq:LDT-instanton} (or
equivalently \eqref{eq:LDT-H} with appropriate $\lambda>0$), we can
estimate the log-asymptotic behavior of the probability $P(z)$ via
\eqref{eq:LDT-principle}.  The details, along with the assumptions
needed for~\eqref{eq:LDT-principle} to hold, are given in
\cref{sec:LDT}.

The next question we will address is how to get estimates of the
probability~\eqref{eq:LDT-probability} that are more accurate
than~\eqref{eq:LDT-principle}. We show that this can be done in
two ways. In \cref{sec:PE} we first propose an importance sampling
(IS) method based on the optimizers $\theta^\star(z)$ for
different~$z$. Compared to a vanilla Monte Carlo sampler, the sample
variance of this IS does not include the term
$\exp(-I(\theta^\star(z)))$. This is a significant improvement as this
term grows exponentially with the extremeness of events. This IS
method allows asymptotically exact computation of $P(z)$.

The second way to improve upon~\eqref{eq:LDT-principle} is to obtain
an estimate that holds without the logarithm in this equation. That is,
in \cref{sec:PFS}, we discuss how to find a function
$C_0:\RR \to (0,\infty)$ such that
\begin{equation}
\label{eq:PE-prefactor}
P(z)\approx C_0(z)\exp(-I(\theta^\star(z))), \qquad \text{as } z\to\infty.
\end{equation}
The function $C_0(z)\ge0$ is usually referred to as a ``prefactor''.
We will show that $C_0(z)$ can be calculated by exploiting the local
derivative information at the optimizer $\theta^\star(z)$ to construct
the second-order approximation of the extreme set boundary
$\partial\varOmega(z)$. In the engineering literature, this approach is
refereed to as Second Order Reliability Method (SORM), and in
\cref{sec:PFS} we discuss conditions under which SORM is
asymptotically exact, i.e., it leads to a prefactor $C_0(z)$ such
that~\eqref{eq:PE-prefactor} holds. Additionally, we show how low-rank
approximations can be used to compute SORM-based probabilities in high
parameter dimensions.
For completeness, in
\cref{sec:FORM} we review another approach used by
engineers, termed First Order Reliability Method (FORM), which gives
another expression for $C_0(z)$: the FORM expression for $C_0(z)$ is
simpler than that of SORM but we show that it is not
asymptotically exact in general.

As an illustration, in \cref{sec:AP,sec:results} we apply our
methodology to estimate the probability of extreme tsunami events on
shore, which are caused by random, earthquake-induced elevation
changes of the ocean floor described above.  Here, the
parameter-to-event map $F$ involves the solution of a system of
nonlinear PDEs, namely the shallow water equations. Since the random
parameter $\theta$ in this problem is high-dimensional, solving the
optimization problem \eqref{eq:LDT-instanton} is challenging. We use
an adjoint method for the efficient computation of derivatives of $F$
with respect to $\theta$ and discuss the challenges of the resulting
PDE-constrained optimization problem.

\subsection{Related literature}
Most methods for extreme event estimation are based on Monte Carlo
(MC), Markov Chain Monte Carlo (MCMC) or importance sampling (IS)
\cite{liu2008monte}. Standard MC sampling becomes impractical for
extreme events due to the large number of required samples for
unlikely events.  MCMC sampling have similar shortcomings, but
tailored variants such as Umbrella Sampling
\cite{torrie1977nonphysical} can improve the estimation of tail
probabilities. Importance sampling, \cite{kahn1953methods,
  bucklew2013introduction}, decreases the required number of samples
by using proposal distributions that reduce the variance of the
estimator. Recently proposed IS methods use ideas from Bayesian
inference to find a maximum a posterior (MAP) point and construct a
Gaussian distribution centered at that point as IS proposal
\cite{rao2020efficient, wahalbimc, sapsis2020output}. These methods
require MAP points that lie in the pre-image of certain extreme
events, and finding such events can be computationally extensive. In
particular, the authors of \cite{wahalbimc} compute a Gaussian IS
proposal by minimizing the Kullback-Leibler divergence to the ideal IS
proposal.  In \cite{rao2020efficient}, the authors propose to draw
observation pairs from Rice's formula. Both methods rely on the
linearity of the parameter-to-event maps and linearize them for
nonlinear problems.

In this paper, we follow the approach proposed
in~\cite{dematteis2019extreme} that takes the perspective of large
deviation theory \cite{dembo1998large,varadhan1984large} to estimate
extreme event probabilities in system with random components and
applies the resulting methods to quantify the probability of the
occurrence of rogue
waves~\cite{dematteis2018rogue,dematteis2019experimental}.  These
papers solve an optimization problem that finds the most important point
(also called instanton) in the extreme event set.  This present paper
uses a similar approach but generalize it in various directions, e.g.,
it provides prefactor estimators.  In a related approach, the authors
of \cite{farazmand2017variational,sapsis2018new} search for initial
condition leading to the highest growth in flow problems. This also
requires solution of an optimization problem related to LDT
optimization.

Probability estimation of extreme events is also of importance in
engineering, e.g., for assessing the structural reliability of
buildings or bridges \cite{ditlevsen1996structural}. Methods used in
this context are based on the point with largest probability density
(typically of a Gaussian distribution), combined with extreme event
set approximations called First and Second Order Reliability Methods
(FORM and SORM)
\cite{du2001most,rackwitz2001reliability,schueller1987critical}. These
methods use a truncated Taylor expansion of the parameter-to-event map
at the most probable point to estimate probabilities. Also IS methods
based on the most probable point have been proposed
\cite{kahn1953methods,schueller1987critical}.  Our approach has
similarities with these engineering methods, but uses instead the
minimizer of the rate function from LDT, which describes the
asymptotic behavior of the probability and can be used to design IS
methods~\cite{dupuis2004importance,vanden2012rare} . Since the rate
function of a Gaussian distribution is a multiple of its log-density,
our methods generalize FORM and SORM, and provide theoretical
justification for these approaches.  Moreover, our methods apply to
complicated dynamical systems (governed for instance by ODEs or PDEs)
with high-dimensional parameters as they only require derivatives that
can be computed efficiently using adjoint methods.

We use the methods we propose to estimate the probability of extreme
tsunami waves on shore after sudden earthquake-induced ocean floor
changes, which are modeled as random.  As governing equations, we use
the one-dimensional shallow water equations
\cite{leveque2008high,vreugdenhil2013numerical}, discretized with
discontinuous-Galerkin finite elements \cite{hesthaven2007nodal}. To
prevent the occurrence of shocks in these nonlinear hyperbolic
equations, we add artificial viscosity \cite{chen2013adaptive}. This
also provides justification to using the adjoint method to compute
derivatives for optimization problems governed by hyperbolic equations
\cite{giles2010convergence,ulbrich2019numerical}.

The proposed methods require the solution of optimization problems
involving complex systems that are typically governed by PDEs. While
the structure of these problems is similar to problems occurring in
optimal control and inverse problems, the extreme event perspective
suggests several novel research directions. First, it motivates to
study new classes of governing equations, e.g., hyperbolic systems and
their discretization \cite{wilcox2015discretely, hager2000runge,
  hajian2019total, giles2010convergence,
  ulbrich2019numerical}. Second, it required to study and compute
post-solution properties of minimizers, e.g., estimation of second
derivatives as in Bayesian inference
\cite{Bui-ThanhGhattasMartinEtAl13, AlexanderianPetraStadlerEtAl14} or
parametric sensitivity analysis \cite{Griesse04}. Third, as it is
typically unknown when an extreme event will occur, it motivates
further study of time-optimal control problems and their numerical
solution in complex applications \cite{Fattorini05,KunischRund15}.

\subsection{Contributions and limitations}
The main contributions of this work are as follows: (1) We present an
extreme event probability estimation framework that exploits
connections between probability estimation and PDE-constrained
optimization, and apply it to a complex example problem.  (2) We
propose approaches to refine the asymptotic probability estimates from
LDT by approximations of the extreme event sets. The computational
cost of these approximations is independent of the extremeness of the
events.  (3) We show that importance sampling leveraging the LDT
optimizers can lead to an exponential reductions of relative errors in
all parameter directions.  (4) As our tsunami application problem is
governed by the 1D shallow water equations, we derive adjoint
equations for this nonlinear hyperbolic conservation law and use them
to efficiently compute gradients of the LDT objective.

Our work also has several limitations: (1) Most of the presented
expressions for extreme event probability estimation are for an
underlying Gaussian probability distribution. Possible generalizations
depend on the probability measure and must be considered on a
case-by-case basis. However, our explicit expressions apply to
distributions that can be mapped to Gaussian distributions.  (2) The
proposed approach requires regularity properties, e.g., that the
optimization problem has a unique solution and that the rate function
of the parameter distribution is well-defined. Some properties of the
parameter-to-event map $F$ discussed in the next section can be
difficult to verify a priori, but some may be verified a posteriori.
(3) The tsunami model used in this work is one-dimensional, thus not
allowing some of the complexity of a more realistic two-dimensional
setup. However, our framework is generic and applies to more complex
problems as long as derivatives of the objective with respect to the
parameters are available.  (4) We make some simplifying choices in the
numerical scheme used for the shallow water equations, e.g., we use
uniform time steps and a global Lax-Friedrichs flux. Some of these
choices could be relaxed and while such a discussion is definitely
interesting, it is beyond the scope of this paper.

\subsection{Notation}\label{sec:notation}
Throughout the paper we repeatedly use asymptotic
estimates. Thus, we introduce the following notation, where we
consider the asymptotic parameter $s\to\infty$. Then, for $a(s), b(s)
> 0$, we introduce the notation:
\begin{subequations}\label{eq:notation}
	\begin{align}
	a(s)\approx b(s)  \quad  &\text{if} \quad \frac{a(s)}{b(s)} \to 1
	\text{ as } s\to\infty, \\
	a(s)\lesssim b(s)  \quad  &\text{if} \quad a(s)\le b(s) \text{
		for all } s, \text{ and } \frac{a(s)}{b(s)} \to 1 \text{ as } s\to \infty,\\
	a(s)\gtrsim b(s)  \quad  &\text{if} \quad a(s)\geq b(s) \text{
		for all } s, \text{ and } \frac{a(s)}{b(s)} \to 1 \text{ as } s\to \infty.
	\end{align}
\end{subequations}
We commonly use multivariate Gaussian parameters in $\RR^n$, $n\ge
1$. We say that a parameter $\theta$ follows \Gaussiancase~when
$\theta$ is a multivariate Gaussian parameter with mean $\theta_0\in
\RR^n$ and covariance matrix $C\in \RR^{n\times n}$. Here, $C$ is
assumed to be symmetric and positive definite.

We regularly use a Hilbert space $\varOmega$ and denote the
corresponding inner product by $\langle\cdot\,,\cdot\rangle$ and the
induced norm by $\|\cdot\|$. For the Euclidean inner product, we also
use the vector notation $a^\top b = \langle a,b\rangle$ whenever convenient.
Given a symmetric positive operator $Q$
on $\varOmega$, we denote the weighted inner product by
$\langle\cdot\,,\cdot\rangle_Q:=\langle \cdot\,,Q\cdot\rangle$ and the
induced norm by $\|\cdot\|_Q$.

\section{Large deviation theory and optimization}\label{sec:LDT}

Extreme event quantification aims at estimating the probability that a
certain scalar quantity, which is a function of a random parameter
$\theta$, is at or beyond a threshold.  In this section we summarize how
ideas from LDT can be used to establish a formal connection between
estimation of extreme events and optimization, loosely
following~\cite{dematteis2019extreme}. We first show how the
underlying distribution for the parameter $\theta$ defines the rate
function $I:\varOmega\mapsto \RR\cup\infty$ occurring in the optimization problem
\eqref{eq:LDT-instanton}.

For a parameter $\theta$ with probability
distribution $\mu(\theta)$, the cumulant generating function $S(\eta)$
is the logarithm of the moment generating function of $\theta$
\begin{equation}
\label{eq:LDT-S}
S(\eta)=\log\mathbb{E}e^{\langle\eta,\theta\rangle}=\log\int_{\varOmega}e^{\langle\eta,\theta\rangle}d\mu(\theta),
\end{equation}
and we define $I:\Omega\to \RR$ to be the Legendre transform of $S(\eta)$: 
\begin{equation}
\label{eq:LDT-I}
I(\theta) = \max_{\eta\in \varOmega}\left(\langle\eta,\theta\rangle-S(\eta)\right).
\end{equation}
We will be interested in problems in which $I(\theta)$ plays the role
of the large deviation rate function, as obtained from Gartner-Ellis
theorem when it applies~\cite{dembo1998large}, and will therefore
refer to it as such.  We note that $I(\theta)$ is convex by
definition, and it can be computed explicitly for some
distributions. For completeness, we derive it for multivariate
Gaussian and exponential distributions in \cref{sec:rate-examples}. In
particular, we find that the rate function $I(\theta)$ of a
multivariate Gaussian distribution is the negative log-probability
density. Next, we present the principle that allows to relate
constrained optimization over $I(\theta)$ to estimating probabilities.

\subsection{Large deviation principle} Given a parameter
$\theta\in \varOmega$ with probability measure $\mu$, and a
parameter-to-event map $F:\theta\mapsto \RR$, LDT relates the
probability $P(z)=\mathbb{P}(F(\theta)\geq z)$ and the minimizer
\eqref{eq:LDT-instanton} of the LDT rate function $I(\theta)$ in
\eqref{eq:LDT-I}. A sketch of this relation is shown in
\Cref{fig:LDTsketch}. We now provide a formal proof of the LDT result
\eqref{eq:LDT-principle}. This proof is based on the five assumptions
in \cite{dematteis2019extreme}, which we recall and generalize to
accommodate a more general class of extreme events sets $\varOmega(z)$
(see \Cref{th:as4}). Moreover, we discuss what each assumption means
for a multivariate Gaussian parameter distribution.

\begin{figure}[tb]\centering
	\begin{tikzpicture}[]
	\begin{axis}[compat=1.11, width=9cm, height=7cm,
	xmin=-5,
	xmax=4,
	ymin=-3,
	ymax=4,
	axis line style={draw=none},
	tick style={draw=none},
	yticklabels={,,},
	xticklabels={,,},
	]

	\draw [green!60!black,very thick] (-3,-0.78) -- (-3.2,-1.27);
	\draw [green!60!black,very thick] (-3,-0.78) .. controls (-2.8,-0.29) and (-2.2,-0.4) .. (-2,0);
	\filldraw [green!60!black] (-3,-0.78) circle (2pt);
	\filldraw [green!60!black] (-2,0) circle (2pt);		
	\filldraw [fill=red!8,draw=none]
	(4,-3) .. controls (2,0) and (1,-1)
	.. (0,0).. controls (-1,1) and (-0.9,2) .. (-1,4) -- (4,4)  -- (4,-3);

	\filldraw [green!60!black] (0,0) circle (2pt) node[yshift=-0.3cm,xshift=-0.1cm]{$\theta^\star(z)$};
	\draw [black, very thick, ->] (0,0)	-- (1,1) node[yshift=0.2cm,xshift=-0.4cm] {$\hat{n}^\star(z)$};
	
	\draw [green!60!black,very thick] (-2,0) .. controls (-1.7,0.6) and (-0.5,-0.5) .. (0,0);
	\filldraw [green!60!black] (0,0) circle (2pt);

			\filldraw [fill=red!8,draw=none]
	(2.5,-3) .. controls (2,0) and (1,-1)
	.. (0,0).. controls (-1,1) and (-0.9,2) .. (-1,4) -- (4,4)  -- (4,-3);
	\draw[red!60!black,very thick] (-1,4)  .. controls (-0.9,2)and (-1,1)
	.. (0,0).. controls (1,-1) and (2,0) .. (2.5,-3) node[near end, sloped, below]
	{$\partial \varOmega(z)$};
	
	\draw [green!60!black,very thick] (0,0) .. controls (1,1) and (1,-0.37) .. (2,0.45);
	\filldraw [green!60!black] (2,0.45) circle (2pt);

	\draw [green!60!black,very thick] (2,0.45)	-- (2.5,0.86) ;
	
	\draw [red!60!black,very thick, dashed] (0,-3) .. controls (0,-2) and (-2,-1.19) .. (-3,-0.78).. controls (-4,-0.37) and (-4,0) .. (-5,1) node[red!60!black,xshift=1.5cm,yshift=2cm] {level sets of $F(\theta)$};

	\draw [red!60!black,very thick, dashed] (1,-7) .. controls (4,-2) and (-1,-0.5) .. (-2,0).. controls (-3,0.5) and (-3,3) .. (-4,4);

	\draw [red!60!black,very thick,dashed] (4,-2) .. controls (3,0) and (3,-0.77) .. (2,0.45).. controls (1,1.67) and (2,2) .. (2,4);
	
	\draw[black, yshift=3.5cm,xshift=6.3cm] node {\textcolor{red!60!black}{$\varOmega(z)$}};
	\draw [blue,very thick,dotted] (-4,-2) ellipse (2 and 1.41);
	\draw [blue,very thick,dotted] (-4,-2) ellipse (3.46 and 2.45);
	\draw [blue,very thick,dotted] (-4,-2) ellipse (4.9 and 3.46) node [xshift=0.6cm, yshift=3cm] {level sets of $I(\theta)$};
	\draw [blue,very thick,dotted] (-4,-2) ellipse (6.93 and 4.9);
	\draw [blue,very thick,dotted] (-4,-2) ellipse (8.49 and 6);
	\end{axis}
	\end{tikzpicture}
	\caption{2D illustration of level sets of the rate function
		$I(\cdot)$ and the extreme event sets $\varOmega(z)$. For fixed
		$z$, $\theta^\star(z)$ is the solution to an optimization
		problem and thus the gradients $\nabla F(\theta^\star(z))$
		and $\nabla I(\theta^\star(z))$ align and after
		normalization equal to $\hat{n}^\star(z)$. The path of the
		optimizers $\thetastar(z)$ for different $z$ plays an important
		role in large deviation theory.
	}\label{fig:LDTsketch}
\end{figure}
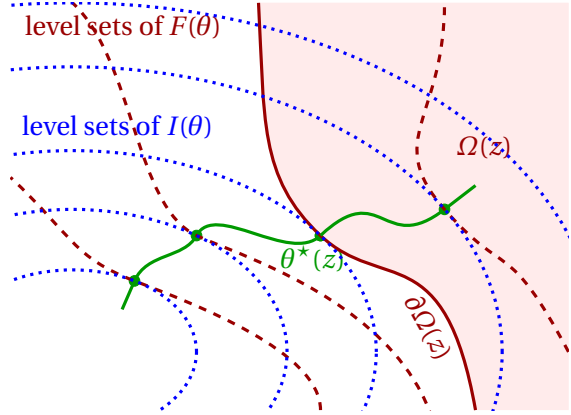

\begin{assumption}
  \label{th:as1} There exists a finite $z_0$ such that the restriction
  of the map $F$ to the preimage of the interval
  $(z_0,\infty)\subset\RR$, i.e., to the set
  $F^{-1}((z_0,\infty))\subset\varOmega$, is differentiable with
  $\|\nabla F\|\ge K >0$ for a suitable $K>0$.
\end{assumption}

\begin{assumption} 
  \label{th:as2} The probability measure $\mu$ is such that the
  cumulant generating function $S(\eta)$ \eqref{eq:LDT-S} exists for
  all $\eta\in \varOmega$ and defines a differentiable function
  $S:\varOmega \to \RR$.
\end{assumption}

For a Gaussian parameter, $S(\eta)=\eta^\top
\theta_0+\frac{1}{2}\eta^\top C\eta$ as shown in
\eqref{eq:LDT-Gaussian S}, and thus this assumption is automatically
satisfied.
As in \cite{dematteis2019extreme}, \Cref{th:as2} allows us to
introduce the tilted measure $d\mu_\eta(\theta)$, which is used in the
following assumptions:
\begin{equation}
\label{eq:LDT-tilted measure}
d\mu_\eta(\theta) = \frac{e^{\<\eta, \theta\>} d\mu(\theta)
}{\int_\mathcal{D} e^{\<\eta,  \theta\>} d\mu(\theta)} 
= e^{\<\eta, \theta\>-S(\eta)} d\mu(\theta)\,.
\end{equation}

\begin{assumption}
	\label{th:as3}
	There exists a finite $z_0$ and a constant $K$ such that,
        $\forall z\ge z_0$, the rate function $I(\theta)$ has the
        unique global minimizer $\thetastar(z)$ in the set
        $\varOmega(z)=\{\theta\in \varOmega:F(\theta)\geq z\}$. In
        addition, the map $\theta^\star:[z_0,\infty) \to \varOmega$ is
        continuously differentiable and $I(\theta^\star(\cdot))$ is
        strictly increasing with $z$ with
	\begin{equation}
	\label{eq:LDT-I assumpt}
	I(\theta^\star(z))\to \infty \quad \text{and} \quad \|\nabla
	I(\theta^\star(z))\|\ge K >0
	\quad \text{as}\quad z\to\infty.
	\end{equation}
\end{assumption}

For a Gaussian parameter,
$I(\theta)=\frac{1}{2}(\theta-\theta_0)^\top \invC(\theta-\theta_0)$,
so $I(\theta^\star(z))\to \infty$ as long as
$\|\theta^\star(z)\|\to\infty$ as $z\to\infty$. Additionally,
$\|\nabla
I(\theta^\star(z))\|=\|\invC(\theta^\star(z)-\theta_0)\|\geq\|(\theta^\star(z)-\theta_0)\|/\lambda_{\max}(\invC)\geq
K>0$ as long as
$\|(\theta^\star(z)-\theta_0)\|\geq K\lambda_{\max}(\invC)$ for
$z\geq z_0$, where $\lambda_{\max}(C)$ is the largest eigenvalue of
$C$. Thus, \Cref{th:as3} is satisfied when
$\|\theta^\star(z)\|\to\infty$ as $z\to\infty$.

Since the rate function $I$ is convex, \Cref{th:as3} implies that $\theta^\star(z) \in \partial \varOmega(z)$
for $z>z_0$, i.e., we can replace~\eqref{eq:LDT-instanton} with
\begin{equation}
\label{eq:LDT-instanton bdry}
\theta^\star(z) = \argmin_{\theta\in \partial\varOmega(z)} I(\theta)\,.
\end{equation}
The corresponding Euler-Lagrange equation is
\begin{equation}
\label{eq:LDT-EL}
\nabla I(\theta^\star(z)) = \lambda \nabla F(\theta^\star(z)),
\end{equation}
for some Lagrange multiplier $\lambda\in\RR$. Following
\cite{dematteis2019extreme}, if we define
$\eta^\star(z) := \nabla I(\theta^\star(z))$, it is easy to see that
the mean of $\mu_{\eta^\star(z)}$ is $\theta^\star(z)$. From the
Legendre transform, this implies that
$\<\eta^\star(z), \theta^\star(z)\> - S(\eta^\star(z)) =
I(\theta^\star(z))$.  Thus, we obtain an exact representation formula
for the probability $P(z)$:
\begin{equation}
\label{eq:LDT-P(z) tilted measure}
\begin{aligned}
P(z)&= \int _{\varOmega(z) } d\mu(\theta)
= \int _{\varOmega(z) } e^{S(\eta^\star(z))-
	\<\eta^\star(z),
	\theta\> } d\mu_{\eta^\star(z)}(\theta) \\
&= e^{-I(\theta^\star(z))}\int _{\varOmega(z) } e^{-
	\<\eta^\star(z), \theta-\theta^\star(z)\> }
d\mu_{\eta^\star(z)}(\theta)\,.
\end{aligned}
\end{equation}

To prove the large deviation principle \eqref{eq:LDT-principle}, we also need assumptions on $\varOmega(z)$. Differently from \cite{dematteis2019extreme}, we avoid the assumption that $\varOmega(z)$ is contained in the half-space 
\begin{equation}
\label{eq:LDT-H(z)}
\mathcal{H}(z) :=\left\lbrace  \theta : \left\langle  \hat{n}^\star(z), \theta- \theta^\star(z)\right\rangle \ge0\right\rbrace,
\end{equation}
where
$\hat n^\star (z)= \nabla F(\theta^\star(z))/\|\nabla
F(\theta^\star(z))\|=\nabla I(\theta^\star(z))/\|\nabla
I(\theta^\star(z))\|=\eta^\star(z) / \|\eta^\star(z)\|$. Instead, we
make a more general assumption.

\begin{assumption} 
	(Modified version of \cite{dematteis2019extreme}.)
	\label{th:as4}
	The set $\varOmega(z)$ satisfies 
	\begin{equation}
	\label{eq:LDT-ass4}
	\lim\limits_{z\to\infty}\dfrac{\log\left( \int _{\varOmega(z) } e^{-
			\<\eta^\star(z), \theta-\theta^\star(z)\> }
		d\mu_{\eta^\star(z)}(\theta)\right) }{I(\theta^\star(z))}\leq 0.
	\end{equation}
\end{assumption}
This assumption relaxes the condition that $\varOmega(z)$ is included
in $\mathcal{H}(z)$, and expresses that the measure of
$\varOmega(z)\backslash\mathcal{H}(z)$ must be sufficiently small.

For a Gaussian parameter \Gaussiancase, this assumption is related to the
half-space approximation discussed later in this paper. Namely,
the approximation \eqref{eq:PE-FORM Gauss} derived in \cref{sec:FORM}
implies
\begin{equation}
\label{eq:LDT-half-space}
\begin{aligned}
\int_{\mathcal{H}(z) } \!\!\!\!\!e^{-
	\<\eta^\star(z), \theta-\theta^\star(z)\> }
d\mu_{\eta^\star(z)}(\theta)
& =(2\pi)^{-n/2}\det(C)^{-1/2}\int _{\mathcal{H}(z) } \!\!\!\!\!e^{I(\theta^\star(z))-I(\theta)}d\theta\\
&\leq\frac{1}{\sqrt{4\pi I(\theta^\star(z))}}.
\end{aligned}
\end{equation} 
Thus, we only need that
\begin{equation}
\label{eq:LDT-ass4 Gaussian}
\lim\limits_{z\to\infty}\dfrac{\log\left( \frac{1}{\sqrt{4\pi I(\theta^\star(z))}}+\int _{\varOmega(z)\backslash\mathcal{H}(z) } e^{-
		\<\eta^\star(z), \theta-\theta^\star(z)\> }
	d\mu_{\eta^\star(z)}(\theta)\,\right) }{I(\theta^\star(z))}\leq 0,
\end{equation}
which means that the part of $\Omega(z)$ not contained in
$\mathcal{H}(z)$ must be sufficiently small. As further discussed in
\cref{sec:SORM} later in this paper, if the set $\varOmega(z)$ is
contained in a paraboloid centered at $\thetastar(z)$, the curvature
of that paraboloid must be in proper relation to the quadratic rate
function. For details, we refer to the proof of 
\cref{thm:SO}.

For the next assumption, which is needed for the lower bound, we first
define $G(z,s)  :=  \mu_{\eta^\star(z)}\left(\varOmega(z)\setminus \mathcal{H} (z,s)\right)$ with 
\begin{equation}
\label{eq:LDT-H(z,s)}
\mathcal{H} (z,s) := \left\{ \theta \ : \ \<\hat n^\star(z), \theta - \theta^\star(z) -
\hat n^\star(z) s\> \ge 0\right\}.
\end{equation}

\begin{assumption}
	\label{th:as5} There exists $s_1>0$ such that
	\begin{equation}\label{eq:10ass}
	\lim_{z\to\infty} \frac{\log G(z, s_1) 
	}{I(\theta^\star(z))} = 0.
	\end{equation}
\end{assumption} 
This  assumption ensures that the shape of $\varOmega(z)$
does not degenerate as $z\to\infty$.

\begin{theorem}[Large deviation principle]\label{th:1}
	Under \Cref{th:as1} -- \Cref{th:as5},  the following
	result, which is equivalent to \eqref{eq:LDT-principle}, holds.
	\begin{equation}
	\label{eq:LDT}
	\lim_{z\to\infty} \frac{\log P(z) }{I(\theta^\star(z))} =
	\lim_{z\to\infty} \frac{\log\mu(\varOmega(z)) }{I(\theta^\star(z))} =
	-1.
	\end{equation}
\end{theorem}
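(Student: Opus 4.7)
The plan is to work directly from the exact representation
\[
P(z) = e^{-I(\theta^\star(z))} \int_{\varOmega(z)} e^{-\langle \eta^\star(z),\, \theta - \theta^\star(z)\rangle}\,d\mu_{\eta^\star(z)}(\theta)
\]
derived just above the theorem statement. Since $\mu(\varOmega(z)) = P(z)$ by definition of $\varOmega(z)$, taking the logarithm and dividing by $I(\theta^\star(z))$ reduces both limits in \eqref{eq:LDT} to proving
\[
R(z) := \frac{1}{I(\theta^\star(z))}\log\!\int_{\varOmega(z)} e^{-\langle\eta^\star(z),\,\theta-\theta^\star(z)\rangle}\,d\mu_{\eta^\star(z)}(\theta) \longrightarrow 0 \quad \text{as } z\to\infty.
\]
The two one-sided limits of $R(z)$ are then handled by separate assumptions: Assumption~\ref{th:as4} gives the upper bound, Assumption~\ref{th:as5} the lower bound.

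The upper bound is immediate, since $\limsup_{z\to\infty} R(z) \le 0$ is exactly the content of \eqref{eq:LDT-ass4}, yielding $\limsup_{z\to\infty} \log P(z)/I(\theta^\star(z)) \le -1$ after substitution into the logarithm of the representation formula.

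For the lower bound, I would restrict the integral to the slab $\varOmega(z)\setminus\mathcal{H}(z,s_1)$, on which the defining inequality of $\mathcal{H}(z,s_1)$ in \eqref{eq:LDT-H(z,s)} is reversed, i.e.\ $\langle \hat n^\star(z),\theta - \theta^\star(z)\rangle < s_1$. Using the colinearity $\eta^\star(z) = \|\eta^\star(z)\|\,\hat n^\star(z)$, this gives the pointwise bound $-\langle\eta^\star(z),\theta-\theta^\star(z)\rangle > -s_1\|\eta^\star(z)\|$ on the slab, and therefore
\[
\int_{\varOmega(z)} e^{-\langle\eta^\star(z),\theta-\theta^\star(z)\rangle}\,d\mu_{\eta^\star(z)}(\theta) \;\ge\; e^{-s_1\|\eta^\star(z)\|}\,G(z,s_1).
\]
Taking logarithms and dividing by $I(\theta^\star(z))$ produces
\[
R(z) \;\ge\; -s_1\,\frac{\|\eta^\star(z)\|}{I(\theta^\star(z))} + \frac{\log G(z,s_1)}{I(\theta^\star(z))},
\]
whose second term tends to zero by Assumption~\ref{th:as5}.

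The main obstacle is to show that the remaining term also vanishes, i.e.\ that $\|\eta^\star(z)\|/I(\theta^\star(z)) \to 0$. In the Gaussian case \Gaussiancase, one has $\eta^\star(z) = C^{-1}(\theta^\star(z)-\theta_0)$ and $I(\theta^\star(z)) = \tfrac{1}{2}\|\theta^\star(z)-\theta_0\|_{C^{-1}}^{2}$, so that $\|\eta^\star(z)\|^{2} \le 2\lambda_{\max}(C^{-1})\,I(\theta^\star(z))$ and the ratio vanishes since $I(\theta^\star(z))\to\infty$ by Assumption~\ref{th:as3}. The analogous conclusion follows more generally for any convex rate function whose gradient grows sublinearly in $I$, a property that can either be verified a posteriori in the application or promoted to a standing hypothesis. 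Combining both one-sided bounds then yields $\lim_{z\to\infty} R(z) = 0$, which is equivalent to the claim \eqref{eq:LDT}.
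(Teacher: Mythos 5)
Your upper-bound argument is the same as the paper's: both sides substitute the exact representation of $P(z)$ via the tilted measure and invoke Assumption~\ref{th:as4} verbatim. The lower bound is where you genuinely diverge, and your route is cleaner. The paper disintegrates the integral along the normal direction $\hat n^\star(z)$ using Fubini, integrates by parts against the measure $\partial_s G(z,s)\,ds$, restricts the resulting one-dimensional integral to $[s_1,2s_1]$, and applies the monotonicity of $G$, arriving at
\[
P(z)\ \ge\ e^{-I(\theta^\star(z))}\,G(z,s_1)\,e^{-\|\eta^\star(z)\|s_1}\,\frac{\|\eta^\star(z)\|s_1}{1+\|\eta^\star(z)\|s_1}.
\]
You instead simply restrict the integral to the slab $\varOmega(z)\setminus\mathcal{H}(z,s_1)$, where the exponent is bounded pointwise below by $-\|\eta^\star(z)\|s_1$, giving $P(z)\ge e^{-I(\theta^\star(z))}\,G(z,s_1)\,e^{-\|\eta^\star(z)\|s_1}$ in one step. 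This bypasses the disintegration and integration-by-parts machinery, is marginally sharper (no extra factor $<1$), and avoids boundary-term technicalities in the integration by parts.

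The one thing you flag as a residual obstacle --- that $\|\eta^\star(z)\|/I(\theta^\star(z))\to 0$ is needed and is not among the stated hypotheses --- is correct, and it is worth noting that the paper's proof has exactly the same implicit dependence: the $-s_1\|\eta^\star(z)\|$ term appears in the numerator of the paper's final displayed ratio \eqref{eq:LDT-P(z)lowerbound} and is passed to the limit without justification. Assumption~\ref{th:as3} gives only a lower bound $\|\eta^\star(z)\|\ge K>0$, which controls the term $\log(1+\|\eta^\star(z)\|^{-1}s_1^{-1})$ but not $\|\eta^\star(z)\|/I(\theta^\star(z))$; for a rate function such as $I(\theta)=e^\theta-\theta-1$ that ratio tends to $1$, not $0$. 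Your observations --- that the condition is automatic in the Gaussian case, and that otherwise it should be checked a posteriori or added as an explicit hypothesis --- are the right fix; the paper silently assumes it.
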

\noindent
We note that this theorem is slightly different from a standard LDP \cite{broniatowski1995tauberian,borovkov1965multi,dembo1998large,varadhan1984large}
since it involves taking the limit of the ratio of $\log P(z)$ and
$I(\theta^\star(z))$: in contrast  a standard LDP would also establish how
$I(\theta^\star(z))$ grows as $z\to\infty$. Our result does not give
this growth explicitly, and it has to be calculated numerically via
estimation of $I(\theta^\star(z))$  for large $z$. We will explain how
to do so in \cref{sec:PE}.

\begin{proof}
	\Cref{th:as1}--\Cref{th:as3} allow us to introduce the tilted
	measure and other terms discussed above. Applying
	\Cref{th:as4} to \eqref{eq:LDT-P(z) tilted measure}, we find
	an upper bound for $P(z)$, namely
	\begin{equation}
	\label{eq:LDT-P(z) up bd}
	\begin{aligned}
	\lim\limits_{z\to\infty}\dfrac{\log P(z)}{I(\theta^\star(z))}
	=-1+\lim\limits_{z\to\infty}\dfrac{\log\left( \int _{\varOmega(z) } e^{-
			\<\eta^\star(z), \theta-\theta^\star(z)\> }
		d\mu_{\eta^\star(z)}(\theta)\right) }{I(\theta^\star(z))} \leq -1.
	\end{aligned}
	\end{equation}
	Splitting $\theta$ into the normal direction $\hat{n}^\star$
        and orthogonal directions, i.e.,
        $\theta=\theta^\star+s\hat{n}^\star+n^\top, \langle n^\top, \,
        \hat{n}^\star\rangle=0$, we have
        $\<\eta^\star(z),\theta-\thetastar(z)\>=\|\eta^\star(z)\|s$,
        using the fact that $\eta^\star(z)$ is parallel to
        $\hat{n}^\star$ from $\eta^\star$'s definition. In addition,
        $G(z,\infty)=\mu_{\eta^\star(z)}(\varOmega(z))$ and
        $G(z,-\infty)=0$, thus we can use $\partial_sG(z,s) ds$ as a
        new measure.  As in \cite{dematteis2019extreme}, applying
        Fubini's theorem to \eqref{eq:LDT-P(z) tilted measure} using
        the new measure $\partial_sG(z,s) ds$, followed by integration
        by parts, we obtain
	\begin{equation}
	\label{eq:LDT-P(z) wrt G(z,s)}
	\begin{aligned}
	P(z)&
	= e^{-I(\theta^\star(z))}\int_{-\infty}^\infty e^{-\|\eta^\star(z)\| s}
	\partial_sG(z,s) ds
	= e^{-I(\theta^\star(z))}\int_{-\infty}^\infty
	e^{-\|\eta^\star(z)\| s} \|\eta^\star(z)\| G(z,s)\,ds\\
&	\geq e^{-I(\theta^\star(z))}\int_{s_1}^{2s_1}
	e^{-\|\eta^\star(z)\| s} \|\eta^\star(z)\| G(z,s)\,ds 
		\geq e^{-I(\theta^\star(z))} G(z,s_1)\int_{s_1}^{2s_1} \,
	d(- e^{-\|\eta^\star(z)\| s} )\\
&	= e^{-I(\theta^\star(z))} G(z,s_1) e^{-\|\eta^\star(z)\| s_1} (1-e^{-\|\eta^\star(z)\| s_1})
	\geq e^{-I(\thetastar(z))}G(z,s_1) e^{-\|\eta^\star(z)\| s_1} \frac{\|\eta^\star(z)\|s_1}{1+\|\eta^\star(z)\|s_1}.
	\end{aligned}
	\end{equation}
	Applying \Cref{th:as5}, we obtain the lower bound for $P(z)$
	\begin{equation}
	\label{eq:LDT-P(z)lowerbound}
	\lim\limits_{z\to\infty}\dfrac{\log P(z)}{I(\theta^\star(z))}\geq-1+\lim\limits_{z\to\infty}\dfrac{\log G(z,s_1) 
		- \|\eta^\star(z)\| s_1 -\log(1+\|\eta^\star(z)\|^{-1}s_1^{-1})
	}{I(\theta^\star(z))}=-1.
      \end{equation}
      Combining~\eqref{eq:LDT-P(z) up bd}
      and~\eqref{eq:LDT-P(z)lowerbound} establishes~\eqref{eq:LDT}
\end{proof}

\subsection{The LDT optimization problem\label{sec:opt}}

We now discuss the optimization problem \eqref{eq:LDT-instanton},
whose solution is used in \cref{th:1}.
\Cref{th:as1} and \cref{th:as3} imply \eqref{eq:LDT-instanton bdry},
i.e., the minimizer is attained on the boundary of $\varOmega(z)$ and
thus $F(\theta^\star(z))=z$. From the Karush-Kuhn-Tucker (KKT) conditions or
the method of Lagrangian multipliers \cite{borzi2011computational},
and the regularity assumptions in \cref{th:as1}, the minimizer
$\theta^\star(z)$ of \eqref{eq:LDT-instanton bdry} satisfies
\begin{equation}
\label{eq:LDT-KKT}
\nabla I(\theta^\star(z)) = \lambda^\star(z) \nabla
F(\theta^\star(z)),\qquad F(\theta^\star(z))=z,
\end{equation}
where $\lambda^\star(z)\in\RR$ is a Lagrange multiplier.  If $F$ and
$I$ have second derivatives, then the second-order necessary
conditions are:
\begin{equation}
  \label{eq:LDT-SONC}
  \begin{aligned}
    \text{$\forall\theta\in\varOmega$ \ with\ }  \langle \nabla
    I(\theta^\star(z)),(\theta-\theta^\star(z))\rangle = 0: \\
    \left\langle\theta,\left(\nabla^2I(\theta^\star(z))-\lambda^\star(z)\nabla^2F(\theta^\star(z))\right)\theta\right\rangle\ge
    0 .
  \end{aligned}
\end{equation}
That is, the matrix
$\nabla^2I(\theta^\star(z))-\lambda^\star(z)\nabla^2F(\theta^\star(z))$
is positive semidefinite in the tangent space of the constraint. The
sufficient form of this second-order optimality condition, i.e., that the matrix is
positive definite on the tangent space will plan a role in
\cref{sec:PFS}, where we discuss approximations of extreme event
probabilities that rely on the geometry of the extreme event set, and
do not require sampling.

In practice, we are interested in solving the optimization problem
\eqref{eq:LDT-instanton bdry} for different~$z$. This can be done, for
instance, by a projected gradient or Newton descent
method. However, sometimes it is preferable to solve an
unconstrained problem instead of \eqref{eq:LDT-instanton} as discussed
next.

\subsection{Unconstrained formulation of LDT optimization problem}
\label{sec:unconstrained:min}
Here, we study when and in what sense the minimizers of the the
constrained optimization \eqref{eq:LDT-instanton} can also be
found as minimizers of the unconstrained optimization problem
\eqref{eq:LDT-H}, that is,
\begin{equation}
\label{eq:LDT-min}
\begin{aligned}
\min_{\theta\in\varOmega}\ & H(\theta) \:\text{ where }\: H(\theta):=I(\theta)-\lambda F(\theta).
\end{aligned}
\end{equation}
The function $H$ is called the Hamiltonian, e.g., in
\cite{dematteis2019extreme}.  Here, $\lambda>0$ is considered to be a given
constant. If we assume that the problem \eqref{eq:LDT-min} has a unique global
minimizer $\thetastar(\lambda)$ for every fixed $\lambda>0$, then $\thetastar(\lambda)$ is also the global minimizer of \eqref{eq:LDT-instanton bdry} with $z=F(\thetastar(\lambda))$, i.e., of
\begin{equation}
\thetastar(\lambda) =\argmin_{\theta:F(\theta)=F(\thetastar(\lambda))} I(\theta)\,.
\end{equation}
This can be seen as follows: If the minimizer $\thetastar(z)$ of
\eqref{eq:LDT-instanton bdry} with $z=F(\thetastar(\lambda))$ were not
$\thetastar(\lambda)$, from uniqueness of $\thetastar(z)$ in
\Cref{th:as3} we obtain $I(\thetastar(z))< I(\thetastar(\lambda))$
and $F(\thetastar(z))=F(\thetastar(\lambda))$, and thus $H(\thetastar(z))
< H(\thetastar(\lambda))$. This would contradict the assumption that
$\thetastar(\lambda)$ is the unique minimizer of \eqref{eq:LDT-min}.
Thus, under
this assumption, the
minimizer $\thetastar$ of the the LDT problem \eqref{eq:LDT-instanton
  bdry} can also be computed by solving the unconstrained problem
\eqref{eq:LDT-min}.

This provides us with an alternative approach to solve the LDT
optimization problem \eqref{eq:LDT-instanton} for various values
of~$z$. Namely, instead of considering a sequence of~$z$'s in
\eqref{eq:LDT-instanton}, one can consider a sequence of $\lambda$'s
in \eqref{eq:LDT-min}. The solutions $\thetastar(\lambda)$ then
correspond to the extremeness values
$z=z(\lambda):=F(\thetastar(\lambda))$.  Thus, $\lambda>0$ can be used
instead of the threshold $z$ to control the extremeness of the
event. Larger values of $F(\theta)$ correspond to extremer
events. Such events can be found by increasing $\lambda$ which puts
more emphasis on the term involving $F$.  Although the map
$\lambda\to z(\lambda)$ is implicit, solving an unconstrained problem
is often preferable to solving a constrained optimization
problem. This is also the approach we take in
\cref{sec:AP,sec:results}, where we describe our numerical example and
present corresponding results.

In problems where the evaluation of $F$ requires the
solution of a PDE, \eqref{eq:LDT-min} has the typical form
of a PDE-constrained optimization problem, with the analogy that
$I(\theta)$ is a regularization term, and $F(\theta)$
involves the governing PDE.
The existence and uniqueness of solutions for \eqref{eq:LDT-min} 
depend
on properties of
$I(\cdot)$ and $F(\cdot)$, and must be studied on a case-by-case
basis.

\section{Probability estimation using optimization and sampling}\label{sec:PE}
The solutions $\theta^\star(z)$ of \eqref{eq:LDT-min} give the leading
order contributions to the probability, i.e., the log-asymptotic
approximation of $P(z)$ from the large deviation principle
\Cref{th:1}. However, we still lack information regarding the omitted
prefactor $C_0(z)$ in \eqref{eq:PE-prefactor} since LDT only implies
$\log(C_0(z))/I(\theta^\star(z))\to 0$ as $z\to\infty$.  In this
section we explore sampling methods to approximate $C_0(z)$.

\subsection{Conventional Monte Carlo sampling}\label{sec:MC}
Although conventional Monte Carlo sampling is inefficient to study
extreme events, we first summarize its properties to compare
with other methods.
The probability $P(z)$ in \eqref{eq:LDT-probability} can be written as
the expectation of the indicator function for the set
$\varOmega(z)$.
This implies an unbiased estimate of $P(z)$, \cite{liu2008monte},
\begin{equation}
\label{eq:PE-MC estimate P^MC}
P^{MC}_N(z)=\frac{1}{N}\sum_{k=1}^{N}\charOmega{\theta_k},
\end{equation}
where the $\theta_k$'s are i.i.d.\ realizations (samples) from the distribution of $\theta$, i.e., $\theta_k\sim\mu$. 

The mean and the variance of the estimator in \eqref{eq:PE-MC estimate P^MC} are
\begin{equation}
\label{eq:PE-P^MC mean}
\EE_\mu\left[P^{MC}_N(z)\right]=
P(z), \qquad
\VV_\mu\left[P^{MC}_N(z)\right]=
\frac{1}{N}\left[P(z)-P^2(z)\right].
\end{equation}
Thus, the relative root mean square Error (RMSE) is
\begin{equation}
\label{eq:PE-P^MC rel RMSE}
e^{MC}_{N}(z)=\frac{\sqrt{\VV_\mu\left[P^{MC}_N(z)\right]}}{\EE_\mu\left[P^{MC}_N(z)\right]}
=\frac{1}{\sqrt{N}}\frac{\sqrt{P(z)-P^2(z)}}{P(z)}
\approx \frac{1}{\sqrt{N}}\dfrac{1}{\sqrt{P(z)}},
\end{equation}
where the last approximation holds for $z\to\infty$ as $P(z)\ll 1$,
i.e., for extreme events when $P^2(z)$ is dominated by $P(z)$.
Using \eqref{eq:PE-prefactor}, the relative RMSE is
\begin{equation}
\label{eq:PE-P^MC RMSE estimate}
e^{MC}_{N}(z)\approx
\frac{1}{\sqrt{N}}\frac{1}{\sqrt{C_0(z)}}\exp\left(\frac{1}{2}I(\theta^\star(z))\right),
\end{equation}
indicating an exponential term that rapidly increases the number of
samples needed.

For a Gaussian parameter distribution, this term can be computed
explicitly using results detailed in \cref{sec:FORM}. Denoting by
$\theta^\star(z)$ the solution of \eqref{eq:LDT-instanton bdry}, we have
$z=F(\theta^\star(z))$ since the minimizer $\theta^\star(z)$ lies on
the boundary of $\varOmega(z)$. Thus we can use the half-space
approximation \eqref{eq:PE-FORM Gauss} to obtain, for $z\to\infty$
that
\begin{equation}
\label{eq:PE-P(z) Gaussian half-space estimate}
P(z)\approx(2\pi)^{-\frac{1}{2}}\dfrac{\exp(-I(\theta^\star(z)))}{\sqrt{2I(\theta^\star(z))}},
\qquad \text{where} \qquad
I(\theta) = \tfrac12 (\theta-\theta_0) ^T C^{-1}(\theta-\theta_0)
\end{equation}
Hence, the relative RMSE of $P^{MC}_N(z)$ for events with $P(z)\ll 1$
becomes
\begin{equation}
\label{eq:PE-P^MC Gaussian RMSE estimate}
\begin{aligned}
e^{MC}_{N}(z)
\approx\frac{1}{\sqrt{N}}\dfrac{1}{\sqrt{P(z)}}
\approx\frac{1}{\sqrt{N}}\left[ 4\pi I(\theta^\star(z))\right] ^{\frac{1}{4}}\exp\left(\frac{1}{2}I(\theta^\star(z))\right),
\end{aligned}
\end{equation}
where compared to \eqref{eq:PE-P^MC RMSE estimate} we were able to
replace the unknown prefactor with an expression involving the
quadratic rate function $I(\theta^\star(z))$, which satisfies
$I(\theta^\star(z))\to\infty$ as $z\to\infty$ according to
\Cref{th:as3}.

\subsection{Combining Monte Carlo and LDT rate using a constant
  prefactor}\label{sec:fitting}

A simple method to estimate the prefactor $C_0(z)$ is assuming it to
be a constant $C_0$. Although standard MC sampling might not be
effective to study extreme events, it is a reasonable method for
moderately extreme events and can be combined with the rates from LDT
optimization to compute probability estimates for more extreme
events. That is, we determine a constant $C_0$ by fitting
$\exp(-I(\theta^\star(z)))$ to the MC results.  Beside making the
uncontrolled approximation that the prefactor is constant, the method
has another shortcoming: it requires MC sampling to estimate the
probability of moderately extreme events.  In practice, one needs to
choose a regime for fitting, i.e., use the MC estimate for somewhat
extreme events that still have reasonable MC accuracy. Then, LDT can
be used to provide the probability of more extreme events. This
approach was used in \cite{dematteis2019extreme,dematteis2018rogue}.

\subsection{Importance sampling for Gaussian parameters}
From \eqref{eq:PE-P^MC Gaussian RMSE estimate} and
\eqref{eq:PE-P^MC RMSE estimate}, we know that the number of samples
needed for the conventional MC method increases exponentially with
$z$, i.e., as the events become more extreme. For Gaussian parameters,
this can significantly be improved using importance sampling (IS).

For fixed $\lambda>0$, we again denote the solution of
\eqref{eq:LDT-min} by $\theta^\star$, and compute
$z:=F(\theta^\star)$.  The IS method we propose uses a Gaussian
proposal with centered at $\theta^\star$, as sketched in
\cref{fig:IS}.  By inserting $\thetastar-\thetastar$, the probability
$P(z)$ defined in \eqref{eq:LDT-probability} becomes
\begin{equation}
\label{eq:PE-P(z) IS}
\begin{aligned}
P(z)=&(2\pi)^{-n/2}\det(C)^{-1/2}\int_{\varOmega(z)}e^{-\frac{1}{2}\|\theta-\theta^\star+\theta^\star-\theta_0\|^2_{\invC}}d\theta\\
=&e^{-\frac{1}{2}\|\theta^\star-\theta_0\|^2_{\invC}}\cdot(2\pi)^{-n/2}\det(C)^{-1/2}\int_{\varOmega(z)}e^{-(\theta-\theta^\star)^\top \invC(\theta^\star-\theta_0)}e^{-\frac{1}{2}\|\theta-\theta^\star\|^2_{\invC}}d\theta\\
=&e^{-I(\theta^\star)}\EE_{\tilde{\mu}}\left[ \charOmega{\tilde{\theta}}\exp(-(\tilde{\theta}-\theta^\star)^\top \invC(\theta^\star-\theta_0)\right] ,
\end{aligned}
\end{equation}
where $\tilde{\theta}\Gauss{\theta^\star}{C}$ with probability measure
$\tilde{\mu}$.
The corresponding IS estimator is
\begin{equation}
\label{eq:PE-IS estimate P^IS Gaussian}
P^{IS}_N(z)= e^{-I(\theta^\star)}\frac{1}{N}\sum_{k=1}^{N}\left[ \charOmega{\tilde{\theta}_k}\exp(-(\tilde{\theta}_k-\theta^\star)^\top \invC(\theta^\star-\theta_0)\right],
\end{equation}
where $\tilde{\theta}_k$ are independent samples from $\mathcal{N}(\thetastar, C)$.

\begin{figure}[tb]\centering
	\begin{tikzpicture}[]
	\begin{axis}[compat=1.11, width=9cm, height=7cm,
	xmin=-5,
	xmax=4,
	ymin=-3,
	ymax=4,
	axis line style={draw=none},
	tick style={draw=none},
	yticklabels={,,},
	xticklabels={,,},
	]
	\addplot [color=blue!70, only marks, mark=*,mark size=1pt,
	opacity=0.5]
	table[x=x,y=y] {\data/B0sample.txt};
	\addplot [color=green!80!black!70,only marks, mark=*,mark
	size=1pt, opacity=0.5]
	table[x=x,y=y] {\data/Bstarsample.txt};
	\draw [blue,very thick,dotted] (-4,-2) ellipse (2 and 1.41);
	\draw [blue,very thick,dotted] (-4,-2) ellipse (3.46 and 2.45);
	\draw [blue,very thick,dotted] (-4,-2) ellipse (4.9 and 3.46);
	\draw [blue,very thick,dotted] (-4,-2) ellipse (4.9 and 3.46)
	node [xshift=0.6cm, yshift=2.1cm] {level sets of $I(\theta)$};
		\filldraw [fill=red!8,draw=none]
(2.5,-3) .. controls (2,0) and (1,-1)
.. (0,0).. controls (-1,1) and (-0.9,2) .. (-1,4) -- (4,4)  -- (4,-3);
\draw[red!60!black,very thick] (-1,4)  .. controls (-0.9,2)and (-1,1)
.. (0,0).. controls (1,-1) and (2,0) .. (2.5,-3) node[near end, sloped, below]
{$F(\theta)=z$};

	\draw [blue,very thick,dotted] (-4,-2) ellipse (6.93 and 4.9);
	\draw [blue,very thick,dotted] (-4,-2) ellipse (8.49 and 6);
	\draw[black, yshift=4.5cm,xshift=8cm] node {$\varOmega(z)$};
	\draw [green!20!black, very thick, ->] (-3.25,-1.625)	--
	(-1,-0.5) node[above,midway,sloped]{shift};
	\addplot[mark=*] coordinates {(0,0)}  node[yshift=-0.37cm,xshift=-0.35cm]{$\theta^\star(z)$};
	\addplot[mark=*] coordinates {(-4,-2)}  node[yshift=-0.37cm,xshift=-0.35cm]{$\theta_0$};
	\end{axis}
	\end{tikzpicture}
	\caption{Sketch of importance sampling method based on
		shifting the mean $\theta_0$ to the LDT-optimizer
		$\theta^\star$(z) for a specific
		$z$. Samples from the original distribution are
		shown in blue, and those used for IS
		are shown in green.  \label{fig:IS}
	}
\end{figure}
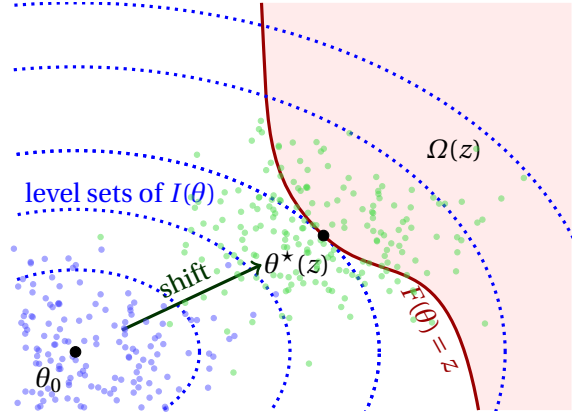

Let us now compute mean, variance and the relative RMSE of this
estimator. Using \eqref{eq:PE-P(z) IS}, the mean and the variance of
the estimator $P^{IS}_N(z)$ are given by
\begin{equation}
\label{eq:PE-IS Gauss mean}
\begin{aligned}
\EE_{\tilde{\mu}}\left[P^{IS}_N(z)\right]
&=e^{-I(\theta^\star)}\EE_{\tilde{\mu}}\left[
\charOmega{\tilde{\theta}}\exp\left(-(\tilde{\theta}-\theta^\star)^\top
\invC(\theta^\star-\theta_0)\right)\right]=P(z), \\
\VV_{\tilde{\mu}}\left[P^{IS}_N(z)\right]
&=e^{-2I(\theta^\star)}\frac{1}{N}\VV_{\tilde{\mu}}\left[ \charOmega{\tilde{\theta}}\exp(-(\tilde{\theta}-\theta^\star)^\top \invC(\theta^\star-\theta_0)\right] .
\end{aligned}
\end{equation}
Since $\theta^\star=\thetastar(z)$ is the solution of
\eqref{eq:LDT-instanton bdry}, \eqref{eq:PE-IS Gauss mean} and the
approximation \eqref{eq:PE-P(z) Gaussian half-space estimate} yields
\begin{equation}
\label{eq:PE-IS Gauss mean estimate}
\EE_{\tilde{\mu}}\left[ \charOmega{\tilde{\theta}}\exp(-(\tilde{\theta}-\theta^\star)^\top \invC(\theta^\star-\theta_0)\right]=e^{I(\theta^\star)}P(z)\approx (2\pi)^{-\frac{1}{2}}\frac{1}{\sqrt{2I(\theta^\star(z))}},
\end{equation}
where
$I(\theta^\star) = \tfrac12(\theta^\star-\theta_0)^\top
\invC(\theta^\star-\theta_0)$.  The sample variance can be estimated
as
\begin{equation}
\label{eq:PE-IS Gauss variance estimate}
\begin{aligned}
&\VV_{\tilde{\mu}}\left[ \charOmega{\tilde{\theta}}\exp(-(\tilde{\theta}-\theta^\star)^\top \invC(\theta^\star-\theta_0)\right]\\
=&\EE_{\tilde{\mu}}\left[\charOmega{\tilde{\theta}}\exp(-2(\tilde{\theta}-\theta^\star)^\top \invC(\theta^\star-\theta_0)\right] -[e^{I(\theta^\star)}P(z)]^2\\
\approx&(2\pi)^{-\frac{1}{2}}\frac{1}{2\sqrt{2I(\theta^\star(z))}}-\left[ (2\pi)^{-\frac{1}{2}}\frac{1}{\sqrt{2I(\theta^\star(z))}}\right]^2
\lesssim(2\pi)^{-\frac{1}{2}}\frac{1}{2\sqrt{2I(\theta^\star(z))}},
\end{aligned}
\end{equation}
where the last estimate holds for $z\to\infty$. Hence, the relative
RMSE  is
\begin{equation}
\label{eq:PE-IS Gauss rel RMSE}
\begin{aligned}
  e^{IS}_N(z)=\dfrac{\sqrt{\VV_{\tilde{\mu}}\left[P^{IS}_N(z)\right]}}
  {\EE_{\tilde{\mu}}\left[P^{IS}_N(z)\right]}
  \approx\frac{1}{\sqrt{N}}\dfrac{\sqrt{(2\pi)^{-\frac{1}{2}}
      \frac{1}{2\sqrt{2I(\theta^\star(z))}}}}{(2\pi)^{-\frac{1}{2}}
    \frac{1}{\sqrt{2I(\theta^\star(z))}}}=\frac{1}{\sqrt{N}}[\pi
  I(\theta^\star(z))]^{\frac{1}{4}}.
\end{aligned}
\end{equation}
Thus, compared to \eqref{eq:PE-P^MC Gaussian RMSE estimate}, we
removed the exponential term of \eqref{eq:PE-P^MC Gaussian RMSE
  estimate} by using importance sampling with samples from
$\mathcal{N}(\theta^\star(z),C)$. This sampling error reduction holds
for all directions. This IS method uses the covariance of the original
distribution in the proposal distribution. Since we know the density
decreases faster in the direction of $\nabla I(\thetastar)$, one may
be able to modify the covariance matrix in this direction in order to
decrease the variance of IS estimator, similar as in the IS method
proposed in \cite{wahalbimc}. Generalizations of the presented
approach to non-Gaussian distributions could rely on approximate
mappings of the parameter distribution to a Gaussian distribution, or
on Gaussian approximations of the distribution about an LDT optimizer.

\section{Probability estimation using second-order approximation of extreme event set}\label{sec:PFS}

Since $P(z)=\mu(\varOmega(z))$, this probability can be computed by
integrating the measure $\mu$ over the set $\varOmega(z)$,
provided we know or can approximate this set. Since evaluation of
$F(\cdot)$ requires the solution of a PDE,
$\varOmega(z)=\{\theta: F(\theta)\geq z\}$ typically cannot be
computed explicitly. However, we can construct an approximation of
$\varOmega(z)$ based on properties of the solution $\theta^\star$ of
\eqref{eq:LDT-min}, and integrate over this approximating set. For
certain distributions, e.g., multivariate Gaussian distributions, this
results in a computationally feasible method. In this section, we
discuss the approximation of $P(z)$ through integration over a
second-order approximation of $\varOmega(z)$, and provide explicit
expressions for multivariate Gaussian parameters. For completeness, we
present corresponding results based on a first-order approximation of
$\varOmega$ in \cref{sec:FORM}. While this first-order approximation
is easier to compute, it is not asymptotically exact in the sense of
\eqref{eq:PE-prefactor}.

For the remainder of this section we consider a Gaussian parameter
distribution \Gaussiancase.  In this case, the LDT minimizer
$\theta^\star$ is also the most probable point, since
$\exp(-I(\theta))$ is the density of the Gaussian distribution up to a
normalization constant; see \cref{ex:gaussian}.  As will be shown in
\cref{sec:SORM}, one can derive explicit approximations of $P(z)$
using approximations of the extreme event set. As preparation step, we
show how to transform the general Gaussian case to a standard normal
distribution $\mathcal{N}(0,I)$.  We also detail how the extreme event
set, the rate function, and the parameter-to-event map are modified
under this transformation.

Although all results in this section are presented in finite
dimensions, we believe that they can be generalized to infinite
dimensions, i.e., Gaussian random fields.  In particular, if the
expressions for the probabilities we find in \cref{thm:SO} converge as
$n\to\infty$, they correspond to probabilities defined over an
infinite-dimensional parameter space. In many cases, such a
convergence follows from properties of the covariance operator of a
Gaussian random field. However, a rigorous discussion of
infinite-dimensional parameter spaces is beyond the scope this present
paper.

We use the optimizer $\theta^\star$ obtained by solving
\eqref{eq:LDT-min} for a fixed $\lambda>0$. The corresponding event
value is $z=F(\theta^\star)$ as discussed in \cref{sec:opt}, i.e.,
$\thetastar = \thetastar(z)$. For simplicity of the notation, we drop
the dependence of $\thetastar$ on $z$ (and $\lambda$) in the
subsequent derivations.
\begin{figure}[tb]\centering
	\begin{tikzpicture}[]
	\begin{axis}[compat=1.11, width=9cm, height=7cm,
	xmin=-5,
	xmax=4,
	ymin=-3,
	ymax=4,
	axis line style={draw=none},
	tick style={draw=none},
	yticklabels={,,},
	xticklabels={,,},
	]
	
	\draw [blue,very thick,dotted] (-4,-2) ellipse (2 and 1.41);
	\draw [blue,very thick,dotted] (-4,-2) ellipse (3.46 and 2.45);
	\draw [blue,very thick,dotted] (-4,-2) ellipse (4.9 and 3.46);
	\draw [blue,very thick,dotted] (-4,-2) ellipse (4.9 and 3.46) node [xshift=0.6cm, yshift=2.1cm] {level sets of $I(\theta)$};
	
		\filldraw [fill=red!8,draw=none]
(2.5,-3) .. controls (2,0) and (1,-1)
.. (0,0).. controls (-1,1) and (-0.9,2) .. (-1,4) -- (4,4)  -- (4,-3);
		\draw[red!60!black,very thick] (-1,4)  .. controls (-0.9,2)and (-1,1)
	.. (0,0).. controls (1,-1) and (2,0) .. (2.5,-3) node[near end, sloped, below]
	{$F(\theta)=z$};

	\draw [blue,very thick,dotted] (-4,-2) ellipse (6.93 and 4.9);
	\draw [blue,very thick,dotted] (-4,-2) ellipse (8.49 and 6);

	\draw [yellow!70!red, very thick] (3,-3) -- (-4,4) node[yshift=-0.6cm,xshift=1.6cm] {\makecell[c]{\textbf{first-order} \\  \textbf{approx.}}};

	\draw [red, line width=0.7mm] 	(4,-1) .. controls (3.95,-0.95) and (1,-1) .. (0,0).. controls (-1,1) and (-0.95,3.95) .. (-1,4) node[yshift=-1cm,xshift=1.5cm] {\makecell[c]{\textbf{second-order} \\  \textbf{approx.}}};
	
	\draw[black, yshift=3cm,xshift=6.3cm] node {\textcolor{red!60!black}{$\varOmega(z)$}}; 		
	
	\filldraw [black] (0,0) circle (2pt) node[yshift=-0.5cm,xshift=0cm]{$\theta^\star$};
	\draw [black, very thick, ->] (0,0)	-- (1,1) node[yshift=-0.3cm,xshift=0.3cm] {$\nstar$};

	\end{axis}
	\end{tikzpicture}
	\caption{2D illustration of the second-order
		approximation of the set $\varOmega(z)$ for given
		$z$. These approximations exploit properties of the
		minimizer $\theta^\star$, the normal direction
		$\nstar:=\nabla_\theta F(\theta^\star)/
		\|\nabla_\theta F(\theta^\star)\|=\nabla_\theta
		I(\theta^\star)/ \|\nabla_\theta I(\theta^\star)\|$
		and the curvature
		of $\partial\varOmega(z)$
		at $\theta^\star$. The first-order approximation is also given, its details are discussed in \cref{sec:FORM}.}\label{fig:FORM-SORM}
\end{figure}
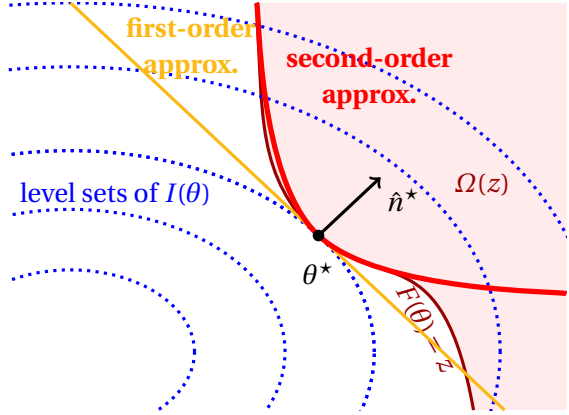
We first define the affine transformation 
\begin{equation}
\label{eq:PE-affine transfer}
\theta=A\xi+\theta_0, \qquad A:=\halfC R,
\end{equation}
where $R$ is a rotation matrix %
such that $R^\top \halfQ(\theta^\star-\theta_0)$ is parallel to the
first unit vector, i.e., only the first component of
$R^\top \halfQ(\theta^\star-\theta_0)$ is nonzero and positive. The
affine transformation \eqref{eq:PE-affine transfer} maps the standard
normal variable $\xi\Gauss{0}{I_n} \text{~with measure~} \musn$ to a
Gaussian variable \Gaussiancase.  Under this transformation, the rate
function and parameter-to-event map become
\begin{equation}
\label{eq:PE-F xi}
\tilde{F}(\xi):=F(\theta)=F(A\xi+\theta_0),\qquad \tilde{I}(\xi):=I(\theta)=I(A\xi+\theta_0)
= \frac{1}{2}\|\xi\|^2.
\end{equation}
The extreme event set $\varOmega(z)$ is mapped to
$\xispace(z)=\{\xi: \ \tilde{F}(\xi)\geq z\}$ and the derivatives
become
\begin{equation}
\label{eq:PE-dF xi}
\begin{array}{ll}
\nabla_{\xi}\tilde{F}(\xi)= A^\top\nabla_\theta F(\theta),&
\nabla^2_{\xi}\tilde{F}(\xi)=A^\top\nabla^2_\theta F(\theta)A,\\
\nabla_{\xi}\tilde{I}(\xi)=A^\top\nabla_\theta I(\theta)=\xi,&
\nabla^2_{\xi}\tilde{I}(\xi)=A^\top\nabla^2_\theta I(\theta)A=I_n.
\end{array}
\end{equation}
The optimizer in the transformed system is
$\xi^\star=\invA(\theta^\star-\theta_0)$ and due to the definition
\eqref{eq:PE-affine transfer}, only the first component of $\xi^\star$
is nonzero
and positive. The following Euler-Lagrange equation holds for
the transformed functions $\tilde{F}$ and $\tilde{I}$:
\begin{equation}
\label{eq:PE-E-L xi}
\dxiI{\xi^\star}=\lambda\dxiF{\xi^\star}.
\end{equation}
By construction, the normal direction at the optimal point $\xistar$ is :
\begin{equation}
\label{eq:PE-n xi}
\dfrac{\nabla_{\xi}\tilde{F}(\xi^\star)}{\|\nabla_{\xi}\tilde{F}(\xi^\star)\|}=\dfrac{\nabla_{\xi}\tilde{I}(\xi^\star)}{\|\nabla_{\xi}\tilde{I}(\xi^\star)\|}=\dfrac{\xi^\star}{\|\xi^\star\|}=\eone,
\end{equation} 
where $\eone$ the first unit vector.
Finally, we introduce
$\Proj:=[\mathbf{0},I_{n-1}]\in\RR^{(n-1)\times n}$, where
$\mathbf{0}\in\RR^{n-1}$ is the zero vector. This matrix represents a
projection onto
$\eospace:=\{\eortho:\langle\eospace,\eone\rangle=0\}=\{[0,\zeta],\zeta\in\RR^{n-1}\}$,
the hyperplane orthogonal to $\eone$.  Clearly,
$\Proj(\eospace) = \RR^{n-1}$ and every vector $\xi$ in $\RR^n$ can be
split uniquely as $\xi = [0,\zeta] + \xi_1e_1$, where
$\zeta=\Proj(\xi)$.

\subsection{Second-Order approximation of $\varOmega(z)$}\label{sec:SORM}

To approximate $\varOmega(z)$, one can use a second-order
approximation of $\partial\varOmega(z)$. This is similar to the
second-order reliability method (SORM) for Gaussian distributions in
engineering \cite{du2001most}, which replaces $F$ in $F(\theta)\geq z$
by its second-order Taylor expansion at $\theta^\star$,
\begin{equation}
\label{eq:PE-SORM F}
\begin{aligned}
F^{SO}(\theta):= F(\theta^\star)+\langle\dthetaF{\theta^\star},\theta-\theta^\star\rangle+\frac{1}{2}\langle\theta-\theta^\star,\ddthetaF{\theta^\star}(\theta-\theta^\star)\rangle.
\end{aligned}
\end{equation}
Since $F(\thetastar)=z$, the corresponding estimate of $P(z)$ becomes
\begin{equation}
\label{eq:PE-SORM}
\begin{aligned}
P^{SO}(z)=\mu\left(\mathcal{Q}(z)\right)=e^{-I(\theta^\star(z))}\int_{-\infty}^\infty
e^{-|\eta^\star(z)| s} |\eta^\star(z)| \mu_{\eta^\star(z)}\left( \mathcal{Q}(z)\backslash\mathcal{H}(z,s)\right) \,ds,
\end{aligned}
\end{equation}
where
\begin{equation}
\label{eq:PE-Q(z)}
\begin{aligned}
\mathcal{Q}(z):=\left\lbrace\theta: F^{SO}(\theta)\geq z \right\rbrace 
=\left\lbrace \theta:\langle\dthetaF{\theta^\star},\theta-\theta^\star \rangle +\frac{1}{2}\langle\theta-\theta^\star, \ddthetaF{\theta^\star}(\theta-\theta^\star)\rangle\geq 0 \right\rbrace.
\end{aligned}
\end{equation}

For a multivariate Gaussian parameter, it is possible to find an
explicit approximation of $P^{SO}(z)$.  First, we start with the
standard normal case.

\begin{lemma}[Second-order approximation for standard normal distribution]\label{lm:SO}
  Let $\xi\Gauss{0}{I_n}$ in\ $\RR^n$ with measure $\musn$,
  $\xi^\star=\xisnorm\eone$, aligned with the first basis vector, is the unique global minimizer of $\|\xi\|^2$
  on the set
  $\Qxi:=\left\lbrace \xi:\left\langle \eone,\xi-\xistar \right\rangle
    +\frac{1}{2}\left\langle \xi-\xistar,H\left( \xi-\xistar\right)
    \right\rangle \geq 0 \right\rbrace $, where $H\in\RR^{n\times n}$
  is a symmetric matrix such that $I_n-\xisnorm H \succeq c_0 I_n$ with
  $c_0 >0$. Then, $\musn(\Qxi)$ satisfies
	\begin{equation}
	\label{eq:PE-SORM Gauss xi}
	\begin{aligned}
	\musn(\Qxi)
	\approx(2\pi)^{-\frac{1}{2}}\frac{1}{\xisnorm}e^{-\frac{1}{2}\|\xistar\|^2}\prod_{i=1}^{n-1}\left[1- \xisnorm\lambda_i\left( H_1 \right)  \right]^{-\frac{1}{2}},
	\end{aligned}
	\end{equation}
	where the asymptotic estimate holds for
	$\xisnorm\to\infty$. Here, $H_1:=\Proj
	H\Proj^\top\in\RR^{(n-1)\times(n-1)}$ is the submatrix
	obtained by removing the first row and column of $H$,
	and $\lambda_i(\cdot)$ denotes the $i$-th eigenvalue.
\end{lemma}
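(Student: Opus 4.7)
I would compute $\musn(\Qxi)$ by Fubini, splitting $\xi$ along the distinguished direction $\eone$ and its orthogonal complement. Write $\xi = \xi_1\eone + \Proj^\top \zeta$ with $\zeta\in\RR^{n-1}$, and decompose $H$ in block form with $(1,1)$-entry $h_{11}\in\RR$, off-diagonal column $h_{12}\in\RR^{n-1}$, and principal submatrix $H_1=\Proj H\Proj^\top$. With the shift $t:=\xi_1-\xisnorm$, the defining inequality of $\Qxi$ becomes
\begin{equation*}
\tfrac{1}{2}h_{11}\, t^2 + (1+h_{12}^\top\zeta)\,t + \tfrac{1}{2}\zeta^\top H_1\zeta \;\ge\; 0,
\end{equation*}
and for each $\zeta$ I denote by $t_*(\zeta)$ the smallest $t$ from which an unbounded half-line of admissible values departs. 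Since the standard normal density factorizes,
\begin{equation*}
\musn(\Qxi) \;=\; \int_{\RR^{n-1}}\frac{e^{-\|\zeta\|^2/2}}{(2\pi)^{(n-1)/2}}\left(\int_{t_*(\zeta)}^\infty \frac{e^{-(\xisnorm+t)^2/2}}{\sqrt{2\pi}}\,dt\right)d\zeta,
\end{equation*}
up to a contribution from any bounded pocket of admissible $t$'s located far from $\xisnorm$, which is exponentially suppressed by the Gaussian weight and can be discarded.

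Next I would estimate the inner integral by the Mills-ratio asymptotic $\int_a^\infty\phi(x)\,dx\sim \phi(a)/a$ as $a\to\infty$, applied with $a=\xisnorm+t_*(\zeta)$. In the dominant regime $\zeta=O(1)$, both $t_*(\zeta)$ and $t_*^2/2$ are subleading compared with $\xisnorm\cdot t_*$, so the inner integral equals
\begin{equation*}
\frac{e^{-\xisnorm^2/2}}{\xisnorm\sqrt{2\pi}}\,e^{-\xisnorm\, t_*(\zeta)}\bigl(1+o(1)\bigr).
\end{equation*}
At the same order of accuracy, solving the quadratic gives $t_*(\zeta)=-\tfrac12\zeta^\top H_1\zeta + O(1/\xisnorm)$, since the $h_{11}t^2$ and cross terms are subdominant when $t=O(1)$. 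Inserting these approximations, the outer integrand becomes $\exp(-\tfrac12\zeta^\top(I_{n-1}-\xisnorm H_1)\zeta)$ up to a factor $1+o(1)$, and the hypothesis $\<\xi,(I-\xisnorm H)\xi\>>0$ for $\xi\perp\eone$ is exactly positive definiteness of $I_{n-1}-\xisnorm H_1$, so the $\zeta$-integral is a convergent Gaussian. Evaluating it as $(2\pi)^{(n-1)/2}\det(I_{n-1}-\xisnorm H_1)^{-1/2}$, collecting the prefactors (the $(2\pi)$ factors collapse to $(2\pi)^{-1/2}$), and expanding $\det(I_{n-1}-\xisnorm H_1)=\prod_{i=1}^{n-1}(1-\xisnorm\lambda_i(H_1))$ produces the claimed formula.

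The main obstacle is making the several ``drop subdominant terms'' steps uniform in $\zeta$. The natural cure is to split the $\zeta$-integration into a ball $\{\|\zeta\|\le R\}$, on which a uniform Taylor bound controls the expansion of $t_*(\zeta)$, the dropped $t^2/2$ in the Gaussian exponent, and the Mills-ratio remainder with relative error $o(1)$, and its complement, on which positive definiteness of $I_{n-1}-\xisnorm H_1$ makes the effective weight $e^{-\frac12\zeta^\top(I_{n-1}-\xisnorm H_1)\zeta}$ decay fast enough to render the contribution exponentially smaller than $e^{-\xisnorm^2/2}/\xisnorm$. One also needs to verify that bounded pockets of $\Qxi$ away from $\xistar$ (which can occur when $h_{11}>0$) are exponentially suppressed by the Gaussian centered at the origin, so they do not spoil the leading-order estimate.
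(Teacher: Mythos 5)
Your argument follows essentially the same route as the paper's proof: split $\xi$ along $\eone$ and its orthogonal complement, discard the subdominant terms $\tfrac12 h_{11}t^2$ and $h_{12}^\top\zeta\, t$ in the quadratic defining $\Qxi$, integrate out the $\eone$-direction to pick up the factor $(\xisnorm\sqrt{2\pi})^{-1}e^{-\xisnorm^2/2}e^{-\xisnorm t_*(\zeta)}$, and evaluate the resulting $\zeta$-Gaussian with matrix $I_{n-1}-\xisnorm H_1$ as the determinant product. The only differences are cosmetic (Mills ratio in place of the paper's linearization of $(\xisnorm+s)^2$) and that you are more explicit than the paper about uniformity in $\zeta$ and about bounded pockets of $\Qxi$ when $h_{11}>0$, which is a welcome refinement rather than a divergence.
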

\begin{proof}
	
  First, we split $\xi$ as
  $\xi=\xistar+\|\xistar\| (s\eone+\eortho),\ s\in\RR,\ \eortho\in\eospace$, and use  the property that $\eone$ and $\eortho$
  are orthogonal to
  obtain
\begin{equation*}
\begin{aligned}
\Qxi & =\left\lbrace\xi: \left\langle \eone,\xi-\xistar \right\rangle +\frac{1}{2}\left\langle \xi-\xistar,H\left( \xi-\xistar\right) \right\rangle\geq 0 \right\rbrace \\
& =\left\lbrace\xi: s +\frac{1}{2}\left\langle  s\eone+\eortho,\|\xistar\|H\left(  s\eone+\eortho\right) \right\rangle\geq 0 \right\rbrace \subseteq \left\lbrace\xi: s +\frac{1}{2}(1-c_0) \left(s^2+ \|\zeta\|_{\RR^{n-1}}^2\right)\geq 0 \right\rbrace, \\
\end{aligned}
\end{equation*}
	where $\zeta:=\Proj(\eortho)\in\RR^{n-1}$ and the last relation is obtained by applying $I_n-\xisnorm H \succeq c_0 I_n$ to $s\eone+\eortho$. Thus, for $\forall \xi \in \Qxi$,
		\begin{equation}
	\label{eq:PE-xinorm-asymp}
	\xinorm^2=\|\xistar\|^2\left( 1+2s+s^2+\|\zeta\|_{\RR^{n-1}}^2\right) \ge \xisnorm^2 \left( 1+c_0 s^2 + c_0\|\zeta\|^2_{\RR^{n-1}} \right).
	\end{equation}
	Since this term is in the exponent of the probability density $e^{-\frac{1}{2}\xinorm^2}$, the mass will be concentrated around the part that $|s|$ and $\|\zeta\|_{\RR^{n-1}} $ are close to zero as $\|\xistar\|\to \infty$. Further, from the equality in $\eqref{eq:PE-xinorm-asymp}$, we conclude that $\|\zeta\|_{\RR^{n-1}} = O(1/\|\xistar\|)$ and $s = O(1/\|\xistar\|^2)$. In this regime, the exponent of the integrand becomes
		\begin{equation}
	\label{eq:PE-xinorm-SORM}
	\xinorm^2=\|\xistar\|^2\left( 1+2s+s^2+\|\zeta\|_{\RR^{n-1}}^2\right) \approx \|\xistar\|^2\left( 1+2s+\|\zeta\|_{\RR^{n-1}}^2\right),
	\end{equation}
       and the term in the definition of
        $\Qxi$ becomes
	\begin{equation}
	\label{eq:PE-SORM-Qxi-split}
	\begin{aligned}
	&\left\langle \eone,\xi-\xistar \right\rangle +\frac{1}{2}\left\langle \xi-\xistar,H\left( \xi-\xistar\right) \right\rangle\\
	& =\xisnorm \left( s+\frac{\xisnorm H_{11}}{2} s^2+ \langle \xisnorm H_{2\cdots
          n,1},\zeta\rangle_{\RR^{n-1}}s + \frac{1}{2}\langle\zeta,\xisnorm H_1\zeta\rangle_{\RR^{n-1}}\right)\\
      &\approx \xisnorm \left( s+ \frac{1}{2}\langle\zeta,\xisnorm H_1\zeta\rangle_{\RR^{n-1}}\right)
       ,
	\end{aligned}
	\end{equation}
	where $H_{11}\in\RR$ is the $(1,1)$-entry of $H$, and
        $H_{2\cdots n,1}\in\RR^{n-1}$ is the first column of $H$
        without the first component. Here, we use the asymptotics of $s$ and $\zeta$ when $\xisnorm\to \infty$ and also think $\xisnorm H = O(1)$ from the condition $I_n-\xisnorm H \succeq c_0 I_n$. Thus, we can compute the measure
        $\musn(\Qxi)$ using integration over $\eone$ and its
        orthogonal complement and the asymptotic estimates  \eqref{eq:PE-xinorm-SORM}, 
        \eqref{eq:PE-SORM-Qxi-split} and Fubini's
        theorem
         to obtain the asymptotic estimate
	\begin{align*} 
		\musn(\Qxi)
	&=(2\pi)^{-\frac{n}{2}}\int_{ s+\frac{\xisnorm H_{11}}{2} s^2+ \langle \xisnorm H_{2\cdots
			n,1},\zeta\rangle_{\RR^{n-1}}s + \frac{1}{2}\langle\zeta,\xisnorm H_1\zeta\rangle_{\RR^{n-1}}\geq 0}\hspace*{-9em} e^{-\frac{1}{2}\|\xistar\|^2\left( 1+2s+s^2+\|\zeta\|_{\RR^{n-1}}^2\right)} \xisnorm^n d \zeta ds\\
&\approx(2\pi)^{-\frac{n}{2}}\int_{s+ \frac{1}{2}\langle\zeta,\xisnorm H_1\zeta\rangle_{\RR^{n-1}} \geq
		0}e^{-\frac{1}{2}\|\xistar\|^2\left( 1+2s+\|\zeta\|_{\RR^{n-1}}^2\right)} \xisnorm^n d \zeta
	ds\\
	&=(2\pi)^{-\frac{n}{2}} \xisnorm^n e^{-\frac{1}{2}\|\xistar\|^2}  \int_{\RR^{n-1}}e^{-\frac{1}{2}\xisnorm^2\|\zeta\|_{\RR^{n-1}}^2}\left(\int_{-\frac{1}{2}\langle\zeta,\xisnorm H_1\zeta \rangle_{\RR^{n-1}} }^{\infty}e^{-\|\xi^\star\|^2s}   ds \right)d\zeta\\
	&=(2\pi)^{-\frac{n}{2}} \xisnorm^{n-2} e^{-\frac{1}{2}\|\xistar\|^2}\int_{\RR^{n-1}} e^{-\frac{1}{2}\left\langle \zeta,\xisnorm^2\left(I_{n-1}-\xisnorm H_1 \right)\zeta \right\rangle_{\RR^{n-1}}  } d\zeta.\\
	\intertext{The assumption $I_n-\xisnorm H \succeq c_0 I_n$ with $c_0>0$ implies that $I_{n-1}-\xisnorm H_1$ is
          positive, and thus we obtain that}
      \musn(\Qxi) &\approx
	(2\pi)^{-\frac{1}{2}} \xisnorm^{n-2} e^{-\frac{1}{2}\|\xistar\|^2}\det\left[ \xisnorm^2\left(I_{n-1}-\xisnorm H_1 \right)\right]^{-\frac{1}{2}}\\
	&=(2\pi)^{-\frac{1}{2}} \dfrac{1}{\|\xistar\|}e^{-\frac{1}{2}\|\xistar\|^2}\prod_{i=1}^{n-1}\left[1 -\xisnorm\cdot \lambda_i\left( H_1\right)  \right]^{-\frac{1}{2}}.
	\end{align*}
\end{proof}
Note that the condition $\<\xi,(I-\xisnorm H)\xi\> >0$ in \cref{lm:SO}
is equivalent to $1/\xisnorm >
\<\xi,H\xi\>/\|\xi\|^2
$. Geometrically, this condition means that
the curvature of the centered circle through $\xistar$ must be an upper bound
for the eigenvalues of $H_1$, i.e., the projection of $H$ onto the
space orthogonal to $e_1$. This circle is the level set through
$\xistar$ of the rate function for the standard normal distribution.
In the generalization of \cref{lm:SO} presented next, such a condition
follows from the second-order optimality condition of the
LDT-minimizer. This result uses the affine transformation
\eqref{eq:PE-affine transfer} and applies \cref{lm:SO} to obtain an
approximation of $P^{SO}(z)$ in \eqref{eq:PE-SORM}.
\begin{theorem}[Second-order approximation for general Gaussian
  distributions]\label{thm:SO}
  Let \Gaussiancase, denote as $\thetastar(z)$ the optimizer with
  $\lambda>0$ of \eqref{eq:LDT-min}, and define the rotation operator
  as in \eqref{eq:PE-affine transfer}. Additionally, assume $F$ is
  twice continuously differentiable and $\thetastar$ is also the unique global minimizer of $I(\cdot ) $on $\mathcal{Q}(z)$,
  satisfying that $I_n -\lambda (\halfC)^\top \nabla^2_\theta F(\thetastar) \halfC\succeq c_0 I_n$ with $c_0>0$. Then, the second-order
  approximation $P^{SO}(z)$ defined in \eqref{eq:PE-SORM} can be
  approximated as
	\begin{equation}
	\label{eq:PE-SORM Gauss}
	\begin{aligned}
	P^{SO}(z)\approx \dfrac{(2\pi)^{-1/2}}{\sqrt{2I(\theta^\star(z))}}e^{-I(\theta^\star(z))}\prod_{i=1}^{n-1}\left[1 -\lambda
	\lambda_i\left( \Proj R^\top(\halfC)^\top\ddthetaF{\theta^\star(z)} \halfC R \Proj^\top \right)  \right]^{-\frac{1}{2}},
	\end{aligned}
	\end{equation}
	where the asymptotic estimate holds for $z\to\infty$. As before,
	$\lambda_i(\cdot)$ is the $i$-th eigenvalue and $\Proj$ is the
	projection onto the subspace orthogonal to the first basis vector.
\end{theorem}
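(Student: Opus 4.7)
The plan is to push the entire problem through the affine transformation $\theta = A\xi + \theta_0$ with $A = C^{1/2}R$ introduced in \eqref{eq:PE-affine transfer}, so that $\mu$ becomes $\mu^{SN}$ and $\tilde I(\xi) = \tfrac12\|\xi\|^2$, and then invoke \cref{lm:SO} on the transformed second-order set. Since the transformation is a measure-preserving bijection in the sense that $\mu(\mathcal{Q}(z)) = \mu^{SN}(\tilde{\mathcal{Q}}(z))$, once I rewrite $\mathcal{Q}(z)$ in $\xi$-coordinates in the exact form required by the lemma the conclusion follows by a direct substitution.

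First I would rewrite $\mathcal{Q}(z)$ in $\xi$-coordinates. Using \eqref{eq:PE-dF xi} and the fact that, by construction, $\xi^\star$ and hence $\nabla_\xi\tilde F(\xi^\star)$ are positive multiples of $e_1$, the set $\mathcal{Q}(z)$ maps, after dividing the defining inequality by $\|\nabla_\xi\tilde F(\xi^\star)\|$, to
\[
\tilde{\mathcal{Q}}(z) = \left\{\xi : \langle e_1,\xi-\xi^\star\rangle + \tfrac12\langle \xi-\xi^\star, H(\xi-\xi^\star)\rangle \geq 0\right\},
\qquad H := \frac{\nabla^2_\xi\tilde F(\xi^\star)}{\|\nabla_\xi\tilde F(\xi^\star)\|}.
\]
The Euler--Lagrange equation \eqref{eq:PE-E-L xi}, combined with $\nabla_\xi\tilde I(\xi^\star) = \xi^\star$ from \eqref{eq:PE-dF xi}, gives $\xi^\star = \lambda\nabla_\xi\tilde F(\xi^\star)$ and therefore $\|\xi^\star\| = \lambda\|\nabla_\xi\tilde F(\xi^\star)\|$. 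This yields the key identity
\[
\|\xi^\star\|\,H \;=\; \lambda\,\nabla^2_\xi\tilde F(\xi^\star) \;=\; \lambda\,A^\top\nabla^2_\theta F(\theta^\star)A,
\]
which will let me translate the conclusion of \cref{lm:SO} back to $\theta$-coordinates.

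The main subtlety is verifying the positivity hypothesis of \cref{lm:SO}, namely $\langle\xi,(I-\|\xi^\star\|H)\xi\rangle>0$ for every $\xi\perp e_1$. Here I would conjugate the sufficient second-order condition $C^{-1}-\lambda\nabla^2_\theta F(\theta^\star)\succ 0$ by $A$. Using $A^\top C^{-1}A = R^\top C^{1/2}C^{-1}C^{1/2}R = R^\top R = I$, I obtain $I - \lambda A^\top\nabla^2_\theta F(\theta^\star)A \succ 0$ on all of $\mathbb R^n$, which by the key identity equals $I-\|\xi^\star\|H$. The restriction to $\xi\perp e_1$ then follows trivially. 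It is precisely at this point that the hypothesis of the theorem, which is strictly stronger than the tangent-space necessary condition \eqref{eq:LDT-SONC}, does its work: by demanding positivity on all of $\mathbb R^n$, no projection onto the tangent space is required before conjugation.

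Finally I would apply \cref{lm:SO} and translate back. Using $\|\xi^\star\|^2 = (\theta^\star-\theta_0)^\top C^{-1}(\theta^\star-\theta_0) = 2I(\theta^\star(z))$ and
\[
\|\xi^\star\|\,H_1 \;=\; P_n(\|\xi^\star\|H)P_n^\top \;=\; \lambda\,P_n R^\top(C^{1/2})^\top\nabla^2_\theta F(\theta^\star)C^{1/2}R P_n^\top,
\]
the prefactor $\|\xi^\star\|^{-1}e^{-\|\xi^\star\|^2/2}$ in \eqref{eq:PE-SORM Gauss xi} becomes $e^{-I(\theta^\star(z))}/\sqrt{2I(\theta^\star(z))}$, and the eigenvalue product $\prod_{i=1}^{n-1}[1 - \|\xi^\star\|\lambda_i(H_1)]^{-1/2}$ becomes the product appearing in \eqref{eq:PE-SORM Gauss}. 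Assembling these substitutions in the estimate from \cref{lm:SO} produces \eqref{eq:PE-SORM Gauss}, with the asymptotic regime $\|\xi^\star\|\to\infty$ corresponding to $z\to\infty$ via \Cref{th:as3}.
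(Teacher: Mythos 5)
Your proposal is correct and follows essentially the same route as the paper's proof: push everything through the affine transformation $\theta = A\xi + \theta_0$, identify $H = \nabla^2_\xi\tilde F(\xi^\star)/\|\nabla_\xi\tilde F(\xi^\star)\|$, use the Euler--Lagrange relation to obtain $\|\xi^\star\|H = \lambda A^\top\nabla^2_\theta F(\theta^\star)A$, verify the positivity hypothesis of \cref{lm:SO} by conjugating $C^{-1} - \lambda\nabla^2_\theta F(\theta^\star)\succ 0$ with $A$, and translate the conclusion of \cref{lm:SO} back to $\theta$-coordinates with $\|\xi^\star\| = \sqrt{2I(\theta^\star(z))}$. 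Your version is if anything slightly more explicit than the paper's on the verification of the positivity hypothesis and on which hypothesis of the theorem makes that step go through without projecting to the tangent space, but the underlying argument is the same.
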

\begin{proof}
	Using \eqref{eq:PE-dF xi}, the set $\mathcal{Q}(z)$ defined in
        \eqref{eq:PE-Q(z)} is affinely transformed to
	\begin{equation*}
	\begin{aligned}
	&	\left\lbrace \xi: \left\langle A^{-\top} \dxiF{\xistar} ,A\xi-A\xistar \right\rangle  +\frac{1}{2}\left\langle A\xi-A\xistar, A^{-\top} \ddxiF{\xistar} A^{-1} (A\xi-A\xistar)\right\rangle \geq 0   \right\rbrace \\
	=&	\left\lbrace \xi: \|\dxiF{\xistar}\|  \left\langle\eone,\xi-\xistar \right\rangle  +\frac{1}{2}\left\langle \xi-\xistar, \ddxiF{\xistar} (\xi-\xistar)\right\rangle \geq 0  \right\rbrace \\
	=&\left\lbrace \xi:\left\langle\eone,\xi-\xistar\right\rangle  +\frac{1}{2}\left\langle \xi-\xistar, H (\xi-\xistar)\right\rangle   \right\rbrace=\Qxi,
	\end{aligned}
	\end{equation*}
	with $H=\ddxiF{\xistar}/ \|\dxiF{\xistar}\| $. Thus,
	$P^{SO}(z)=\mu(\mathcal{Q}(z))=\musn(\Qxi)$, for which we use
	\cref{lm:SO}. Combining \eqref{eq:PE-dF xi} and the
	Euler-Lagrange equation \eqref{eq:PE-E-L xi}, we have
	\begin{equation*}
	H=\frac{\ddxiF{\xistar} }{\|\dxiF{\xistar}\|}=\frac{{\|\dxiI{\xistar}\|}}{\|\dxiF{\xistar}\|}
	\frac{A^\top \ddthetaF{\thetastar} A}{\xisnorm}.
	\end{equation*}
	Thus, $I_n-\xisnorm H = I_n - \lambda A^\top \ddthetaF{\thetastar} A = R^\top (I_n - \lambda (\halfC)^\top \nabla^2_\theta F(\thetastar) \halfC) R \succeq c_0 I_n $, satisfying the assumption in \cref{lm:SO}.
	Using this $H$ and $\xisnorm=\sqrt{2I(\thetastar)}$ from
        \eqref{eq:PE-F xi} in \cref{lm:SO}, we obtain 
	\begin{equation*}
	P^{SO}(z)\approx \dfrac{(2\pi)^{-1/2}}{\sqrt{2I(\theta^\star(z))}}e^{-I(\theta^\star(z))}\prod_{i=1}^{n-1}\left[1 -\lambda
	\lambda_i\left( \Proj A^\top\ddthetaF{\theta^\star} A\Proj^\top \right)  \right]^{-\frac{1}{2}}.
	\end{equation*}
	Using the definition of the linear operator $A$ in
	\eqref{eq:PE-affine transfer} finishes the proof.
\end{proof}
Note that \eqref{eq:PE-SORM Gauss} also holds when $P^{SO}(z)$ is
replaced by $P(z)$, if we further assume $\{\xi: F(A\xi+\theta_0) \geq z \}\subseteq \{ \xi: \hat{F}(\xi/r(z)) \geq 0 \}$, where $r(z)$ is a monotonically increasing function and $r(z)\to \infty$ as $z\to \infty$, $\xistar(z)=A^{-1}(\thetastar(z) -\theta_0)$ is the unique global minimizer of $\xinorm^2$ on $\{ \xi: \hat{F}(\xi/r(z)) \geq 0 \}$ and  $I_n - \nabla^2\hat{F}(\xistar(z)) \succ 0$. This follows from asymptotic expansions of
multi-normal Laplace-type integrals
\cite[Chapter~8]{bleistein1986asymptotic} and
\cite[Appendix~I]{breitung1984asymptotic} using that $F$ is twice
differentiable, that $\thetastar(z)$ is the minimizer of $I(\theta)$
over $\varOmega(z)$, and that $I(\thetastar(z))\to \infty$ as
$z\to \infty$ as assumed in \Cref{th:as3}.  Thus, $P^{SO}(z)$ is an
asymptotic approximation of $P(z)$ and we obtain an asymptotic
approximation of the prefactor $C_0(z)$, i.e.,
$P(z)\approx C_0(z)\exp(-I(\theta^\star(z))), \text{as } z\to\infty$,
where $C_0(z)$ is given by the right hand side in \eqref{eq:PE-SORM
  Gauss} neglecting the exponential term.

Compared to \eqref{eq:PE-SORM Gauss}, the probability estimation based
on the first-order (other than the second-order) approximation of
$\varOmega(z)$ is easier to compute.  This approach, which is known in
engineering as First-Order Reliability Method (FORM) is summarized in
\cref{sec:FORM}. While it only requires $\thetastar$, it does not
provide a controllable approximation of the prefactor $C_0(z)$. In
fact, FORM must be multiplied with a correction factor to obtain an
asymptotically exact approximation. This leads to an alternative
approach to approximate $P^{SO}(z)$ typically used in
engineering. Namely, using the Euler-Lagrange equations
\eqref{eq:PE-E-L xi} and the first-order approximation
\eqref{eq:PE-FORM Gauss}, we can reinterpret \eqref{eq:PE-SORM Gauss}
as a refinement of $P^{FO}(z)$ with a correction term:
\begin{equation}
\label{eq:PE=SORM k_i}
P^{SO}(z)\approx P^{FO}(z)\prod_{i=1}^{n-1}\left(1+\sqrt{2I(\theta^\star(z))} k_i\right)^{-1/2}.
\end{equation}
Here, the $k_i$'s are the eigenvalues of
$-\Proj A^\top\ddthetaF{\theta^\star}
A\Proj^\top/\|A^\top\dthetaF{\theta^\star}\|$, i.e., the principle
curvatures of $F$ at $\theta^\star$.  This is the formulation that is
referred to as SORM in engineering, where the curvatures $k_i$ are
typically computed directly as detailed in \cite{du2001most}. However,
we prefer the formulation \eqref{eq:PE-SORM Gauss} over
\eqref{eq:PE=SORM k_i} as it lends itself to approximating dominating
eigenvalues with low-rank methods, which is particularly useful for
high parameter dimensions. This approach, which to the best of our
knowledge is novel, is presented next.

\subsection{Low-rank approximation of covariance-preconditioned
	Hessian of $F$} \label{subsec:low-rankSORM}
A natural question is if the approximation for $P^{SO}(z)$ presented
in \cref{thm:SO} can be computed efficiently. In particular for
problems where the parameter dimension $n$ is large, and where the
definition of $F$ involves the solution of an expensive-to-solve PDE,
computation of the Hessian matrix $\ddthetaF{\theta^\star}$ may be
infeasible as computation of each of its columns requires at least two
PDE solves. However, \eqref{eq:PE-SORM Gauss} shows that mostly the
eigenvalues of $\Proj R^\top(\halfC)^\top\ddthetaF{\theta^\star}
\halfC R \Proj^\top$ that are significantly different from zero
contribute to the product in \eqref{eq:PE-SORM Gauss} and thus to the
estimate for $P^{SO}(z)$. Geometrically, these eigenvalues correspond
to directions in which the boundary $\partial\varOmega(z)$ has
large curvature. Additionally, these directions must correspond to
large eigenvalues of the covariance matrix $C$, i.e., they must also be
important for the underlying Gaussian distribution.

Using either the Lanczos algorithm or a randomized SVD
\cite{HalkoMartinssonTropp11, CullumWilloughby85} allows to compute
the dominant eigenvalues of
$\Proj R^\top(\halfC)^\top\ddthetaF{\theta^\star} \halfC R \Proj^\top$
without explicit construction of this matrix but only through
application to vectors. The number of required matrix-vector
applications for these methods is typically only slightly larger than
the number of dominant eigenvalues. This number depends on properties
of $\ddthetaF{\theta^\star}$ and $C$. While one cannot make general
statements about the number of dominant eigenvalues, we show in
\cref{sec:low-rank} that for our tsunami example, this number is
small, and is insensitive to $\lambda>0$. Such a low-rank property is
likely to also hold for other problems due to the structure of the
matrix $(\halfC)^\top\ddthetaF{\theta^\star} \halfC$, which we refer
to as \emph{covariance-preconditioned parameter-to-event Hessian}. A
similar operator occurs in Bayesian inverse problems, where it is
referred to as the prior-preconditioned misfit Hessian
\cite{Bui-ThanhGhattasMartinEtAl13}.  Dominant eigenvalues of
$\ddthetaF{\theta^\star}$ correspond to directions with strong (either
positive or negative) curvature of $\partial\varOmega(z)$, i.e., their
occurrence depends on the nonlinearity of the parameter-to-event
map. Large eigenvalues of $C$ correspond to directions with large
variance, i.e., where the Gaussian measure has the majority of its
mass. Only parameter directions that are important for
$\ddthetaF{\theta^\star}$ and for $C$ have eigenvalues with a large
absolute value and thus contribute significantly to the right hand
side in \eqref{eq:PE-SORM Gauss}.

\section{Application to extreme tsunami probability estimation}\label{sec:AP}
As our main application, we study earthquake-induced tsunamis and
estimate the probability that they give rise to an extreme flooding
event on shore.  Tsunamis are caused by a sudden elevation change of
the ocean floor after fast, and potentially complex, slip at the fault
between two tectonic plates below the ocean floor. This slip process,
also called dynamic rupture, is caused by stress buildup over years or
decades. It typically occurs within seconds or, for the largest events
a few minutes.  In particular for large events, slip patterns are
complex and difficult to predict.  Hence, we model sudden ocean floor
elevation changes as a random parameter field. Since the fault slip
process is on a much faster time scale than the scale at which water
waves travel, we do not include time dependence in this random process
and consider the ocean floor elevation change as instantaneous.  The
map from these (random) parameters to the event, namely the average
wave height in a region close to shore, is governed by the shallow
water equation. Here, for simplicity, we use a one-dimensional shallow
water model. The next subsections describe the shallow water equations
and their discretization, modeling the distribution of the parameter
field, the parameter-to-event map and the computation of its
derivatives.  Numerical results in which we study the performance of
the proposed methods and the physics implications are presented in
\cref{sec:results}.

\subsection{One-dimensional shallow water equations}
To model tsunami waves, we use the one-dimensional shallow water
equations \cite{leveque2008high} defined on a domain $\mathcal{D}=[a,b]$
for times $t\in[0,T_F]$. The domain represents a slice through the sea,
that includes the shallow part near the shore and the part
where the ocean floor elevation can change.
We denote the horizontal fluid velocity as $u(x,t)$ and the height of
water above the ocean floor by $h(x,t)$. The bathymetry $B(x)$ is the
negative depth of the ocean at rest, i.e., $h(x,t)+B(x)=0$ when the
ocean is at rest.  The shallow water equations in conservative form
are
\begin{equation}
\label{eq:AP-SWE}
\left[ \begin{array}{c}
h\\ 
hu
\end{array} \right]_t+\left[ \begin{array}{c}
hu\\ 
hu^2+\frac{1}{2}gh^2
\end{array} \right]_x=\left[ \begin{array}{c}
0\\ 
-ghB_x
\end{array} \right],
\end{equation}
where $g$ is the gravitational constant and the subscripts $t,x$ denote
derivatives with respect to time and location.
Introducing the variable $v:=hu$ and augmenting \eqref{eq:AP-SWE} with
initial and boundary condition leads to
\begin{subequations}
\label{eq:AP-PDE}
\begin{alignat}{2}
  h_t+v_x&=0 \quad && \text{ on } \mathcal{D}\times [0,T_F],\label{eq:AP-PDE1}\\
  v_t+\left(\frac{v^2}{h}+\frac{1}{2}gh^2\right)_x+ghB_x&=0 && \text{ on
  } \mathcal{D}\times [0,T_F],\label{eq:AP-PDE2}\\
h(x,0)=-B_0(x),\  v(x,0)&=0 && \text{ for } x\in \mathcal{D}, \label{eq:AP-PDE3}\\
v(a,t)=v(b,t)&=0  && \text{ for } t\in [0,T_F].\label{eq:AP-PDE4}
\end{alignat}
\end{subequations}
Here, the initial condition \eqref{eq:AP-PDE3} assumes that the water
is at rest. It can be verified that if $B=B_0$, $h=-B_0$ and $v=hu=0$ for all
times. However, any change in the bathymetry $B$ results in a nonzero
solution. This is the main mechanism that generates tsunami
waves. Note that this form of the shallow water equations only allows
to incorporate the vertical bathymetry change $B-B_0$. Earthquakes
also alter the horizontal component of the bathymetry, but most likely
this does not have a large effect on tsunami waves. The reflective
boundary conditions \eqref{eq:AP-PDE4} are are not physically
accurate, but we assume that the boundary is far enough from the
region where the tsunami wave is generated or measured such that
unphysical reflections are not relevant. For a discussion on different
boundary conditions for the shallow water equations, we refer to
\cite{vreugdenhil2013numerical}.

The domain we use for our tsunami model problem is shown in
\cref{fig:tohoprobset}. This setup is inspired by the 2011 Tohoku-Oki
earthquake and tsunami \cite{FujiwaraKodairaNoEtAl11}.  The geometry
represents a two-dimensional slice with a bathymetry that models the
continental shelf and the pacific ocean to the east of Japan. We also
use a similar slip mechanism as occurred in the Tohoku-Oki earthquake,
as discussed next.

\begin{figure}[tb]
	\centering
	\begin{tikzpicture}[]
	\begin{axis}[compat=1.10, width=12.5cm, height=3.2cm, scale only axis,
	xmin=0, xmax=400, ymin=-11,ymax=.1,
	xlabel={Distance to shore [km]},
	ylabel={Depth [km]},
         y tick label style={
           /pgf/number format/.cd,
           scaled y ticks = false,
           set thousands separator={,},
           fixed,
         },
         ytick = {0,-2,-4,-6,-8,-10},
	legend style={font=\tiny,nodes=right}, 
	legend pos=south west]
\path[name path=sealevel] (axis cs: 0,0) -- (axis cs: 400,0);
	\addplot[name path=bathymetry,,orange!70,very
          thick,domain=0:400,dashed, samples=200] {1*(-0.05*(x<=45)+(-0.05-3.95/110*(x-45))*(x>45)*(x<=155)+
		(-4-4/45*(x-155))*(x>155)*(x<=200)+
		(-8+4/100*(x-200))*(x>200)*(x<=300)-
		4*(x>300))};	
	\addlegendentry{Reference bathymetry}
	\addplot[blue!75!green!30!white] fill between[ 
	of = sealevel and bathymetry, 
	soft clip={domain=0:400},
	];	\addlegendentry{Water}
	\path[name path=bottom] (axis cs: 0,-11) -- (axis cs: 400,-11);
	\addplot[orange!30!black!10!white] fill between[ 
	of =  bathymetry and bottom, 
	soft clip={domain=0:400},
	];	\addlegendentry{Rock/Sand}
	\addplot[red,line width=3pt,domain=40:44]
                {0};\addlegendentry{Observation location}
		\addplot[purple!60!blue,very thick,domain=165:195] {1*(-0.05*(x<=45)+(-0.05-3.95/110*(x-45))*(x>45)*(x<=155)+
			(-4-4/45*(x-155))*(x>155)*(x<=200)+
			(-8+4/100*(x-200))*(x>200)*(x<=300)-
			4*(x>300))};	\addlegendentry{Main up/downlift}
		\addplot [color=green!70!red, mark=none, mark size=2.5pt, very thick]
	table[x expr=\thisrowno{0}*0.001,y expr=\thisrowno{1}*0.001]
        {\data/fault.txt};\addlegendentry{Slip location}
	\end{axis}
	\end{tikzpicture}
	\caption{Problem setup inspired by Tohoku-Oki 2011
          earthquake/tsunami. Bathymetry changes (area in purple) are
          modeled as resulting from 20 randomly slipping patches in
          the slip region (in green, with end points $(178km,-9.9km)$ and
          $(187km,-9.1km)$) using the Okada model.
          The event we observe is the average wave height
          in the interval [40km,44km] close to shore (shown in red),
          where the water depth at rest is 50m.
        } \label{fig:tohoprobset}
\end{figure}
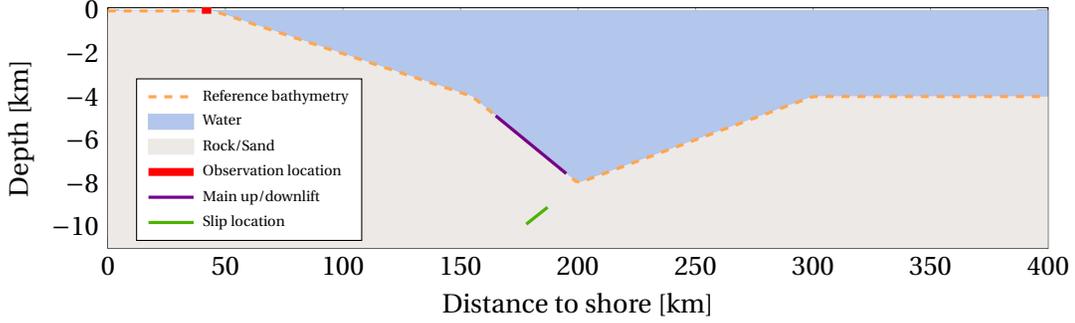

\subsection{Modeling random parameter field $B$ using subduction
  physics}\label{subsec:tsunamiB} The bathymetry $B(x)$, whose derivative
enters in the right hand side of \eqref{eq:AP-PDE}, changes during an
earthquake as a result of slip between plates under the ocean
floor. Since details of this slip process are difficult to predict, we
model the slip as a random process, and thus also the bathymetry field
$B$ is random. Since $B$ enters in the shallow water equations
\eqref{eq:AP-PDE}, the (space and time-dependent) solutions $h$ and
$v$ are random and hence also the event objective we will specify in
\cref{subsec:tsunamiF} is a random variable.

The relation between slip under the sea floor and the resulting
bathymetry change typically assumes that the earth's solid crust
behaves like a linear elastic material.  The commonly used Okada model
\cite{Okada85} assumes a finite number of slip patches in a fault
under the ocean floor, and evaluates expressions for a linear elastic
material to compute the induced bathymetry change.  We assume 20 slip
patches and model each of the uncertain slips of fault pairs as
independent Gaussian random parameter with mean zero and a standard
deviation of 10m. We assume the slip to be along the down-dip
direction, i.e., a positive slip value means that the overriding plate
(i.e., the sea floor) moves downwards along the fault while a negative
value means it is moving upwards. In this work we use a centered
Gaussian slip distribution, which is a simplification as realistic
earthquake slips are typically negative since they are caused by a sudden
stress release. 
We refer to \cite{leveque2016generating,gao2018defining} for more realistic
slip distribution models, which we are currently incorporating into
our framework. 
The
Okada model is defined for three-dimensional sea floor
deformations. By assuming that the width of each patch is infinite and
extracting the deformation in the direction of the slice plane, we
adopt the Okada implementation \cite{OkadaMatlab} to our
two-dimensional geometry. The model assumes that the crust has a
Poisson's ratio of $\nu=0.25$, which is the only elasticity parameter
that plays a role in the Okada model. The linear relationship between
skip patches and bathymetry change results in
\begin{equation}\label{eq:Okada->B}
  B(x) =  B_0(x) + (O S)(x) \: \text{ with } \:S=(s_1,\ldots,s_{20})^\top \:\text{ and }\: (OS)(x) := \sum_{i=1}^{20}s_i O_i(x),
\end{equation}
where $O_i$ is the bathymetry change due to the $i$-th slip patch, and
$s_i\sim \mathcal N(0,10)$. Hence
\begin{equation}
B\in \mathcal{B} :=\left\lbrace B_0(x) + \sum_{i=1}^{20}s_i O_i(x): s_i\in
\RR\right\rbrace.
\end{equation}
Random draws of the bathymetry change
$B-B_0$ are shown in \cref{fig:Bsamples}. While the slips are
independent, the bathymetry samples are smooth. This is due to
properties of linear elasticity, i.e., rough boundary conditions on
one part of the boundary result in a smooth displacement field on a
different part of the boundary. Note also that all random samples of
$B-B_0$ yield positive \emph{and} negative elevation changes as
typically also found in observations
\cite{FujiwaraKodairaNoEtAl11}. This is due to the fact that slip at
the fault zone is tangential and thus leads to elastic compression in
parts of the elastic domain and to extension in other parts.

Since the transformation \eqref{eq:Okada->B} between slips and the
bathymetry change is linear, $B$ is a Gaussian random field with mean
$B_0$ and covariance induced by the slip covariance matrix $C_s:=100
I_{20}$.  The rate function $I$ for a bathymetry $B\in \mathcal B$
with coefficient vector $S\in \RR^{20}$ is
\begin{equation}
\label{eq:AP-I}
I(B)=\frac 12 S^\top C_s^{-1} S =:\frac{1}{2}\langle\!\!\langle B-B_0,B-B_0\rangle\!\!\rangle_{C_s}.
\end{equation}

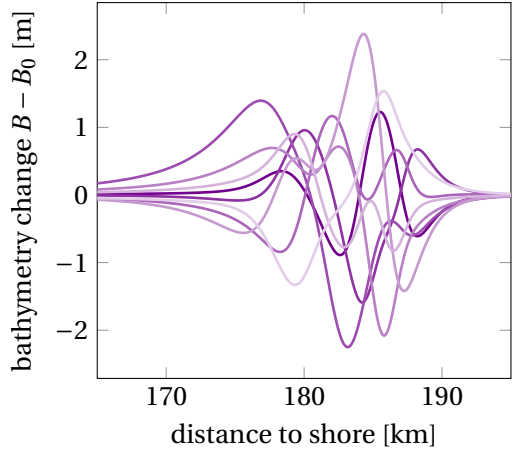
\begin{figure}[tb]\centering
	\begin{tikzpicture}[]
	\begin{axis}[compat=newest, width=5.5cm, height=5cm, scale only axis,
	xlabel={distance to shore [km]},
	ylabel={bathymetry change $B-B_0$ [m]},
	legend style={font=\tiny,nodes=right}, xmin=165, xmax=195]
	\addplot [color=purple!60!blue, line width=1]
	table[x expr=\thisrowno{0}*0.001,y=B] {\data/Toho_samples1std10nos20.txt};
	\addplot [color=purple!60!blue!80!white, mark=none, mark size=2pt, line width=1]
	table[x expr=\thisrowno{0}*0.001,y=B] {\data/Toho_samples2std10nos20.txt};
	\addplot [color=purple!60!blue!70!white, mark=none, mark size=2pt, line width=1]
	table[x expr=\thisrowno{0}*0.001,y=B] {\data/Toho_samples3std10nos20.txt};
	\addplot [color=purple!60!blue!60!white, mark=none, mark size=2pt, line width=1]
	table[x expr=\thisrowno{0}*0.001,y=B] {\data/Toho_samples4std10nos20.txt};
	\addplot [color=purple!60!blue!50!white, mark=none, mark size=2pt, line
	width=1]
	table[x expr=\thisrowno{0}*0.001,y=B] {\data/Toho_samples5std10nos20.txt};
	\addplot [color=purple!60!blue!40!white, mark=none, mark size=2pt, line
	width=1]
	table[x expr=\thisrowno{0}*0.001,y=B] {\data/Toho_samples6std10nos20.txt};
	\addplot [color=purple!60!blue!30!white, mark=none, mark size=2pt, line
	width=1]
	table[x expr=\thisrowno{0}*0.001,y=B] {\data/Toho_samples7std10nos20.txt};
	\addplot [color=purple!60!blue!20!white, mark=none, mark size=2pt, line
	width=1]
	table[x expr=\thisrowno{0}*0.001,y=B] {\data/Toho_samples8std10nos20.txt};
	\end{axis}
	\end{tikzpicture}
	\caption{Samples from the bathymetry change distribution
          computed from the Okada model with 20 slip fault pairs under
          the ocean floor.  Shown is the vertical ocean floor
          displacement.  Each slip is independent with mean zero and
          standard deviation of 10m. The main part of the ocean floor
          where bathymetry change arises is highlighted in purple in
          \Cref{fig:tohoprobset}.}\label{fig:Bsamples}
\end{figure}

\subsection{Measuring tsunami size close to shore}\label{subsec:tsunamiF}
After discussing the governing equations and the parameter
distribution for $B$, it remains to define how we measure events.
Namely, to measure the size of a tsunami close to shore, we average the wave height
$(h+B_0)$ in the area $[c,d]$.  This area is assumed to be
sufficiently far away from where the main bathymetry change occurs such
that we can consider $h+B_0$ rather than $h+B$.
Hence, for a measurement time $t\in[0,T_F]$, we define
$f^{ob}$ as
\begin{equation}
\label{eq:AP-f^ob}
f^{ob}(h,v;B,t):=\fint_c^d[h(x,t)+B_0(x)]dx:=\frac{1}{|d-c|}\int_c^d [h(x,t)+B_0(x)]dx,
\end{equation}
where $h$ and $v$ are the solutions of shallow water equations
\eqref{eq:AP-PDE} for given $B$, and
$\fint_c^d$ is the average of the integral over $[c,d]$. Since we do
not know exactly at what time $t$ the tsunami wave is close to
shore, we take the maximum over the time interval, resulting in the
parameter-to-event map $F: B \mapsto \bar F(h(B),v(B);B)$, where $\bar
F$ is defined as
\begin{equation}
\label{eq:AP-F}
\bar F(h,v;B) := \max_{t\in[0,T_F]} f^{ob}(h,v;B,t)= \max_{t\in[0,T_F]} \fint_c^d [h(x,t)+B_0(x)]dx.
\end{equation}
In the definition of $F$,  we
consider the variables $h$ and $v$ functions of $B$ through the
solution of the shallow water equations.
Thus, the probability we aim at estimating is the probability that the
maximum average wave height in $[c,d]$ exceed a threshold $z$,
where $B$ follows the distribution introduced in \cref{subsec:tsunamiB}.

The function $\bar F$ \eqref{eq:AP-F} involves the
$\max$-function, which makes optimization difficult. Thus, for
$\gamma>0$  we introduce the regularized parameter-to-event map
$F_\gamma: B\mapsto \bar F_\gamma(h(B),v(B);B)$, where
\begin{equation}
\label{eq:AP-F regularized}
\begin{aligned}
\bar F_\gamma(h,v;B):=
\gamma\log\left[ 
\dfrac{1}{T_F}\int_{0}^{T_F}\exp \left(\dfrac{1}{\gamma}\fint_c^d(h+B_0)dx\right) dt\right].
\end{aligned}
\end{equation}
The smaller $\gamma$, the better \eqref{eq:AP-F regularized} approximates
\eqref{eq:AP-F}. In particular,
\begin{equation}
\lim_{\gamma\rightarrow0}\bar F_\gamma(h,v;B)=	\lim_{\gamma\rightarrow0}\gamma\log\left[ 
\dfrac{1}{T_F}\int_{0}^{T_F}\exp \left(\dfrac{f^{ob}(h,v;B,T)}{\gamma} \right) dt\right]=\max\limits_{t\in[0,T_F]}f^{ob}(h,v;B,T).
\end{equation}

\subsection{LDT-optimization}
Given the parameter space, the governing equations and the event
measure, we now detail the LDT optimization problem
\eqref{eq:LDT-min} over the parameter $B\in\mathcal{B}$.
For the tsunami problem, $I(B)$ and $F(B)$ are defined in \eqref{eq:AP-I} and
\eqref{eq:AP-F} (or \eqref{eq:AP-F regularized}), respectively. The
parameter-to-event map $F$ involves the PDE \eqref{eq:AP-PDE} with
zero initial conditions and proper boundary conditions, which we omit
in the following discussions for brevity.
Since we consider the two parameter-to-event maps \eqref{eq:AP-F} and
\eqref{eq:AP-F regularized}, we obtain two LDT optimization
problems.
\paragraph{Regularized objective} Using the regularization
parameter-to-event map \eqref{eq:AP-F regularized}, the LDT problem is
the PDE-constrained optimization problem
\begin{equation}
  \label{eq:AP-min regularized}
  \begin{aligned}
    \min_{B,h,v}\ & I(B)-\lambda \bar F_{\gamma}(h,v;B), \\
    \text{subject to }\ &  \text{the PDE constraints \eqref{eq:AP-PDE}}.
  \end{aligned}
\end{equation}
For subsequent use, we define the reduced objective $  J_{\gamma, \lambda} (B):=I(B)-\lambda F_\gamma(B)$.
Thus, the PDE-constrained problem \eqref{eq:AP-min regularized} can be
written as unconstrained optimization problem over $B\in \mathcal{B}$.
While the objective $J_{\gamma,\lambda}(\cdot)$ is smooth, its
accurate evaluation can become difficult for small $\gamma>0$, and its
gradients can be large. An alternative to this regularized objective is
to consider the time of the largest average wave height close to shore
as an additional unknown, resulting in the second problem.

\paragraph{Time-optimal problem}
We can also consider a time-optimal LDT optimization that does not require
a regularization parameter $\gamma$. Using the
definition of $\bar F$ in \eqref{eq:AP-F}, additional optimization over the
time results in the PDE-constrained optimization problem
\begin{equation}
  \label{eq:AP-min time-opt}
  \begin{aligned}
    \min_{\begin{subarray}{c} B, h,v,\\ t\in[0,T_F] 
    \end{subarray}}&\ I(B)-\lambda  f^{ob}(h,v;B,t), \\ 
    \text{subject to }&\ \text{the PDE constraints \eqref{eq:AP-PDE}}.
  \end{aligned} 
\end{equation}
The corresponding reduced objective is $J_{\lambda}(B,t):=I(B)-\lambda  f^{ob}(h(B),v(B);B,t)$,
where $h(B)$ and $v(B)$ are again the solutions of shallow water equations \eqref{eq:AP-PDE}.

\subsection{Discretization and stabilization}\label{subsec:discretization}

To solve the optimization problems \eqref{eq:AP-min regularized} and
\eqref{eq:AP-min time-opt} numerically, we have to discretize the
continuous functions $B$, $v$, $h$ together with the governing
equations.
Since the shallow water equations \eqref{eq:AP-PDE vis} are
hyperbolic,
we use a discontinuous Galerkin finite element method (DG-FEM)
\cite{hesthaven2007nodal} to discretize the equations in space. For
discretization in time, we use a Runge-Kutta scheme.

Since the shallow water equations \eqref{eq:AP-PDE} are a system of
nonlinear hyperbolic equations, the solution can have shocks, i.e.,
the slope of the solution variables can become infinite. It is well
known that the numerical approximation of systems with shocks is
challenging \cite{LeVeque02}. This is even more compounded for
adjoint-based derivative computation. Some of the discretization and
stabilization choices we make here are in fact motivated by our focus
on adjoint-based derivatives, as will become clear in the subsequent
subsections.
Partially motivated by the need for well-defined discrete adjoint
equations (see \cref{subsec:adj-discrete}), we
add artificial viscosity to the shallow water equations
\eqref{eq:AP-PDE} to prevent slopes that cannot be resolved by the
discretization. There are different approaches of adding artificial
viscosity to the shallow water equations. One is adding viscosity for
both the mass and momentum conservation laws
\cite{chen2013adaptive}. Here, we only add viscosity to the momentum
equation, as discussed in \cite{mascia2006asymptotic}, where the
authors prove that the solutions of the resulting system preserves
stationary steady states and is asymptotically stable. This modified
problem is
\begin{subequations}
\label{eq:AP-PDE vis}
\begin{alignat}{3}
  h_t+v_x&=0 \quad && \text{ on } \mathcal{D}\times [0,T_F],\label{eq:AP-PDE vis1}\\
 v_t+\left(\frac{v^2}{h}+\frac{1}{2}gh^2-\epsilon h
 \varphi\right)_x+ghB_x&=0 && \text{ on } \mathcal{D}\times [0,T_F], \label{eq:AP-PDE vis2}\\
 \varphi+\left(-\frac{v}{h}\right)_x&=0 && \text{ on } \mathcal{D}\times [0,T_F], \label{eq:AP-PDE vis3} %
\end{alignat}
\end{subequations}
with the initial and boundary conditions \eqref{eq:AP-PDE3} and
\eqref{eq:AP-PDE4}.  Here, $\varphi(x,t)$ serves as an auxiliary variable
which allows to write the dissipative operator in a way suitable for a
DG scheme. The parameter $\epsilon$ controls how much artificial
viscosity is added, and we choose $\epsilon=O(|\bar{h}|)$ with
$\bar{h}$ being the element length as proposed in
\cite{ulbrich2019numerical,LeVeque02}.

Our implementation uses a DG discretization with linear interpolating
polynomials in space.
For \eqref{eq:AP-PDE vis1} and \eqref{eq:AP-PDE vis2}, we use a global
Lax-Friedrichs flux of the form
\begin{equation}
\label{eq:AP-LF flux}
f^*(q)=
\dfrac{f(q^-)+f(q^+)}{2}+\frac{C^{LF}}{2}n^-(q^--q^+), 
\end{equation}
where $q$ stands for either $h$ or $v$. Moreover, $f(q)$ is the
corresponding flux, $+$ and $-$ denote the exterior and
the interior value at each element interface,
and $C^{LF}$ is the global Lax-Friedrichs constant. A
less diffusive alternative to a global Lax-Friedrichs flux would be
a local variant, where the flux at each interface depends on the
state variable. While using such a local flux in the context of
adjoint equations might be possible, here we prefer to avoid
technical challenges and possible inconsistencies and use
the same global Lax-Friedrichs constant $C^{LF}$ for all elements:
\begin{equation}
C^{LF}=\max \left(  \left|\dfrac{v}{h}\right|+\sqrt{gh}\right).
\end{equation}
For \eqref{eq:AP-PDE vis3},
we use a central flux in
the DG scheme, i.e., the average of the values at the interfaces.
Although the numerical results presented in this paper use a
first-order DG scheme, the proposed method can be generalized to
higher-order spatial discretizations. To discretize in time, we use a
strong stability-preserving second-order Runge-Kutta (SSP-RK2) method
\cite{hesthaven2007nodal}. The strong stability-preserving (SSP) property guarantees preservation of
the total variation of the discrete solution.

\subsection{Adjoint-based gradient computation}\label{sec:adjoint}
Since the objectives $J_{\lambda}(\cdot)$ and
$J_{\gamma,\lambda}(\cdot)$ 
require the solution of a PDE, we use
adjoints to efficiently compute their derivatives
\cite{borzi2011computational,troltzsch2010optimal,Reyes15,HinzePinnauUlbrichEtAl09}. Here, we present
the continuous form of these adjoint equations. Their discretization
is summarized in \cref{subsec:adj-discrete}. We skip details of the
technical derivation and only present the results, starting with the
regularized objective.
\paragraph{Regularized objective}
To derive the adjoint system for the shallow water equations with
artificial viscosity \eqref{eq:AP-PDE vis}, we use a formal Lagrangian
approach, i.e., we define the Lagrangian as the sum of the objective
and the weak form of the state equations, where the test functions
take the role of the Lagrange multiplier functions. Then, setting
variations with respect to the state variables in all directions to
zero results in the adjoint equations in the unknowns $(p,w, \psi)$:
\begin{subequations}
\label{eq:AP-OTD adj reg}
\begin{alignat}{3}
p_t+\left(-\frac{v^2}{h^2}+gh-\epsilon
\varphi\right)w_x-\frac{v}{h^2}\psi_x -gB_xw+\lambda \partial_h
\bar F_{\gamma}&=0 \quad && \text{ on } \mathcal{D}\times [0,T_F],\label{eq:AP-OTD adj reg1}\\
 w_t +p_x+\frac{2v}{h}w_x-\frac{1}{h}\psi_x&=0 && \text{ on } \mathcal{D}\times [0,T_F],\label{eq:AP-OTD adj reg2}\\
 \psi-\epsilon h w_x&=0 && \text{ on } \mathcal{D}\times [0,T_F],\label{eq:AP-OTD adj reg3}\\
 p(x, T_F)=0, \ w(x,T_F)&=0 && \text{ for } x\in\mathcal{D},\label{eq:AP-OTD adj reg4}\\
 w(a,t)=w(b,t)&=0 && \text{ for } t\in[0,T_F].\label{eq:AP-OTD adj reg5}
\end{alignat}
\end{subequations}
Here,
the partial derivative of $\bar F_\gamma$ with
respect to $h$ is defined as
\begin{equation*}
  \partial_h \bar F_{\gamma}:=\frac{1}{T_F}\exp\left\lbrace \frac{1}{\gamma}\left[ \fint_c^d(h+B_0)dx-\bar F_\gamma\right]  \right\rbrace,\, \text{on } [c,d]\times [0,T_F],
\end{equation*}
and $\partial_h \bar F_{\gamma}:=0$ else.  When
solving the adjoint system \eqref{eq:AP-OTD adj reg}, the
state variables $(v,h,\varphi)$ are know and we only solve for the
adjoint variables $(p,w,\psi)$, which appear linear in
\eqref{eq:AP-OTD adj reg}. Note that due to \eqref{eq:AP-OTD adj
  reg4}, this is a final value problem that must be solved backwards in
time. Once the state and the adjoint variables are know, one can
obtain the derivative $\mathcal G(B)(\hat B)$ of $J_{\gamma,\lambda}$
in an arbitrary direction $\hat{B} = O\hat S$ as the variation of the Lagrangian
with respect to $B=B_0+OS$ in that direction, i.e.
\begin{equation}
\label{eq:AP-OTD gradient}
\mathcal{G}(B)(\hat{B})=
\langle\!\!\langle  B-B_0, \hat{B}\rangle\!\!\rangle_{C_s}
+\int_{0}^{T_F}\int_{\mathcal{D}}ghw \hat{B}_x dxdt = S^\top C_s^{-1} \hat S
+\int_{0}^{T_F}\int_{\mathcal{D}}ghw (O\hat S)_x dxdt.
\end{equation}

\paragraph{Time-optimal objective}
For the time-optimal problem \eqref{eq:AP-min time-opt}, additionally to
the derivative with respect to $B$, we require derivatives with
respect to the observation time $t$.
Again, we skip details here---optimization over time or
time-optimal control is a challenging research topic by
itself \cite{Fattorini05,KunischRund15}.

The main difference between $J_{\lambda}$ and $J_{\gamma,\lambda}$ is
that in the latter, $\bar F_\gamma(h,v;B)$ is replaced by
$f^{ob}(h,v;B,t)$. Thus, one obtains the adjoint equations for the
time optimal problem \eqref{eq:AP-min time-opt} by replacing 
$\partial_h \bar F_{\gamma}$ in \eqref{eq:AP-OTD adj reg} with the
derivative of $f^{ob}$ with respect to $h$, i.e.,
$\partial_h f^{ob}:=
{1/|d-c|}$ on $[c,d]\times[0,T_F]$ and $\partial_h f^{ob}:=0$ else.
Additionally, the final time conditions becomes
$p(x,T_F)=
\lambda/{|d-c|}$ for $x\in[c,d]$ and $p(x,T_F)=0$ else.
Since $F_\gamma$ and $f^{ob}$ do not depend explicitly on $B$, the
gradient of $J_{\lambda}$ %
is identical to \eqref{eq:AP-OTD gradient}.

Finally, we require the derivative of $J_\lambda$
with respect to the observation time $t$. A short
computation yields that
\begin{equation}
\label{eq:AP-J_T}
\frac{\partial}{\partial t}J_{\lambda}(B,t)
=-\lambda \frac{\partial}{\partial t}f^{ob}(h,v;B,t)%
=-\lambda \fint_c^d \frac{\partial}{\partial t}h(x,t)dx
=\lambda \fint_c^d \frac{\partial}{\partial x}v(x,t) dx,
\end{equation}
where the last identity follows from the conservation-of-mass equation
$h_t+v_x=0$.

\subsection{Discretization of adjoint equations and gradient}\label{subsec:adj-discrete}
When shocks occur in the state equations, this may lead to
discontinuous coefficients in the adjoint equations.  Thus, the theory
and grid convergence of adjoint-based gradients for hyperbolic systems
is challenging and rigorous results are rare. The authors of
\cite{giles2010convergence} study the grid convergence of the adjoint
solutions for Burger's equation, and find that solutions of the finite
difference-discretized equation may converge to a wrong continuous
solution when the state solution has shocks. To smooth out shocks that
cannot be resolved by the mesh, they propose adding artificial
viscosity that vanishes at a certain rate as the mesh is refined. The
result on the required rate has been improved recently
\cite{ulbrich2019numerical}.
As discussed in \cref{subsec:discretization}, we follow a similar
strategy in the context of a discontinuous Galerkin discretization for
the shallow water equations.

To discretize the adjoint equations and the gradient expressions
from the previous section, we follow a
discretize-then-optimize approach, i.e., we first discretize the
optimization objective and the governing equations in space and time,
and then compute discrete derivatives. This means that the
discretization of the adjoint equation is implied by that of the state
equation. An alternative would be the optimize-then-discretize
approach, which discretizes the continuous adjoint equation
independently. While more convenient, this may result in inconsistent
gradients, i.e., numerically gradients that are not exact gradients of
any discrete (or continuous) problem. Both approaches have their
advantages and disadvantages, but here we follow the former approach,
i.e., discretize the problem and then compute the corresponding
adjoint-based gradient. In the previous section we nevertheless
presented the continuous adjoint equations to show and discuss their
structure.  We suppress the (interesting)
technical details of the following computations for space reasons, and only summarize the
results.

Following this discretize-then-optimize approach, we find that the
adjoint of the spatial DG-discretization of \eqref{eq:AP-PDE vis} is
again a DG discretization of the continuous adjoint equations,
extending results in \cite{wilcox2015discretely} to nonlinear
conservation laws. The induced flux in the adjoint equations is a
modified global Lax-Friedrichs flux.
We follow the same discretize-then-optimize approach for the
Runge-Kutta time discretization.  Results in \cite{hajian2019total}
show that the SSP property for the state equation ensures stability of
the discrete adjoint time-stepping scheme. While the adjoint
time-adjoint method does not coincide with the SSP-RK2 scheme, it is
also a second-order scheme that preserves stability.  Since the
regularized objective $\bar F_\gamma$ involves integration over time and we
use the quadrature induced by SSP-RK2 for its discretization.  The
bathymetry $B$ is discretized using linear continuous finite
elements. The embedding of linear continuous to discontinuous elements
as needed in \eqref{eq:AP-PDE vis} is trivial, and the adjoint of this
embedding is used to transfer the gradient from the discontinuous to
the continuous space.

Due to the use of a DG scheme and the
discretize-then-optimize approach, the gradient expressions include
additional terms at element interfaces, as observed for linear
problems \cite{wilcox2015discretely}. These additional terms
vanish in the limit as the mesh is refined, but they must be included to
obtain exact gradients of the discretized problem. To avoid the
technical derivations, we only present the continuous forms of the
gradient in \eqref{eq:AP-OTD
  gradient}. We verify the correctness of our gradient implementation,
by comparing directional derivatives with their finite differences
approximations. Due to the discretize-then-optimize approach, they
coincide not only for physics-resolving, but also for coarse
meshes up to what can be expected in the presence of machine round-off.

\section{Results for tsunami problem}\label{sec:results}
Here, we study the convergence behavior of the proposed
algorithms and approximations. We also discuss qualitative results
such as the bathymetry change resulting in the most extreme tsunami
event and extreme event probabilities. First, we discuss the numerical
solution of the LDT optimization problems.

\subsection{Shallow water equation-constrained optimization}
To compute minimizers for \eqref{eq:LDT-min}, we need to solve the
PDE-constrained optimization problems \eqref{eq:AP-min regularized}
and \eqref{eq:AP-min time-opt}. We use the adjoint method discussed
in \cref{sec:adjoint} to compute gradients and use a preconditioned
steepest descent method for the optimization.  Backtracking line search
using the Armijo rule \cite{nocedal2006numerical} is used for
globalization of the descent algorithm.  We precondition the gradient
with the covariance matrix.

In \cref{tab:ite}, we present iterations numbers for different values
of $\lambda$, as well as the corresponding extreme event values and
probability estimates based on the second-order approximation
discussed in \cref{sec:SORM}.  For each $\lambda$, we take the
reference bathymetry $B_0$ as the starting point for the
optimization.
We observe in \cref{tab:ite} that
the iteration numbers are generally insensitive to $\lambda$ for both the regularized and the time-optimal problem. Since larger $\lambda$'s
correspond to extremer events, we find in particular that the
number of iterations is independent of the extremeness of events.
This is a desirable property that often does not hold for
sampling-based methods.

\begin{table}[tb] %
	{\footnotesize
		\caption{Number of iterations for different
                  $\lambda$'s, for optimization with regularized
                  objective $F_\gamma$ in \eqref{eq:AP-min
                    regularized} with $\gamma=0.003$, and with
                  time-optimal objective \eqref{eq:AP-min
                    time-opt}. The iteration is terminated when the
                  $\invC$-weighted norm of the gradient is reduced by
                  5 orders of magnitude. Shown are also the values of
                  $z=z(\lambda)$ and the event probability estimate
                  computed using a second-order approximation of
                  $\varOmega(z)$.
                }\label{tab:ite}
		\begin{center}
			\begin{tabular}{|c||c|c|c||c|c|c|} \hline
			\multirow{2}{*}{$\lambda$}	 &\multicolumn{3}{c||}{Regularized objective $F_\gamma$} & \multicolumn{3}{c|}{Time-optimal problem} \\ 
		\cline{2-7}	\\[-1.2em] 
			& $z:=F_\gamma(B^\star(\lambda))$ & $P^{SO}(z)$&\# iter& $z:=F(B^\star(\lambda))$ & $P^{SO}(z)$&\# iter\\ 	 \hline
				12 & 0.263  &4.80e-02 & 23& 0.281  & 4.70e-02 & 35\\
				16 & 0.364 & 9.55e-03 &31& 0.382  &9.36e-03   &27\\
				20& 0.468 &1.24e-03 &24& 0.486 &1.22e-03  &20\\
				24& 0.574 &1.04e-04 &31& 0.592 &1.02e-04  &20\\
				28& 0.682&5.45e-06 &27& 0.701  &5.33e-06  &30\\
				32& 0.792&1.77e-07 & 33& 0.811 &1.73e-07 & 27\\
				36&0.905& 3.54e-09 &29&0.923 & 3.45e-09  &34\\
				40& 1.018&4.27e-11 &32& 1.037 &4.17e-11 &38\\
				44& 1.134 &3.09e-13 &30& 1.152 &3.02e-13  &30\\
				48&1.250 &1.36e-15 &37 &1.269 &1.26e-15 &35 \\ \hline
			\end{tabular}
		\end{center}
	}
\end{table}

\begin{figure}[tbp]\centering
	\begin{tikzpicture}[]
	 \begin{groupplot}[group style = {columns=2, horizontal sep = 10pt}, width = 5.5cm, height = 5.0cm]
	 \nextgroupplot[compat=1.3, scale only axis,
	 xlabel={Distance to shore [km]},
	 ylabel={Optimal $B-B_0$ [m]},
	  legend style = {font=\tiny,nodes=right, legend to name=grouplegend},legend columns=4, 
	 xmin=165,xmax=195]
	\addplot [color=\cOne, line width=1]
	table[x expr=\thisrowno{0}*0.001,y=B] {\data/Toho_TmOpt_optsol12std10nos20.txt};
	\addlegendentry{Time-opt, $\lambda=12$\phantom{00}}
	\addplot [color=\cTwo, mark=none, mark size=2pt, line width=1]
	table[x expr=\thisrowno{0}*0.001,y=B] {\data/Toho_TmOpt_optsol24std10nos20.txt};
	\addlegendentry{Time-opt, $\lambda=24$\phantom{00}}
	\addplot [color=\cThree, mark=none, mark size=2pt, line width=1]
	table[x expr=\thisrowno{0}*0.001,y=B] {\data/Toho_TmOpt_optsol36std10nos20.txt};
	\addlegendentry{Time-opt, $\lambda=36$\phantom{00}}
	\addplot [color=\cFour, mark=none, mark size=2pt, line width=1]
	table[x expr=\thisrowno{0}*0.001,y=B] {\data/Toho_TmOpt_optsol48std10nos20.txt};
	\addlegendentry{Time-opt, $\lambda=48$\phantom{00}}
	\addplot [color=black!100, mark=none, mark size=2pt, line width=.5,
	densely dotted]
	table[x expr=\thisrowno{0}*0.001,y=B] {\data/Toho_Approx_optsol12std10nos20.txt};
	\addlegendentry{$\gamma$-reg, $\lambda=12$}
	\addplot [color=black!100, mark=none, mark size=2pt, line
	width=0.5, densely dashed]
	table[x expr=\thisrowno{0}*0.001,y=B] {\data/Toho_Approx_optsol24std10nos20.txt};
	\addlegendentry{$\gamma$-reg, $\lambda=24$}
	\addplot [color=black!100, mark=none, mark size=2pt, line
	width=.5, dotted]
	table[x expr=\thisrowno{0}*0.001,y=B] {\data/Toho_Approx_optsol36std10nos20.txt};
	\addlegendentry{$\gamma$-reg, $\lambda=36$}
	\addplot [color=black!100, mark=none, mark size=2pt,
          line width=.5,densely dashdotted]
	table[x expr=\thisrowno{0}*0.001,y=B] {\data/Toho_Approx_optsol48std10nos20.txt};
	\addlegendentry{$\gamma$-reg, $\lambda=48$}
\nextgroupplot[scale only axis,
	xlabel={Distance to shore [km]},
	ylabel={Optimal slip [m]}, ,ylabel near ticks, yticklabel pos=right,
	 xmin=178, xmax=187
]
	\addplot+[jump mark left,mark =none] [color=\cOne, line width=1]
	table[x expr=\thisrowno{0}*0.001,y=slip] {\data/Toho_TmOpt_slip12std10nos20.txt};
	\addplot+[jump mark left,mark =none] [color=\cTwo, line width=1]
	table[x expr=\thisrowno{0}*0.001,y=slip] {\data/Toho_TmOpt_slip24std10nos20.txt};
	\addplot+[jump mark left,mark =none] [color=\cThree, line width=1]
	table[x expr=\thisrowno{0}*0.001,y=slip] {\data/Toho_TmOpt_slip36std10nos20.txt};
	\addplot+[jump mark left,mark =none] [color=\cFour, line width=1]
	table[x expr=\thisrowno{0}*0.001,y=slip] {\data/Toho_TmOpt_slip48std10nos20.txt};
	\addplot+[jump mark left,mark =none] [color=black!100, mark=none, mark size=2pt, line width=.5,
	densely dotted]
	table[x expr=\thisrowno{0}*0.001,y=slip] {\data/Toho_Approx_slip12std10nos20.txt};
	\addplot+[jump mark left,mark =none] [color=black!100, mark=none, mark size=2pt, line width=.5,
	densely dashed]
	table[x expr=\thisrowno{0}*0.001,y=slip] {\data/Toho_Approx_slip24std10nos20.txt};
	\addplot+[jump mark left,mark =none] [color=black!100, mark=none, mark size=2pt, line width=.5,
	dotted]
	table[x expr=\thisrowno{0}*0.001,y=slip] {\data/Toho_Approx_slip36std10nos20.txt};
		\addplot+[jump mark left,mark =none] [color=black!100, mark=none, mark size=2pt, line width=.5,
		densely dashdotted]
	table[x expr=\thisrowno{0}*0.001,y=slip] {\data/Toho_Approx_slip48std10nos20.txt};
	\end{groupplot}
\node[black] at ($(group c2r1) + (-3.0cm,3.0cm)$) {\pgfplotslegendfromname{grouplegend}}; 
	\end{tikzpicture}
	\caption{Shown on the left are optimal bathymetry changes of
          LDT-solutions $B^\star$ for different $\lambda$'s (time
          optimal and regularized objective $F_\gamma$ with
          $\gamma=0.003$). For fixed $\lambda$, the optimizers of the
          two problems are quite similar, showing that the
          approximation of the $\max$-function with $F_\gamma$ is
          quite effective. Shown on the right are fault slips
          corresponding to the optimal solutions $B^\star$ 
          for different $\lambda$'s as discussed in \cref{subsec:tsunamiB}.}\label{fig:opt}
\end{figure}
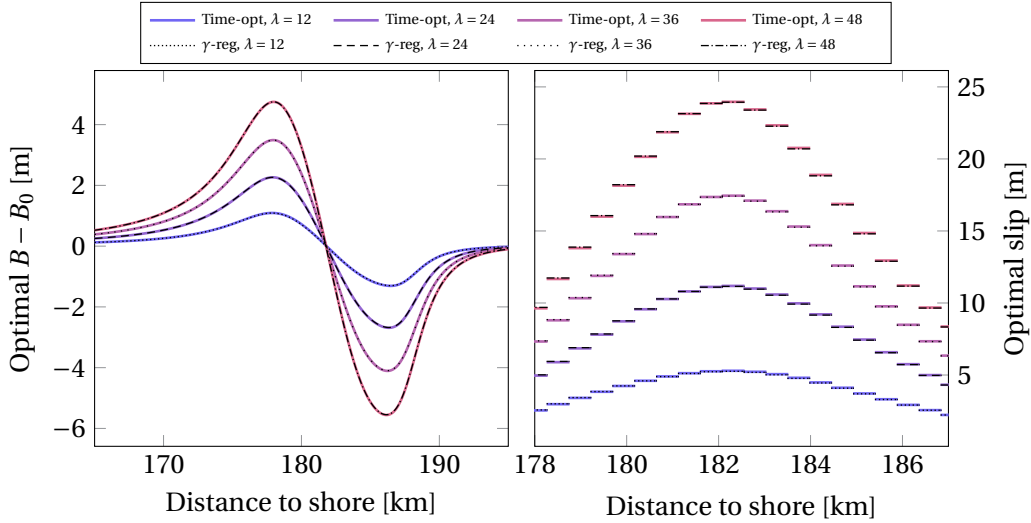

\Cref{fig:opt} shows the optimal bathymetry changes $B^\star-B_0$ for
different values of $\lambda$, and thus different extreme event
thresholds $z$. We show results for the regularized and the
time-optimal formulations \eqref{eq:AP-min regularized} and
\eqref{eq:AP-min time-opt}. Since $\gamma$ is chosen rather small,
there is visually 
little difference between the optimizers found with these different
formulations.  As can be seen, the most effective mechanism for large
tsunamis on shore involves an uplift in the shore-facing part and a
downlift away from the shore. The corresponding slips generating these
bathymetry changes can be seen on the right in \Cref{fig:opt}. The 20 slip
patches all move in the same direction and the slip is larger in the
middle than at the sides of the slip area. Since tsunami
waves interact with the bathymetry, these optimal patters depend, at
least to some degree, on the structure of the bathymetry and the
location where the event is observed.

Note that optimizers for different $\lambda$ have a similar structure
but their magnitude varies with the extremeness of the
event.  To explain these magnitude differences, recall that the rate
function $I$ is quadratic. If the parameter-to-event map $F$ were
linear, then the LDT minimizer would increase linearly with $\lambda$
as can be seen from the optimality conditions of such a quadratic
optimization objective. Deviations from that scaling are a result of
the nonlinearity in the parameter-to-event map caused by the
nonlinearity of the shallow water equation and the extreme event
objective. Since this deviation is small, we deduce that the
problem is moderate nonlinear. This (together with the results
presented in the subsequent \cref{sec:UQresults}) indicates a
posteriori that the assumptions needed for our LDT theory are likely
satisfied in this problem.

\subsection{Eigenvalue estimation for second-order approximation $P^{SO}(z)$} \label{sec:low-rank} As discussed in
\cref{sec:SORM}, computing the prefactor using \eqref{eq:PE-SORM
  Gauss} requires estimation of the eigenvalues of the Hessian of the
parameter-to-observable map, preconditioned with the covariance of the
Gaussian parameter distribution, i.e.,
$(OC^{1/2}_s)^\top\nabla^2_BF(B^\star)O C^{1/2}_s$.  Here, we study
the feasibility of this approach for the tsunami problem. In these
numerical tests we approximate the Hessian-application
using finite differences of gradients.

As discussed in \cref{subsec:tsunamiB}, the random parameter $B$ is
modeled using 20 slips at the fault boundary below the ocean floor.
Thus, and due to typical properties of covariance matrices, we argued in
\cref{subsec:low-rankSORM}  that the eigenvalues of this
preconditioned Hessian decay rapidly.
To verify this numerically, we compute the eigenvalues of
preconditioned Hessians for different $\lambda$'s and multiply them by
$\lambda$ as in \cref{thm:SO}.
The results for the tsunami problem are shown in \Cref{fig:lambda*eig std10nos20}.
It can be seen that the eigenvalues decay rapidly and this behavior
barely changes with the extremeness of the event. This shows that it
is sufficient to use a small number of dominating eigenvalues in the
second-order approximation. However, the largest value of about 0.5
indicates non-negligible nonlinearity of the parameter-to-event map
$F$. If $F$ were linear, all eigenvalues would be zero. In addition,
we find that all leading eigenvalues are positive, indicating that $F$
is convex in all leading directions close to the LDT-minimizers.
This results in a larger-than-one multiplicative SORM-correction term
\cref{eq:PE=SORM k_i}. Thus, the probability estimate from the
first-order approximation is smaller than the estimate from  the
second-order approximation.

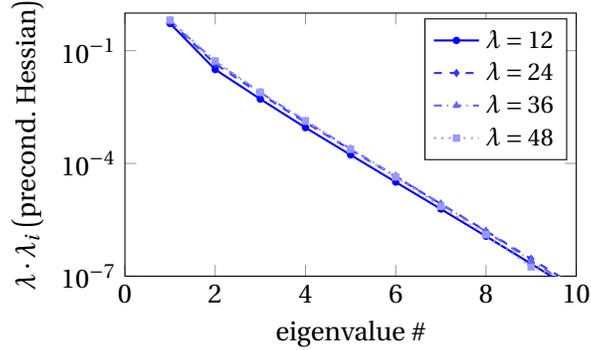
\begin{figure}[tb]\centering
	\begin{tikzpicture}[]
	\begin{semilogyaxis}[compat=newest, width=6cm, height=3.5cm, scale only axis,
	xlabel=eigenvalue \#,
	ylabel=$\lambda\cdot\lambda_i\left( \text{precond.\ Hessian}\right) $,
	legend style={font=\small,nodes=right}, xmin=0, xmax=10,
        ymax=1, ymin=1e-7]
	  \addplot [color=blue!100!white, mark=*, mark size=1pt,
            thick, mark options={solid}]
	table[x=x,y=y] {\data/lambda_eig_lambda12.txt};
	\addlegendentry{$\lambda=12$}
	\addplot [color=blue!80!white, mark=diamond, mark size=1pt,
          dashed, thick, mark options={solid}]
	table[x=x,y=y] {\data/lambda_eig_lambda24.txt};
	\addlegendentry{$\lambda=24$}
	\addplot [color=blue!60!white, mark=triangle*, mark size=1pt,
          dashdotted, thick, mark options={solid}]
	table[x=x,y=y] {\data/lambda_eig_lambda36.txt};
	\addlegendentry{$\lambda=36$}
	\addplot [color=blue!40!white, mark=cube*, mark size=1pt,
          dotted, thick, mark options={solid}]
	table[x=x,y=y] {\data/lambda_eig_lambda48.txt};
	\addlegendentry{$\lambda=48$}
	\end{semilogyaxis}
	\end{tikzpicture}
	\caption{Shown are the dominating
          eigenvalues of the preconditioned Hessian multiplied with
          the corresponding $\lambda$ defined in \cref{thm:SO} for
          various values of $\lambda$. The eigenvalues that are small
          compared to 1 have little influence on $P^{SO}(z)$, i.e.,
          computation of about 5 eigenvalues is sufficient in our
          example.
          Note that the rapid decay is insensitive to $\lambda$, and thus
          to how extreme the event is.} \label{fig:lambda*eig std10nos20}
\end{figure}

\begin{figure}[tbhp]\centering
  \begin{tikzpicture}[spy using outlines=
      {rectangle,lens={scale=2}, size=8cm, connect spies}]
	\begin{semilogyaxis}[compat=newest, width=8cm, height=5.5cm, scale only axis,
	xlabel={Threshold $z$ [m]},
	ylabel=Probability $\mathbb{P}(F_\gamma\geq z)$,
	xmin=0, xmax=1.2, ymin=1e-14,ymax=1,
	legend style={font=\tiny,nodes=right}]
          \addplot [color=\cMC, line width=1]
	table[x=zz,y=mean_prob] {\data/Toho_Approx_meanstd10nos20.txt};
	\addlegendentry{MC sampling}
	\addplot [name path= MCCI1,color=\cMCci, line
          width=0.8,dashed,forget plot]
	table[x=zz,y=mean_prob_CI] {\data/Toho_Approx_mean_CIstd10nos20.txt};
	\addplot [name path= MCCI2,color=\cMCci, line width=0.8,dashed]
	table[x=zz,y=mean_prob_CI] {\data/Toho_Approx_mean_CI2std10nos20.txt};
	\addlegendentry{MC 95\% CI}
	\addplot [color=\cFit, mark=triangle*, mark size=1.5pt,
          line width=0.5, mark options={solid}]
	table[x=F,y=prob] {\data/Toho_Approx_fitstd10nos20.txt};
	\addlegendentry{Fitting of $\exp(-I)$}
	\addplot [color=\cFORM, mark=square*, mark size=1.5pt, line
          width=0.5, mark options={solid}]
	table[x=F,y=prob] {\data/Toho_Approx_FORMstd10nos20.txt};
	\addlegendentry{First-order approx.}
	\addplot [color=\cSORM, mark=*, mark size=1.5pt,
          line width=0.5, mark options={solid}]
	table[x=F,y=prob] {\data/Toho_Approx_SORMstd10nos20.txt};
	\addlegendentry{Second-order approx.}
		\addplot [color=cyan, mark=none, mark size=1.5pt,
	line width=0.8, mark options={solid}, densely dotted]
	table[x=F,y=prob] {\data/Toho_Approx_Linearstd10nos20.txt};
	\addlegendentry{Linearized $F_\gamma$}
        \coordinate (spypoint) at (0.61,3e-5);
        \coordinate (magnifyglass) at (0.24,1e-10);
	\end{semilogyaxis}
        \spy [gray, size=2.5cm] on (spypoint)
        in node[fill=white] at (magnifyglass);
  \end{tikzpicture}
  \caption{Comparison of probability estimation for regularized
    objective $F_\gamma$ \eqref{eq:AP-F regularized} with
    $\gamma=0.003$. Shown in blue are the mean and 95\% confidence
    intervals obtained with standard MC with \numosamples samples
    (discussed in \cref{sec:MC}), in purple results obtained by
    fitting the asymptotic LDT rate with the MC mean
    (\cref{sec:fitting}), and results using first-order and
    second-order approximation of $\varOmega(z)$
    (\cref{sec:FORM,sec:SORM}) in red and yellow, respectively. Each
    marker represents the solution of an LDT optimization problem with
    a different value of $\lambda$. The zoom-in shows the regime where
    the variance of the standard MC sampling method increases and
    standard MC sampling becomes infeasible. For comparison, the cyan
    dotted line shows the probabilities
    obtained by linearization of $F_\gamma$ at the optimizer
    $B^\star$ for $\lambda=12$.
  }\label{fig:prob reg}
\end{figure}

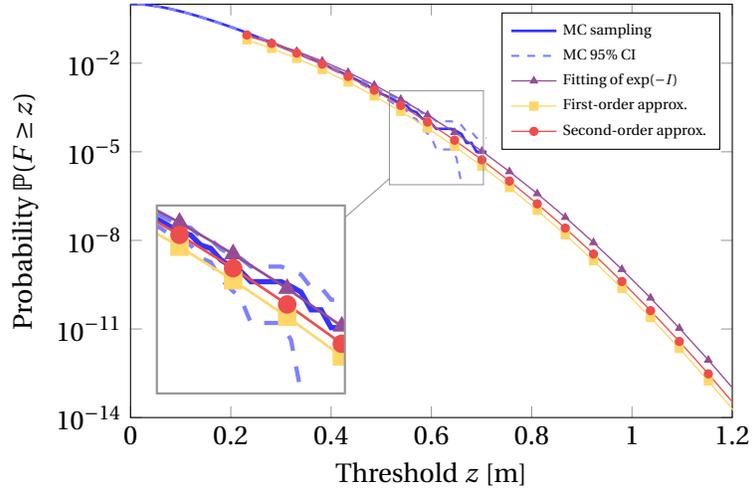
\begin{figure}[tbhp]\centering
	\begin{tikzpicture}[spy using outlines={rectangle,lens={scale=2}, size=8cm, connect spies}]
	\begin{semilogyaxis}[compat=newest, width=8cm, height=5.5cm, scale only axis,
	xlabel={Threshold $z$ [m]},
	ylabel=Probability $\mathbb{P}(F\geq z)$,
xmin=0, xmax=1.2, ymin=1e-14,ymax=1,
	legend style={font=\tiny,nodes=right}]
	\addplot [color=\cMC, line width=1]
	table[x=zz,y=mean_prob] {\data/Toho_TmOpt_meanstd10nos20.txt};
	\addlegendentry{MC sampling}
	\addplot [color=\cMCci, line width=0.8,dashed,forget plot]
	table[x=zz,y=mean_prob_CI] {\data/Toho_TmOpt_mean_CIstd10nos20.txt};
	\addplot [color=\cMCci, line width=0.8,dashed]
	table[x=zz,y=mean_prob_CI] {\data/Toho_TmOpt_mean_CI2std10nos20.txt};
	\addlegendentry{MC 95\% CI}
		\addplot [color=\cFit, mark=triangle*, mark size=1.5pt,
	line width=0.5, mark options={solid}]
	table[x=F,y=prob] {\data/Toho_TmOpt_fitstd10nos20.txt};
	\addlegendentry{Fitting of $\exp(-I)$}
\addplot [color=\cFORM, mark=square*, mark size=1.5pt, line
width=0.5, mark options={solid}]
	table[x=F,y=prob] {\data/Toho_TmOpt_FORMstd10nos20.txt};
	\addlegendentry{First-order approx.}
	\addplot [color=\cSORM, mark=*, mark size=1.5pt,
line width=0.5, mark options={solid}]
	table[x=F,y=prob] {\data/Toho_TmOpt_SORMstd10nos20.txt};
	\addlegendentry{Second-order approx.}
	        \coordinate (spypoint) at (0.61,3e-5);
	\coordinate (magnifyglass) at (0.24,1e-10);
	\end{semilogyaxis}
	\spy [gray, size=2.5cm] on (spypoint)
	in node[fill=white] at (magnifyglass);
	\end{tikzpicture}
	\caption{Same as \cref{fig:prob reg}, but for time-optimal objective
          $F$ defined in \eqref{eq:AP-F}.
        }\label{fig:prob tim}
\end{figure}

\subsection{Comparison of extreme event quantification methods}\label{sec:UQresults}

In this section, we compare the proposed extreme event estimation
methods for the Tohoku-Oki tsunami.
In the \Cref{fig:prob reg,fig:prob tim}, we compare the results of
Monte Carlo sampling with the LDT approaches (constant prefactor
estimated by fitting with MC data, the first and
second-order approximation of the set $\varOmega(z)$) for both the
regularized objective problem \eqref{eq:AP-min regularized} and the
time optimal problem \eqref{eq:AP-min time-opt}. The reference
probability for moderately extreme events is computed with Monte Carlo
sampling with \numosamples samples using the estimator $P^{MC}_N(z)$
in \eqref{eq:PE-MC estimate P^MC}. This procedure is clearly very
costly in particular when one is interested in extreme events. We also
show the $95\%$ confidence interval for the estimator, which is tight
for $z < 0.4$. However, the Monte Carlo estimator $P^{MC}_N(z)$ only
provides acceptable accuracy for a probability down to about
$10^{-4}$. We also use the LDT logarithmic rate with a constant prefactor as
discussed in \cref{sec:fitting},
fitting the Monte Carlo results in the interval $z\in[0.2,0.4]$.
The resulting estimate seems  to overestimate the
extreme event probability. It also requires MC sampling for estimating
the fitting constant. The first and second-order approximation of
$\varOmega(z)$ do not require fitting since they rely only on the
LDT-optimizers and the local derivative information around the
optimizers. The first-order approximation results in \Cref{fig:prob
  reg,fig:prob tim} are below the Monte Carlo estimator, showing that
significant parts of $\varOmega(z)$ are not contained in the half-space
$\mathcal{H}(z)$. The second-order approximation results in
\Cref{fig:prob reg,fig:prob tim} are closer to the MC estimator,
indicating that the second-order approximation of $\varOmega(z)$
describes the set $\varOmega(z)$ well. All approaches provide
probability estimates down to $10^{-14}$. Comparing the results in
\Cref{fig:prob reg,fig:prob tim} shows that there is little
difference between the time-optimal formulation and the regularization
formulation with $\gamma=0.003$.
In \Cref{fig:prob reg}, we additionally show the extreme event
probabilities computed using a linear parameter-to-event map, namely
$F_\gamma$ linearized around $B^\star$, the LDT-optimizer for
$\lambda=12$. When the parameter-to-event map is linear, the extreme
event set is a half-space over which we can integrate the rate
function exactly. The resulting values shown in \Cref{fig:prob reg}
underestimate the extreme event probability and results in an
incorrect asymptotic rate. This highlights the role of the
nonlinearity in the parameter-to-event map.

\begin{figure}[tbhp]\centering
	\begin{tikzpicture}[spy using outlines={rectangle,lens={scale=2}, size=8cm, connect spies}]
	\begin{semilogyaxis}[compat=newest, width=8cm, height=5.5cm, scale only axis,
	xlabel={Threshold $z$ [m]},
	ylabel=Probability $\mathbb{P}(F_\gamma\geq z)$,
	xmin=0, xmax=1.2, ymin=1e-14,ymax=1,
	legend style={font=\tiny,nodes=right}]
	\addplot [color=\cMC, line width=1]
	table[x=zz,y=mean_prob] {\data/Toho_Approx_meanstd10nos20.txt};
	\addlegendentry{MC sampling}
	\addplot [color=\cMCci, line width=0.8,dashed, forget plot]
	table[x=zz,y=mean_prob_CI] {\data/Toho_Approx_mean_CIstd10nos20.txt};
	\addplot [color=\cMCci, line width=0.8,dashed]
	table[x=zz,y=mean_prob_CI] {\data/Toho_Approx_mean_CI2std10nos20.txt};
	\addlegendentry{MC 95\% CI}
	\addplot [color=\cSORM, mark=*, mark size=1.5pt,
line width=0.5, mark options={solid}]
		table[x=F,y=prob] {\data/Toho_Approx_SORMstd10nos20.txt};
		\addlegendentry{Second-order approx.}
	\addplot [color=\cIS, mark=none, mark size=3pt, line width=1,densely dashdotted]
	table[x=zz,y=mean_prob] {\data/Toho_Approx_IS_meanstd10nos20.txt};
	\addlegendentry{Importance sampling}
	\addplot [color=\cISci, line width=0.8,densely dotted, forget plot]
	table[x=zz,y=mean_prob_CI] {\data/Toho_Approx_IS_mean_CIstd10nos20.txt};
	\addplot [color=\cISci, line width=0.8,densely dotted]
	table[x=zz,y=mean_prob_CI] {\data/Toho_Approx_IS_mean_CI2std10nos20.txt};
	\addlegendentry{IS 95\% CI}
        \coordinate (spypoint) at (0.61,3e-5);
\coordinate (magnifyglass) at (0.24,1e-10);
\end{semilogyaxis}
\spy [gray, size=2.5cm] on (spypoint)
in node[fill=white] at (magnifyglass);
\end{tikzpicture}
	\caption{Comparison of estimation using IS for regularized
          objective $F_\gamma$ with $\gamma=0.003$.  In green we show
          the mean and 95\% confidence intervals obtained with IS. The
          results obtained with standard MC sampling and second-order
          approximation of $\varOmega(z)$ are as in \cref{fig:prob
            reg} and shown for comparison.  For IS, the same LDT
          minimizers for different values of $\lambda$ as for the
          second-order approximation are used. We use 100 samples for
          each LDT-optimizer to estimate the probability following
          \eqref{eq:PE-P(z) IS}. For other values of $z$, we use the
          samples at the nearest minimizer to estimate the
          probability.  As can be seen, the IS results align well with
          the results from the second-order
          approximation.}\label{fig:IS reg}
\end{figure}
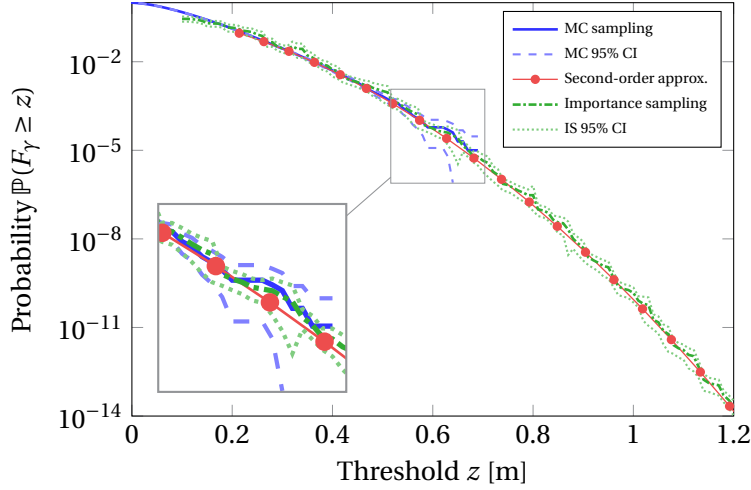

\begin{figure}[tbhp]\centering
	\begin{tikzpicture}[
	]
	\begin{axis}[compat=newest, width=8cm, height=5.5cm, scale only axis,
	xlabel={Threshold $z$ [m]},
	ylabel=Estimated prefactor $C_0(z)$,
	legend style={font=\tiny,nodes=right}]
	\addplot [color=\cMC, line width=1]
	table[x=zz,y=mean_prob] {\data/C0_Approx_meanstd10nos20.txt};
	\addlegendentry{MC sampling}
	\addplot [color=\cFORM, mark=square*, mark size=1.5pt, line
	width=0.5, mark options={solid}]
	table[x=F,y=prob] {\data/C0_Approx_FORMstd10nos20.txt};
	\addlegendentry{First-order approx.}
	\addplot [color=\cSORM, mark=*, mark size=1.5pt,
	line width=0.5, mark options={solid}]
	table[x=F,y=prob] {\data/C0_Approx_SORMstd10nos20.txt};
	\addlegendentry{Second-order approx.}
		\addplot [color=\cIS, mark=diamond*, mark size=1.5pt, line width=0.5,
		]
	table[x=zz,y=mean_prob] {\data/C0_Approx_ISstd10nos20.txt};
	\addlegendentry{Importance sampling}
	\end{axis}
	\end{tikzpicture}
	\caption{Comparison of estimated prefactor for regularized
          objective $F_\gamma$ \eqref{eq:AP-F regularized} with
          $\gamma=0.003$ as also uses in \Cref{fig:prob reg,fig:IS
            reg}. Shown is the estimated prefactor
          $C_o(z)=\PP(F_\gamma\geq z)/\exp(-I(\thetastar(z)))$ with
          \numosamples samples of vanilla MC (blue), and estimations
          using an first-order and second-order approximation of
          $\varOmega(z)$ (\cref{sec:FORM,sec:SORM}) in red and yellow,
          respectively. Each marker represents the solution of an LDT
          optimization problem with a different value of
          $\lambda$. The green line shows the IS estimation of
          $C_0$. Since only $10^3$ samples are used for each
          optimizer, sampling error is still visible.
        }\label{fig:C0}
\end{figure}

The results obtained with IS are shown in \Cref{fig:IS reg}. For each
$\lambda$ also used in \Cref{fig:prob reg}, we use 100 samples from the
shifted distribution centered at the optimizer $B^\star$ to compute
\eqref{eq:PE-IS estimate P^IS Gaussian} at $z=z(\lambda)$, and in a
neighborhood. Note that IS based on the shift of the mean is efficient even 
for large $z$, which correspond to extreme events. Despite only
using 100 samples, we obtain tight $95\%$ confidence intervals.  We only
show the results for the regularized objective $F_\gamma$, but IS
applies analogously to $F$ using the time-optimal optimizers, and
we have obtained similar results. In particular, IS with 100 samples has
comparable accuracy as SORM in
\Cref{fig:prob tim}.

To make the comparison between the different methods easier, we
compare results obtained with different methods for estimating the
prefactor $C_0(z)$ in \Cref{fig:C0}. As can be seen, the second-order
approximation of $C_0(z)$ converges to the prefactor estimated using
IS as $z$ increases, which demonstrates that the second-order
approximation \eqref{eq:PE-SORM Gauss} is an asymptotic estimation of
the original probability $P(z)$ as discussed in \cref{sec:SORM}.
In contrast, the first-order estimation of $C_0(z)$ does not converges to
the IS estimated prefactor, demonstrating that the correction factor
computed by the second-order approximation is crucial. These
observations are consistent with our discussion in
\cref{sec:SORM,sec:FORM}.

\section{Discussions and conclusions}
In this paper, we use arguments from LDT to relate probability
estimation of extreme events to optimization problems. These
optimization problems typically involve solving a PDE, and thus we
apply the adjoint method to compute derivatives efficiently in a
manner that is independent of the parameter space
dimension. Additionally, we observe numerically that the number of
iterations required to solve these LDT optimization problems is
insensitive to the extremeness of the event.  If the underlying
parameter distribution is a multivariate Gaussian distribution, the
LDT-prefactor required for the probability estimate can be computed
using either (1) a second-order approximations of the extreme event
set boundary combined with a randomized SVD or (2) importance sampling
with a proposal centered at the LDT optimizer.  We observe that the
cost of these methods is either independent or depends only
weakly on how extreme the event is. Moreover, it is independent of the
discretization dimensions.  This is a significant improvement over MC
methods whose performance typically suffers from the parameter
dimension and the level of extremeness of the event.  Since the method
based on the second-order set approximation appears to be accurate and
does not require MC sampling, it might be a good candidate for
applications where the target is the control or mitigation of extreme
events.

Our main application is a 1D tsunami problem, which is a
simplification from realistic two-dimensional tsunamis.  It is
definitely interesting to expand this application to 2D.  The main
methods including the optimization formulation from LDT and the
approximation using first/second-order information will remain as in
1D. The main challenges are technical, i.e., modeling tsunami waves
and a realistic bathymetry in 2D, and deriving and implementing the
corresponding adjoint equations.

\appendix

\section{Examples of rate functions $I(\theta)$ for different
	distributions $\mu(\theta)$} \label{sec:rate-examples}
Here, we provide examples of the derivation of rate functions for different distributions.
\begin{example}[Multivariate normal distribution in $\RR^n$]\label{ex:gaussian}
	Consider a multivariate random variable $\theta\sim\mathcal{N}(\theta_0,C)$. The cumulant generating function $S(\eta)$ is 
	\begin{equation}
	\label{eq:LDT-Gaussian S}
	\begin{aligned}
	S(\eta)=&\log\int_{\varOmega}e^{\langle\eta,\theta\rangle}(2\pi)^{-\frac{n}{2}}(\det C)^{-\frac{1}{2}}e^{-\frac{1}{2}(\theta-\theta_0)^\top C^{-1}(\theta-\theta_0)}d\theta\\
	=&\log\left[e^{\eta^\top \theta_0+\frac{1}{2}\eta^\top C\eta}\cdot \int_{\varOmega}(2\pi)^{-\frac{n}{2}}(\det C)^{-\frac{1}{2}}  e^{-\frac{1}{2}(\theta-\theta_0-C\eta)^\top C^{-1}(\theta-\theta_0-C\eta)}d\theta\right] \\
	=&\log\left[e^{\eta^\top \theta_0+\frac{1}{2}\eta^\top C\eta}\cdot 1 \right] 
	=\eta^\top \theta_0+\frac{1}{2}\eta^\top C\eta.
	\end{aligned}
	\end{equation}
	Thus, the rate function $I(\theta)$ for a multivariate Gaussian distribution is 
	\begin{equation}
	\label{eq:LDT-Gaussian I}
	\begin{aligned}
	I(\theta)=&\max_{\eta\in\RR^n}\left(\eta^\top \theta- \eta^\top \theta_0-\frac{1}{2}\eta^\top C\eta\right)\\
	=&\left[C^{-1}(\theta-\theta_0)\right]^\top (\theta-\theta_0) -\frac{1}{2}\left[C^{-1}(\theta-\theta_0)\right]^\top C[C^{-1}(\theta-\theta_0)]
	=\frac{1}{2}\|\theta-\theta_0\|_{\invC}^2,
	\end{aligned}
	\end{equation}
	since the maximum is obtained at $\eta=C^{-1}(\theta-\theta_0)$.
	Thus, $I(\theta)$ is, up to a normalization constant, the
	negative log-probability density of $\theta$. Hence, for a Gaussian
	distribution, the LDT optimization problem \eqref{eq:LDT-instanton} is
	finding the most probable point, i.e., the point maximizing the
	log-density.
\end{example}

While in this paper we focus on finite dimensional random variables,
we show that the previous example generalizes to Gaussian random
fields.
\begin{example}[Gaussian random field]\label{ex:GRF}
	Assume that the parameter is a Gaussian random field
	$\theta(x)\Gauss{\theta_0(x)}{\mathcal{C}}$. Here, $\mathcal{C}$ is
	a trace-class covariance operator defined over a Hilbert space
	$\varOmega$. For instance, $\varOmega=L^2(\mathcal{D})$ for a
	physical domain $\mathcal{D}\subset \RR^n$, $n\in \{1,2,3\}$, and
	thus each sample $\theta$ is a real-valued function over
	$\mathcal{D}$.  An example for such a covariance operator is
	$\mathcal{C}=(-\Delta+\gamma I)^{-2}, \gamma>0$, with appropriate
	boundary conditions.  The parameter $\theta(x)$ has the
        Karhunen-Lo\`eve expansion
	$\theta(x)=\theta_0(x)+\sum_{j=1}^\infty\sqrt{\lambda_j}\xi_je_j(x),
	\ x\in \mathcal{D}$,
	where $\xi_j$ are independent standard normal variables
	$\xi_j\Gauss{0}{1}$, and $\lambda_j> 0$ 
	, $e_j$ are eigenvalues
	and orthonormal eigenfunctions of $\mathcal{C}$, i.e.,
	$\mathcal{C}e_j=\lambda_je_j$  \cite{le2010spectral}.
	Let $\eta\in \varOmega$, then
	$\langle\eta,\theta\rangle=\langle\eta,\theta_0\rangle+\sum_{j=1}^\infty\sqrt{\lambda_j}\xi_j\langle\eta,e_j\rangle$.
	For the cumulant generating function $S(\eta)$, we obtain
	\begin{equation*}
	\begin{aligned}
	S(\eta)=&\log\int_\varOmega e^{
		\langle\eta,\theta_0\rangle+\sum\limits_{j=1}^\infty\sqrt{\lambda_j}\xi_j\langle\eta,e_j\rangle}d\mu(\theta)
        = \log\left(e^{\langle\eta,\theta_0\rangle}
	\prod_{j=1}^\infty\int_\RR
	e^{\sqrt{\lambda_j}\xi_j\langle\eta,e_j\rangle }
	e^{-\frac{1}{2}\xi_j^2}d\xi_j \right)
	\\
	=&\langle\eta,\theta_0\rangle+\sum_{j=1}^\infty\log\int_\RR e^{\sqrt{\lambda_j}\xi_j\langle\eta,e_j\rangle}\frac{1}{\sqrt{2\pi}}e^{-\frac{1}{2}\xi_j^2}d\xi_j\\
	=&\langle\eta,\theta_0\rangle+\sum_{j=1}^\infty\log\left( e^{\frac{1}{2}\lambda_j\langle\eta,e_i\rangle^2}\int_\RR\frac{1}{\sqrt{2\pi}}e^{-\frac{1}{2}(\xi_j-\sqrt{\lambda_j}\xi_j\langle\eta,e_j\rangle)^2}d\xi_j\right)
	= \langle\eta,\theta_0\rangle+\sum_{j=1}^\infty\frac{1}{2}\lambda_j\langle\eta,e_j\rangle^2.
	\end{aligned}
	\end{equation*}
	The corresponding rate function $I(\theta)$  is
	\begin{equation*}
	\begin{aligned}
	I(\theta)=
	\max_{\eta\in\varOmega}\left[\<\eta,\theta\>-\left(\<\eta,\theta_0\>+\sum_{j=1}^\infty\frac{1}{2}\lambda_j\<\eta,e_j\>^2
          \right)  \right]
	\end{aligned}
	\end{equation*} 
	For any given $\theta$, the optimal $\eta$ for the above maximization problem should satisfy the first-order optimality condition, i.e., $\theta-\theta_0-\sum_{j=1}^\infty \lambda_j \<\eta,e_j\>e_j=0$. Thus, the maximum is obtained for $\eta = \sum_{j=1}^\infty \lambda_j^{-1} \<\theta-\theta_0,e_j\>e_j$. Plugging in this $\eta$ and using the facts that $\theta-\theta_0=\sum_{j=1}^{\infty}\<\theta-\theta_0,e_j\>e_j$
	and $\{e_j\}$ is an eigenfunction basis of
	$\mathcal{C}$, we obtain:
	\begin{equation*}
\begin{aligned}
I(\theta)
=&\<
\sum_{i=1}^\infty\frac{1}{\lambda_i}\<\theta-\theta_0, e_i\> e_i ,\theta\>-\left(\<\sum_{i=1}^\infty \frac{1}{\lambda_i}\<\theta-\theta_0, e_i\> e_i,\theta_0\>+\sum_{j=1}^\infty\frac{1}{2}\lambda_j\< \sum_{i=1}^\infty \frac{1}{\lambda_i}\<\theta-\theta_0, e_i\> e_i,e_j\>^2
\right)  \\
=& \frac{1}{2}\sum_{j=1}^\infty \frac{1}{\lambda_j} \<\theta-\theta_0,e_j\>^2
=\frac{1}{2}\|\theta-\theta_0\|^2_{\mathcal{C}^{-1}}.
\end{aligned}
\end{equation*} 
	 The above computations only hold for $\theta$
        such that all infinite sums converge. Otherwise, we define
        $I(\theta):=\infty$.
\end{example}

\begin{example}[Exponential distribution]\label{ex:exp}
	Consider a parameter $\theta$ with $n$ independent components
	$\theta_k$'s, each of which satisfies an exponential
	distribution with $\alpha_k>0$, i.e.,
	\begin{equation}
	\label{eq:LDT-exp prob}
	d\mu(\theta) = \prod_{k=1}^n\alpha_ke^{-\alpha_k\theta_k}d\theta_k \qquad \text{for } \theta_k \geq 0.
	\end{equation}
	The corresponding cumulant generating function $S(\eta)$ is
	\begin{equation}
	\label{eq:LDT-exp S}
	\begin{aligned}
	S(\eta)=\log\prod_{k=1}^n\int_0^\infty e^{\eta_k \theta_k}\alpha_ke^{-\alpha_k\theta_k}d\theta_k
	=-\sum_{k=1}^n\log\left(1-\frac{\eta_k}{\alpha_k}\right) \qquad  \text{for }\eta_k<\alpha_k.
	\end{aligned}
	\end{equation}
	The associated rate function is 
	\begin{equation}
	\label{eq:LDT-exp I}
	I(\theta)=\max_{\eta\in\RR^n, \eta_k<\alpha_k}\left[ \langle\eta,\theta\rangle+\sum_{k=1}^n\log\left(1-\frac{\eta_k}{\alpha_k}\right)\right] =\sum_{k=1}^n\left(\alpha_k\theta_k-1-\log\theta_k\right)\qquad \text{for } \theta_k>0,
	\end{equation}
	since the maximum is reached for
	$\eta_k=\alpha_k-1/\theta_k<\alpha_k$. Note that, unlike in
	the Gaussian case, $I(\theta)$ is not a multiple of the
	negative log-density. Rather, the rate function includes the
	additional terms $-1-\log(\theta_k)$ and thus a minimizer of
	the rate function $\theta^\star(z)$ might not maximize the
	density, i.e., be the most probably point.
\end{example}

\begin{example}[Other non-Gaussian distribution]
	For other non-Gaussian distributions, it may not be possible
	to derive an explicit form for the cumulant generating
	function $S(\eta)$ nor for the rate function $I(\theta)$. As a
	remedy, one could numerically approximate the rate function
	and its derivative. Alternatively, if available, one could use a mapping
	between a Gaussian distribution and the target distribution, and, for the
	LDT arguments discussed next, absorb that mapping into the
	definition of the parameter-to-event map $F$.
\end{example}

\section{Probability estimation using first-order approximation of $\varOmega(z)$}\label{sec:FORM}
In this approach, we integrate the measure $\mu(\theta)$ on the
first-order approximation of the set $\varOmega(z)$ to approximate
$P(z)$. In the engineering literature, a similar method is known as
first-order reliability method (FORM) \cite{du2001most}. We replace
$F(\theta)$ with the first-order Taylor expansions of $F(\theta)$ at
$\theta^\star$, i.e.,
\begin{equation}
\label{eq:PE-FORM F}
\begin{aligned}
F^{FO}(\theta):=F(\theta^\star(z))+\langle\dthetaF{\theta^\star(z)}, \theta-\theta^\star(z)\rangle,
\end{aligned}
\end{equation}
where $F(\theta^\star(z))=z$. Replacing the set $\varOmega(z)=\{\theta:F(\theta)\geq z \}$ with $\mathcal{H}(z):=\{\theta:F^{FO}(\theta)\geq z\}$, results in the half-space approximation $\mathcal{H}(z)$ of $\varOmega(z)$ defined in \eqref{eq:LDT-H(z)}, where $\nstar$ is the normal direction (parallel to $\dthetaF{\theta^\star}$). The corresponding first-order approximation of $P(z)$ is
\begin{equation}
\label{eq:PE-FORM}
\begin{aligned}
P^{FO}(z):=&\mu(\mathcal{H}(z))=\mu(\{\theta:\left\langle \nstar(z),\theta-\theta^\star(z)\right\rangle \geq0 \})\\
=&e^{-I(\theta^\star(z))}\int_{-\infty}^\infty
e^{-\|\eta^\star(z)\| s} \|\eta^\star(z)\| \mu_{\eta^\star(z)}\left( \mathcal{H}(z)\backslash\mathcal{H}(z,s)\right) \,ds,
\end{aligned}
\end{equation}
where the last equality follows from \eqref{eq:LDT-P(z) wrt G(z,s)}, $\mu_{\eta^\star(z)}$ is the tilted measure \eqref{eq:LDT-tilted measure}, and $\mathcal{H}(z,s)$ is the set defined in \eqref{eq:LDT-H(z,s)}. If the tilted measure on the strip $ \mathcal{H}(z)\backslash\mathcal{H}(z,s)$ is known explicitly, this allows to compute $P^{FO}(z)$.

For a multivariate Gaussian parameter, we can compute $P^{FO}(z)$ explicitly. First, we state an auxiliary result for the standard normal distribution.

\begin{lemma}[Measure of half-space for the standard normal distribution]\label{lm:FO}
	Assume given the standard normal parameter $\xi\Gauss{0}{I_n}$
	in $\RR^n$ with measure $\musn$, 
	$\xi^\star=\xisnorm\eone$ aligned with the first basis vector
	and the half-space $\Hxi:=\left\lbrace \xi:\left\langle
	\eone,\xi-\xistar \right\rangle \geq0 \right\rbrace $. Then,
	the measure $\musn(\Hxi)$ can be computed as
	\begin{equation}
	\label{eq:PE-FORM Gauss xi}
	\begin{aligned}
	\musn(\Hxi) =(2\pi)^{-1/2}\int_{\|\xistar\|}^{\infty}e^{-\frac{1}{2}s^2}ds\lesssim(2\pi)^{-1/2}}\dfrac{1}{\|\xistar\|}e^{-\frac{1}{2}\|\xistar\|^2,
	\end{aligned}
	\end{equation}
	where the asymptotic inequality holds for $\xisnorm\to\infty$.
\end{lemma}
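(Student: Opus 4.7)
The key simplification is that the half-space $\Hxi$ is defined purely by a condition on the first coordinate, so the proof reduces to a one-dimensional Gaussian tail estimate. I would begin by decomposing $\xi = \xi_1 e_1 + \zeta$ with $\zeta \in \eospace$. Since $\xi\sim\mathcal{N}(0,I_n)$, the coordinates $\xi_1\sim\mathcal{N}(0,1)$ and $\zeta\sim\mathcal{N}(0,I_{n-1})$ are independent under $\musn$. The condition $\langle e_1,\xi-\xistar\rangle\geq 0$ is simply $\xi_1\geq\|\xistar\|$, with no constraint on $\zeta$, so Fubini and the fact that the marginal in $\zeta$ integrates to one yield immediately
\begin{equation*}
\musn(\Hxi)
= (2\pi)^{-1/2}\int_{\|\xistar\|}^{\infty} e^{-s^2/2}\,ds,
\end{equation*}
which establishes the equality in \eqref{eq:PE-FORM Gauss xi}.

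For the second part I would bound the Gaussian tail by an integration by parts: writing $e^{-s^2/2}=\frac{1}{s}\cdot s\,e^{-s^2/2}$ and integrating once gives
\begin{equation*}
\int_{a}^{\infty} e^{-s^2/2}\,ds
= \frac{1}{a}e^{-a^2/2} - \int_{a}^{\infty}\frac{1}{s^2}e^{-s^2/2}\,ds
\leq \frac{1}{a}e^{-a^2/2},
\end{equation*}
applied with $a=\|\xistar\|>0$. This yields the pointwise inequality required by the notation $\lesssim$ defined in \eqref{eq:notation}.

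To verify the asymptotic equivalence part of $\lesssim$, I would apply L'H\^opital's rule to the ratio of $\int_{a}^{\infty} e^{-s^2/2}\,ds$ and $a^{-1}e^{-a^2/2}$: differentiating numerator and denominator with respect to $a$ gives $-e^{-a^2/2}$ and $-e^{-a^2/2}(1+a^{-2})$ respectively, whose ratio tends to $1$ as $a\to\infty$. Combining the upper bound with this limit gives the desired $\lesssim$ estimate in the sense of \eqref{eq:notation}, completing the proof.

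The proof is essentially routine; there is no real obstacle since the half-space has a trivial product structure with respect to the Gaussian measure. The only point requiring a little care is keeping the two components of the $\lesssim$ symbol straight, namely the pointwise inequality (from integration by parts) and the asymptotic equivalence (from L'H\^opital), both of which are standard Mills-ratio facts.
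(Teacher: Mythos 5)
Your proof is correct and takes essentially the same approach as the paper: both reduce $\musn(\Hxi)$ to the one-dimensional Gaussian tail via Fubini and orthogonal splitting, and then establish the Mills-ratio bound. The only (cosmetic) difference is in the tail estimate, where the paper shifts the integration variable and bounds $e^{-\|\xistar\|s-s^2/2}\le e^{-\|\xistar\|s}$, while you use integration by parts for the pointwise inequality and L'H\^opital for the asymptotic equivalence --- your treatment of the equivalence is, if anything, slightly more explicit than the paper's ``the quadratic term is dominated'' justification.
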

\begin{proof}
	For every $\xi\in\Hxi$, we can split $\xi$ into two parts:
	\begin{equation}
	\label{eq:PE-xi FORM}
	\xi=\xistar+s\eone+\eortho=(\|\xistar\|+s)\eone+\eortho, \qquad s>0,\qquad \eortho\in\eospace.
	\end{equation}
	Using the orthogonality of $\eone$ and $\eortho$, and the projection $\Proj$, we find
	\begin{equation}
	\label{eq:PE-xinorm}
	\xinorm^2=(\|\xistar\|+s)^2+\|\eortho\|^2=(\|\xistar\|+s)^2+\|\Proj(\eortho)\|^2_{\RR^{n-1}}.
	\end{equation}
	Applying Fubini's theorem, the measure of the half-space $\musn(\Hxi)$ becomes
	\begin{equation}
	\label{eq:PE-FORM Gauss compute}
	\begin{aligned}
	\musn(\Hxi)=& (2\pi)^{-n/2}\int_{\Hxi}e^{-\frac{1}{2}\|\xi\|^2}d\xi= (2\pi)^{-n/2}\int_{0}^{\infty} \int_{\Proj(\eospace)} e^{-\frac{1}{2}\left[ (\|\xistar\|+s)^2+\|\Proj(\eortho)\|^2_{\RR^{n-1}}\right] }d\Proj(\eortho) ds\\
	=&(2\pi)^{-n/2}\int_{0}^{\infty}e^{-\frac{1}{2}(\|\xistar\|+s)^2}ds\int_{\mathbb{R}^{n-1}}e^{-\frac{1}{2}\|\zeta\|^2_{\RR^{n-1}}}d\zeta
	=(2\pi)^{-1/2}\int_{0}^{\infty}e^{-\frac{1}{2}(\|\xistar\|+s)^2}ds\\
	=&(2\pi)^{-1/2}\int_{\|\xistar\|}^{\infty}e^{-\frac{1}{2}s^2}ds.
	\end{aligned}
	\end{equation}	
	This proves the equality in \eqref{eq:PE-FORM Gauss xi}. The
	asymptotic estimate follows from
	\begin{equation}
	\label{eq:PE-FORM Gauss approx}
	\begin{aligned}
	\musn(\Hxi)=&(2\pi)^{-1/2}\int_{0}^{\infty}e^{-\frac{1}{2}(\|\xistar\|+s)^2}ds=(2\pi)^{-1/2}e^{-\frac{1}{2}\|\xistar\|^2}\int_{0}^{\infty}e^{-\|\xistar\|s-\frac{1}{2}s^2}ds\\
	\lesssim&(2\pi)^{-1/2}e^{-\frac{1}{2}\|\xistar\|^2}\int_{0}^{\infty}e^{-\|\xistar\|s}ds=(2\pi)^{-1/2}}\dfrac{1}{\|\xistar\|}e^{-\frac{1}{2}\|\xistar\|^2 .
	\end{aligned}
	\end{equation}
	Here, we drop the term $-\frac{1}{2}s^2$ because it is dominated by $-\|\xistar\|s$ for large $\|\xistar\|$.
\end{proof}

For the Gaussian parameter \Gaussiancase, we apply the affine transformation \eqref{eq:PE-affine transfer} to \cref{lm:FO} to obtain the explicit form of $P^{FO}(z)$ defined in \eqref{eq:PE-FORM}.
\begin{theorem}[First-order approximation for general Gaussian distributions]\label{thm:FO}
	Assume given a Gaussian parameter \Gaussiancase~and the optimizer $\thetastar(z)$ of \eqref{eq:LDT-min}. Then, the first-order approximation $P^{FO}(z)$ defined in \eqref{eq:PE-FORM} can be computed as
	\begin{equation}
	\label{eq:PE-FORM Gauss}
	P^{FO}(z)=(2\pi)^{-1/2}\int_{\sqrt{2I(\theta^\star(z))}}^{\infty}e^{-\frac{1}{2}s^2}ds\lesssim(2\pi)^{-1/2}\dfrac{1}{\sqrt{2I(\theta^\star(z))}}e^{-I(\theta^\star(z))},
	\end{equation}
	where the asymptotic estimate $\lesssim$ is for $z\to\infty$.
\end{theorem}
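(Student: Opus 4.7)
The plan is to reduce the general Gaussian case to the standard normal case via the affine transformation introduced in \eqref{eq:PE-affine transfer}, then invoke \cref{lm:FO} and rewrite the result using the identity $I(\theta^\star) = \tfrac{1}{2}\|\xi^\star\|^2$ from \eqref{eq:PE-F xi}.

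First, I would recall that $\theta = A\xi + \theta_0$ with $A = C^{1/2} R$ pushes forward the standard normal measure $\mu^{SN}$ on $\RR^n$ to the Gaussian measure $\mu$, so $\mu(\mathcal{H}(z)) = \mu^{SN}(A^{-1}(\mathcal{H}(z) - \theta_0))$. The task is then to identify this pulled-back set with the half-space $\tilde{\mathcal{H}}_{\xi^\star}$ of \cref{lm:FO}. By \eqref{eq:PE-dF xi}, the chain rule gives $\langle \nabla_\theta F(\theta^\star), \theta - \theta^\star \rangle = \langle A^\top \nabla_\theta F(\theta^\star), \xi - \xi^\star \rangle = \langle \nabla_\xi \tilde{F}(\xi^\star), \xi - \xi^\star \rangle$. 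Combining the Euler--Lagrange equation \eqref{eq:PE-E-L xi} with $\nabla_\xi \tilde{I}(\xi^\star) = \xi^\star = \|\xi^\star\|e_1$ (by construction of $R$), the vector $\nabla_\xi \tilde{F}(\xi^\star)$ is a positive scalar multiple of $e_1$. Dividing by $\|\nabla_\theta F(\theta^\star)\|$ to get $\hat n^\star$ preserves the inequality, so the condition $\langle \hat n^\star(z), \theta - \theta^\star \rangle \ge 0$ defining $\mathcal{H}(z)$ transforms exactly to $\langle e_1, \xi - \xi^\star \rangle \ge 0$, i.e., to $\tilde{\mathcal{H}}_{\xi^\star}$.

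Next I would apply \cref{lm:FO} to obtain
\begin{equation*}
P^{FO}(z) = \mu^{SN}(\tilde{\mathcal{H}}_{\xi^\star}) = (2\pi)^{-1/2} \int_{\|\xi^\star\|}^\infty e^{-s^2/2}\,ds,
\end{equation*}
with the asymptotic estimate $\lesssim (2\pi)^{-1/2}\|\xi^\star\|^{-1} e^{-\|\xi^\star\|^2/2}$ as $\|\xi^\star\|\to\infty$. Finally, I would substitute $\|\xi^\star\| = \sqrt{2I(\theta^\star(z))}$, which follows directly from \eqref{eq:PE-F xi}: $\tilde{I}(\xi^\star) = I(\theta^\star) = \tfrac{1}{2}\|\xi^\star\|^2$. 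Since \Cref{th:as3} guarantees $I(\theta^\star(z)) \to \infty$ as $z \to \infty$, the asymptotic regime $\|\xi^\star\|\to\infty$ corresponds exactly to $z\to\infty$, completing \eqref{eq:PE-FORM Gauss}.

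I do not anticipate a serious obstacle here, since \cref{lm:FO} does the analytic work and the affine transformation framework is already set up in Section~\ref{sec:PFS}. The only mild subtlety is verifying that the normal direction $\hat n^\star$ pulls back to precisely the $e_1$-direction with the correct sign — this hinges on the fact that $R$ was chosen in \eqref{eq:PE-affine transfer} so that the first component of $R^\top C^{-1/2}(\theta^\star - \theta_0)$ is positive, combined with $\lambda > 0$ in the Euler--Lagrange equation. Once this alignment is observed, the rest is essentially a change of variables plus substitution.
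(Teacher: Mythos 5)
Your proposal is correct and follows essentially the same route as the paper: reduce to the standard normal via the affine transformation \eqref{eq:PE-affine transfer}, show via \eqref{eq:PE-dF xi} and the Euler--Lagrange equation that the half-space $\mathcal{H}(z)$ pulls back to $\Hxi$, invoke \cref{lm:FO}, and substitute $\xisnorm = \sqrt{2I(\thetastar)}$. Your explicit note on the sign of the $e_1$-alignment (via the choice of $R$ and $\lambda>0$) is a nice touch and matches what the paper encodes in \eqref{eq:PE-n xi}.
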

\begin{proof}
	Using the affine transformation \eqref{eq:PE-affine transfer} and
	\eqref{eq:PE-dF xi}, we obtain
	\begin{equation}
	\label{eq:PE-dxiF-xi}
	\begin{aligned}
	& \left\langle \nstar, \theta-\thetastar\right\rangle 
	= \left\langle\dthetaF{\thetastar}/\|\dthetaF{\thetastar}\|, \theta-\thetastar\right\rangle \\
	= &\left\langle A^{-\top}\dxiF{\xistar}/\|\dthetaF{\thetastar}\|, A\xi-A\xistar\right\rangle 
	=\frac{\|\dxiF{\xistar}\|}{\|\dthetaF{\thetastar}\|} \left\langle \eone, \xi-\xistar \right\rangle.
	\end{aligned}
	\end{equation}
	Thus, the affine transformation of the half-space $\mathcal{H}(z)$ becomes
	\begin{equation}
	\left\lbrace \xi: \frac{\|\dxiF{\xistar}\|}{\|\dthetaF{\thetastar}\|} \left\langle \eone, \xi-\xistar \right\rangle \geq 0\right\rbrace =\Hxi(z),
	\end{equation}
	i.e., the first-order approximation $P^{FO}(z)=\mu(\mathcal{H}(z))=\musn(\Hxi)$. Applying \cref{lm:FO} and \eqref{eq:PE-F xi} with $\xisnorm=\sqrt{2I(\thetastar)}$, we obtain
	\begin{equation}
	P^{FO}(z)=\musn(\Hxi)=(2\pi)^{-1/2}\int_{\sqrt{2I(\thetastar)}}^{\infty}e^{-\frac{1}{2}s^2}ds\lesssim(2\pi)^{-1/2}\dfrac{1}{\sqrt{2I(\thetastar)}}e^{-I(\thetastar)}.
	\end{equation}

\end{proof}

Note that the integral in \eqref{eq:PE-FORM Gauss} in \cref{thm:FO} is the CDF of the standard normal, which can be computed using the error function, i.e.,
\begin{equation}
\Phi(\alpha):=(2\pi)^{-1/2}\int_{-\alpha}^{\infty}e^{-\frac{1}{2}s^2}ds=\frac{1}{2}\left[1+\text{erf}\left( \frac{\alpha}{\sqrt{2}}\right) \right]\qquad \text{~for~} \alpha<0.
\end{equation}
The right estimate in \cref{thm:FO} also provides an
asymptotic approximation of $P^{FO}(z)$, which suggests 
that 
the prefactor is
$C_0(z)=(2\pi)^{-1/2}/\sqrt{2I(\theta^\star(z))}$. However, the error of this prefactor is not controllable, the asymptotic estimation of the probability we should use is the second-order approximation \eqref{eq:PE-SORM Gauss}, as discussed in \cref{sec:SORM}.

\section*{Acknowledgments}
We appreciate helpful discussions with Randall LeVeque, Marsha Berger,
Jonathan Weare, Gregor Gassner and Stefan Ulbrich. We would like to
thank the anonymous referees for their thoughtful comments and
suggestions that helped us improve our paper. We also thank Elisabeth Ullmann and Jules Pertinand for discussions on the additional assumptions required for \cref{lm:SO,thm:SO}.

\bibliographystyle{siamplain}
\bibliography{Methodology}

\end{document}